\DeclareMathAlphabet{\mathpzc}{OT1}{pzc}{m}{it}
\newtheorem{theorem}{Theorem}[section]
\newtheorem{lemma}[theorem]{Lemma}
\newtheorem{proposition}[theorem]{Proposition}
\newtheorem{corollary}[theorem]{Corollary}
\newtheorem{fact}[theorem]{Fact}
\newtheorem{claim}[theorem]{Claim}
\theoremstyle{definition}
\newtheorem{definition}[theorem]{Definition}
\theoremstyle{remark}
\newtheorem{remark}{Remark}
\newtheorem{question}{Question}
\def\hook{\upharpoonright}
\def\forces{\Vdash}
\def\Me{\mathcal M}
\def\Null{\mathcal N}
\def\ZFC{\mathsf{ZFC}}
\def\Kb{\mathcal K}
\def\baire{\omega^\omega}
\def\bbb{(\omega^\omega)^{\omega^\omega}}
\def\mfc{\mathfrak{c}}
\def\mfb{\mathfrak b}
\def \mfd{\mathfrak{d}}
\def\GCH {\mathsf{GCH}}
\def\CH {\mathsf{CH}}
\title{Higher Dimensional Cardinal Characteristics for Sets of Functions}
\author[Switzer]{Corey Bacal Switzer}
\address[C.~B.~Switzer]{Institut f\"{u}r Mathematik, Kurt G\"odel Research Center, Universit\"{a}t Wien, Kolingasse 14-16, 1090 Wien, AUSTRIA}
\email{corey.bacal.switzer@univie.ac.at}
\thanks{\emph{Acknowledgements:} The author would like to thank the
Austrian Science Fund (FWF) for the generous support through grant number Y1012-N35.}
\subjclass[2000]{03E17, 03E35, 03E15, 03E05}
\date{}
\keywords{Cardinal characteristics; forcing; baire space}
\date{}
\begin{document}
\maketitle

\begin{abstract}
Much recent work in cardinal characteristics has focused on generalizing results about $\omega$ to uncountable cardinals by studying analogues of classical cardinal characteristics on the generalized Baire and Cantor spaces $\kappa^\kappa$ and $2^\kappa$. In this note I look at generalizations to other function spaces, focusing particularly on the space of functions $f:\omega^\omega \to \omega^\omega$. By considering classical cardinal invariants on Baire space in this setting I derive a number of \say{higher dimensional} analogues of such cardinals, ultimately introducing 18 new cardinal invariants, alongside a framework that allows for numerous others. These 18 form two separate diagrams consisting of 6 and 12 cardinals respectively, each resembling versions of the Cicho\'n diagram. These $\ZFC$-inequalities are the first main result of the paper. I then consider other relations between these cardinals, as well as the cardinal $\mfc^+$ and show that these results rely on additional assumptions about cardinal characteristics on $\omega$. Finally, using variations of Cohen, Hechler, and localization forcing I prove a number of consistency results for possible values of the new cardinals.
\end{abstract}

\section{Introduction}
Many cardinal characteristics on $\omega^\omega$ arise as follows: fix some relation $R \subseteq \omega \times \omega$ and let $R^* \subseteq \omega^\omega \times \omega^\omega$ be defined by $fR^*g$ if and only if for all but finitely many $n$ $f(n) R g(n)$. For instance, letting $R$ be the the usual order on $\omega$ gives the eventual domination ordering. Each such $R$ then gives rise to two cardinal characteristics, $\mathfrak{b}(R^*)$, the least size of a set $A \subseteq \omega^\omega$ with no $R^*$-bound and $\mathfrak{d}(R^*)$ the least size of a set $D \subseteq \omega^\omega$ which is $R^*$-dominating. A natural generalization of this as is follows: fix two sets $X$ and $Y$, let $\mathcal I$ be an ideal on $X$ and $R \subseteq Y \times Y$ be a binary relation on $Y$. Let $Y^X$ be the set of functions $f:X \to Y$ and consider the relation $R_\mathcal I \subseteq Y^X \times Y^X$ given by $f R_\mathcal I g$ if and only if for almost all $x$ we have $f(x) R g(x)$ i.e. $\{x \in X \; | \; \neg f(x) R g(x)\} \in \mathcal I$. Again we get two cardinal characteristics, this time on the set $Y^X$: $\mathfrak{b}(R_\mathcal I)$, the least size of a set $A \subseteq Y^X$ which has no $R_\mathcal I$-bound and $\mathfrak{d}(R_\mathcal I)$, the least size of a set $D \subseteq Y^X$ which is dominating with respect to $R_\mathcal I$. Note that letting $X=Y = \omega$ and $\mathcal I$ be the ideal of finite sets we recover the original setting for cardinal characteristics on Baire space and letting $Y = 2$ we recover the same for Cantor space.

Recently, much work has been done on the case of $X = \kappa$ and $Y = \kappa$ or $2$ for arbitrary $\kappa$, thus generalizing cardinal characteristics to larger cardinals, see for example the article \cite{brendle16} or the survey \cite{questionsbaire} for a list of open questions. In this case the interesting ideals are the ideal of sets of size $< \kappa$, the non-stationary ideal, and, if $\kappa$ has a large cardinal property, then potentially some ideal related to this. See \cite{CuSh95}, Theorems 6 and 8 for a particularly striking result relating cardinal invariants modulo different ideals. 

However, this framework is more flexible than just allowing one to study generalized Baire space and Cantor space. Indeed it is easy to imagine numerous new cardinal characteristics. In this article I consider a different generalization, based on the function space $(\omega^\omega)^{\omega^\omega}$ of functions $f:\omega^\omega \to \omega^\omega$. Since Baire space comes with ideals that are not easily defined on $\kappa^\kappa$ we get further generalizations of cardinal characteristics. Specifically I will consider the ideals $\mathcal N$, $\mathcal M$ and $\mathcal K$ of null, meager and (contained in) $\sigma$-compact sets respectively. The result is a \say{higher dimensional} version of several well-known cardinal characteristics. While many different generalizations are possible let me stick with the following three relations for simplicity: $\leq^*$, the relation of eventual domination, $\neq^*$, the relation of eventual non-equality and $\in^*$ the relation of eventual capture (these relations will be defined below). By considering two cardinals for each of these three relations and three ideals I end up with 18 new cardinals characteristics above the continuum. The first main theorem of this article is to show that these \say{higher dimensional} cardinals behave, provably under $\ZFC$, similar to their Baire space analogues (the cardinals mentioned below will be defined in detail in the next section).

\begin{theorem}
Interpreting $\to$ as $\leq$ the inequalities shown in Figures 1 and 2 are all provable in $\ZFC$.

\begin{figure}[h]
\centering
  \begin{tikzpicture}[scale=1.5,xscale=2]
     \draw 
           (1,0) node (Bin*) {$\mfb(\in_\Null^*)$}
           (1,1) node (Bleq*) {$\mfb(\leq_\Null^*)$}
           (1,2) node (Bneq*) {$\mfb(\neq_\Null^*)$}
           (2,0) node (Dneq*) {$\mfd(\neq_\Null^*)$}
           (2,1) node (Dleq*) {$\mfd(\leq_\Null^*)$}
           (2,2) node (Din*) {$\mfd(\in_\Null^*)$}
           ;
     \draw[->,>=stealth]
            (Bin*) edge (Bleq*)
            (Bleq*) edge (Bneq*)
            (Bin*) edge (Dneq*)
            (Bleq*) edge (Dleq*)
            (Bneq*) edge (Din*)
            (Dneq*) edge (Dleq*)
            (Dleq*) edge (Din*)
;      
  \end{tikzpicture}
\caption{Higher Dimensional Cardinal Characteristics Mod the Null Ideal}
\end{figure}

\begin{figure}[h]
\centering
  \begin{tikzpicture}[scale=1.5,xscale=2]
     \draw 
           (1,0) node (Bin*) {$\mfb(\in_\Me^*)$}
           (1,1) node (Bleq*) {$\mfb(\leq_\Me^*)$}
           (1,2) node (Bneq*) {$\mfb(\neq_\Me^*)$}
           (2,0) node (Dneq*) {$\mfd(\neq_\Me^*)$}
           (2,1) node (Dleq*) {$\mfd(\leq_\Me^*)$}
           (2,2) node (Din*) {$\mfd(\in_\Me^*)$}
	     (0,0) node (BinK*) {$\mfb(\in_\Kb^*)$}
           (0,1) node (BleqK*) {$\mfb(\leq_\Kb^*)$}
           (0,2) node (BneqK*) {$\mfb(\neq_\Kb^*)$}
           (3,0) node (DneqK*) {$\mfd(\neq_\Kb^*)$}
           (3,1) node (DleqK*) {$\mfd(\leq_\Kb^*)$}
           (3,2) node (DinK*) {$\mfd(\in_\Kb^*)$}
           ;
     \draw[->,>=stealth]
            (Bin*) edge (Bleq*)
            (Bleq*) edge (Bneq*)
            (Bin*) edge (Dneq*)
            (Bleq*) edge (Dleq*)
            (Bneq*) edge (Din*)
            (Dneq*) edge (Dleq*)
            (Dleq*) edge (Din*)
            (Dleq*) edge (DleqK*)
            (Dneq*) edge (DneqK*)
            (Din*) edge (DinK*)
            (BleqK*) edge (Bleq*)
            (BneqK*) edge (Bneq*)
            (BinK*) edge (Bin*)
            (BinK*) edge (BleqK*)
            (BleqK*) edge (BneqK*)
            (DneqK*) edge (DleqK*)
	      (DleqK*) edge (DinK*)

;
 \end{tikzpicture}
\caption{Higher Dimensional Cardinal Characteristics Mod the Meager and $\sigma$-Compact Ideals}
\end{figure}

\end{theorem}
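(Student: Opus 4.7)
The plan is to lift each inequality in Figures 1 and 2 from the corresponding inequality in the classical Cicho\'n diagram on $\baire$. Under the standard identifications $\mfb(\in^*) = \mathrm{add}(\Null)$, $\mfd(\in^*) = \mathrm{cof}(\Null)$, $\mfb(\neq^*) = \mathrm{non}(\Me)$, $\mfd(\neq^*) = \mathrm{cov}(\Me)$, $\mfb(\leq^*) = \mfb$, and $\mfd(\leq^*) = \mfd$, every arrow in Figure 1 is already a $\ZFC$-theorem on $\baire$. What makes the lift to $(\baire)^{\baire}$ routine in most cases is that each classical proof is effected by a \emph{pointwise} map $\Phi$ converting an $R$-bound (or an $R$-dominating family) into an $R'$-bound (or family). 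Applying such a $\Phi$ fiberwise, i.e.\ sending $g : \baire \to \baire$ to $\tilde\Phi(g)(x) := \Phi(g(x))$, one obtains a map converting $R_{\mathcal I}^*$-bounds to $(R')_{\mathcal I}^*$-bounds, since the exceptional set $\{x : \neg\, f(x)\, R'\, \tilde\Phi(g)(x)\}$ is contained in $\{x : \neg\, f(x)\, R\, g(x)\} \in \mathcal I$.

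Concretely, for $\mfb(\leq_{\mathcal I}^*) \leq \mfb(\neq_{\mathcal I}^*)$ and $\mfd(\neq_{\mathcal I}^*) \leq \mfd(\leq_{\mathcal I}^*)$ I would take the shift $\Phi(h)(n) := h(n)+1$; for $\mfb(\in_{\mathcal I}^*) \leq \mfb(\leq_{\mathcal I}^*)$ and $\mfd(\leq_{\mathcal I}^*) \leq \mfd(\in_{\mathcal I}^*)$ the relevant $\Phi$ is the slalom enclosure $h \mapsto (n \mapsto \{0,1,\ldots,h(n)\})$ and, in the opposite direction, $\varphi \mapsto (n \mapsto \max \varphi(n))$. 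The middle-row inequalities $\mfb(R_{\mathcal I}^*) \leq \mfd(R_{\mathcal I}^*)$ follow from the standard diagonal argument, using $\sigma$-completeness of $\mathcal I$ so that $\leq_{\mathcal I}^*$ is a $\sigma$-directed preorder. The main obstacle is the pair of cross inequalities $\mfb(\in_{\mathcal I}^*) \leq \mfd(\neq_{\mathcal I}^*)$ and $\mfb(\neq_{\mathcal I}^*) \leq \mfd(\in_{\mathcal I}^*)$, corresponding to Bartoszy\'nski's theorems $\mathrm{add}(\Null) \leq \mathrm{cov}(\Me)$ and $\mathrm{non}(\Me) \leq \mathrm{cof}(\Null)$; here the classical argument invokes a more global coding of meager sets by slaloms. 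I would verify that this coding can be carried out fiberwise so that the resulting exceptional sets (in the variable $x$) lie in $\mathcal I$; this should succeed since the argument needs only countably many coordinates per fiber and each of $\Null, \Me, \Kb$ is $\sigma$-complete.

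For Figure 2, the extra arrows between the $\Kb$-block and the $\Me$-block follow from the ideal inclusion $\Kb \subseteq \Me$: since $\baire$ is not locally compact, every compact subset is nowhere dense, so every $\sigma$-compact set is meager. Hence $R_\Kb \subseteq R_\Me$ for each of the three relations, immediately yielding $\mfb(R_\Kb^*) \leq \mfb(R_\Me^*)$ and $\mfd(R_\Me^*) \leq \mfd(R_\Kb^*)$. The arrows internal to the $\Kb$-block are proved verbatim as in the preceding paragraph with $\Kb$ replacing $\Me$ or $\Null$, noting that $\Kb$ is $\sigma$-complete with uncountable additivity (the latter because $\baire$ itself is not $\sigma$-compact, by the Baire category theorem).
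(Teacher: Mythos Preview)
Your outline is correct and matches the paper's approach: fiberwise pointwise transformations for the easy arrows, the inclusion $\Kb \subseteq \Me$ for the cross-ideal arrows in Figure~2, and a fiberwise lift of Bartoszy\'nski's combinatorics for the two nontrivial inequalities $\mfb(\in^*_\mathcal I) \leq \mfd(\neq^*_\mathcal I)$ and $\mfb(\neq^*_\mathcal I) \leq \mfd(\in^*_\mathcal I)$. The paper carries out this last step explicitly by fixing a partition $\{J_{n,k}: k<n\}$ of $\omega$, passing to $\mathcal J$-slaloms, and proving that from each $\mathcal J$-slalom $s$ one can read off a real $x_s$ with the property that $y' \in^* s$ implies $\exists^\infty n\; y(n)=x_s(n)$; applying $s \mapsto x_s$ fiberwise, the exceptional set in the variable $x$ is contained in the original one, so membership in $\mathcal I$ is preserved. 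Your sketch ``I would verify that this coding can be carried out fiberwise'' is exactly right, but this is where all the work is, so you should record the explicit map.

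Two small corrections. First, your ``slalom enclosure'' $h \mapsto (n \mapsto \{0,1,\ldots,h(n)\})$ is not a slalom, since the $n$th coordinate may have more than $n$ elements; fortunately you do not need it, as the $\max$ direction alone gives both $\mfb(\in^*_\mathcal I) \leq \mfb(\leq^*_\mathcal I)$ and $\mfd(\leq^*_\mathcal I) \leq \mfd(\in^*_\mathcal I)$. Second, the inequality $\mfb(\leq^*_\mathcal I) \leq \mfd(\leq^*_\mathcal I)$ needs no diagonal argument or $\sigma$-directedness: if $g$ bounds $A$ then $g+1$ is dominated by no member of $A$, so bounded families are not dominating.
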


\pagebreak

By contrast, the relation between these cardinals and the size of the continuum is less clear. To show that these cardinals are at least $\mfc^+$, I need to assume certain values of cardinal characteristics. 
\begin{theorem}
Assume that $add (\Null) = \mfc$, then all of the cardinals in Figures 1 and 2 are at least $\mfc^+$.
\end{theorem}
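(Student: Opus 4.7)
The plan is to reduce all eighteen inequalities to proving $\mfb(\in_{\mathcal I}^*)\ge\mfc^+$ for each $\mathcal I\in\{\Null,\Me,\Kb\}$. Inspecting Figures 1 and 2, one sees that $\mfb(\in_\Null^*)$ is the unique source vertex of Figure 1, and $\mfb(\in_\Kb^*)$ is a source of Figure 2 from which every other vertex is reachable along the arrows (for example, $\mfb(\in_\Kb^*)\to\mfb(\in_\Me^*)\to\mfd(\neq_\Me^*)\to\mfd(\neq_\Kb^*)\to\mfd(\leq_\Kb^*)\to\mfd(\in_\Kb^*)$). Hence bounding the three $\mfb(\in_{\mathcal I}^*)$ from below by $\mfc^+$ suffices.

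The construction rests on two standard consequences of $\mathrm{add}(\Null)=\mfc$. First, by the Cicho\'n inequalities $\mathrm{add}(\Null)\le\mathrm{add}(\Me)$ and $\mathrm{add}(\Null)\le\mfb=\mathrm{add}(\Kb)$, all three ideals have additivity $\mfc$; since each contains every singleton of $\baire$, this forces each to contain \emph{every} subset of $\baire$ of cardinality strictly less than $\mfc$. Second, by Bartoszy\'nski's theorem, $\mfb(\in^*)=\mathrm{add}(\Null)=\mfc$, so any family of fewer than $\mfc$ reals is localized by a single slalom.

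For the main step I would fix an arbitrary family $F=\{f_\xi:\xi<\mfc\}\subseteq(\baire)^{\baire}$ and an enumeration $\baire=\{x_\xi:\xi<\mfc\}$, valid because $\mfc=\mathrm{add}(\Null)$ is automatically regular. Define $g\colon\baire\to\baire$ by letting $g(x_\xi)$ be a slalom (coded as an element of $\baire$) that localizes the set $\{f_\alpha(x_\xi):\alpha\le\xi\}$; since this set has size $<\mfc$, such a slalom exists by the second observation above. Then for each $\alpha<\mfc$, the ``bad set'' $\{x\in\baire:f_\alpha(x)\notin^* g(x)\}$ is contained in $\{x_\xi:\xi<\alpha\}$, which has cardinality strictly less than $\mfc$ and so lies in $\mathcal I$ by the first observation. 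Thus $f_\alpha\in_{\mathcal I}^* g$ for every $\alpha$, so $F$ is $\in_{\mathcal I}^*$-bounded, proving $\mfb(\in_{\mathcal I}^*)>\mfc$ for each of the three ideals simultaneously. There is no genuine obstacle: the proof is a soft bookkeeping argument that exploits the full strength of $\mathrm{add}(\Null)=\mfc$ to close up under unions of cardinality $<\mfc$, and the only point meriting care is verifying the diagram-chasing in the first paragraph.
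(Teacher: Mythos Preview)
Your proposal is correct and uses essentially the same diagonal argument as the paper: enumerate $\baire$ in type $\mfc$, at stage $\xi$ pick a bound on $\{f_\alpha(x_\xi):\alpha\le\xi\}$, and observe that the bad set for each $f_\alpha$ has size $<\mfc$ and hence lies in $\mathcal I$. The only organizational difference is that you first reduce via the diagrams to the single case $R=\in^*$ and invoke $\mfb(\in^*)=\mathrm{add}(\Null)=\mfc$ once, whereas the paper states the slightly more general lemma ``$\mfb(R)=\mathrm{non}(\mathcal I)=\mfc$ implies $\mfb(R_{\mathcal I})\ge\mfc^+$'' uniformly for each pair $(R,\mathcal I)$ and then specializes; the underlying construction is identical.
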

Similarly, cardinal characteristics on $\omega$ play a role in relating the cardinals modulo the different ideals. Specifically, assuming all the cardinals in Cicho\'n's diagram are equal, I prove that the cardinals obtained by modding out by the null and meager ideals respectively are equal. This result is thematically similar to the aforementioned theorems from \cite{CuSh95} relating $[\kappa]^{< \kappa}$ to $\mathsf{NS}_\kappa$.
\begin{theorem}
Assume $cof(\Null) = add (\Null)$. Then for each $R \in \{\in^*, \leq^*, \neq^*\}$ we have that $\mfb (R_\Null) = \mfb (R_\Me)$ and $\mfd (R_\Null) = \mfd (R_\Me)$.
\end{theorem}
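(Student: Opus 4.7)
The starting observation is that the hypothesis $\mathrm{cof}(\Null) = \mathrm{add}(\Null)$ triggers a total collapse of Cicho\'n's diagram: combined with Bartoszy\'nski's inequalities $\mathrm{add}(\Null) \leq \mathrm{add}(\Me)$ and $\mathrm{cof}(\Me) \leq \mathrm{cof}(\Null)$ and the standard diagram inequalities, every Cicho\'n cardinal collapses to a single value $\kappa$. In particular $\mathrm{add}(\Me) = \mathrm{cof}(\Me) = \kappa$, so both $\Null$ and $\Me$ are $\kappa$-additive ideals of cofinality $\kappa$, and both admit $\subseteq$-increasing cofinal chains $\langle N_\alpha : \alpha < \kappa\rangle$ and $\langle M_\alpha : \alpha < \kappa\rangle$ exhausting $\baire$ (by $\mathrm{cov}(\Null) = \mathrm{cov}(\Me) = \kappa$).

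The plan is to prove the stronger biconditional: a family $F \subseteq \bbb$ has an $R_\Null$-bound iff it has an $R_\Me$-bound, and symmetrically for dominating families. This yields the equalities $\mfb(R_\Null) = \mfb(R_\Me)$ and $\mfd(R_\Null) = \mfd(R_\Me)$ uniformly in $R \in \{\in^*, \leq^*, \neq^*\}$. For the forward direction, I would start with an $R_\Null$-bound $g$ for $F = \{f_\alpha : \alpha < \lambda\}$, with null bad sets $B_\alpha = \{x : \neg\, f_\alpha(x)\, R\, g(x)\}$, and modify $g$ on a correction set into an $R_\Me$-bound $g'$. The modification takes the shape $g'(x) = g(x)$ off the correction set and $g'(x) = h$ on it, where $h \in \baire$ is a universal $R$-large value; such an $h$ exists for each of the three relations (a dominating function for $\leq^*$, a function differing from a preassigned dense set of values for $\neq^*$, and a universal slalom for $\in^*$).

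The main obstacle is designing the correction set so that each modified bad set is meager. Since null sets are not in general meager, the direct cover-and-replace strategy fails, and I would instead exploit the Tukey equivalence $\Null \equiv_T \kappa \equiv_T \Me$ forced by the hypothesis: under the collapse, both ideals are Tukey equivalent as directed sets to the well-ordered cardinal $\kappa$, giving a correspondence between their cofinal chains that respects $\kappa$-unions. Concretely, I expect to proceed by induction along the $\kappa$-chain in $\Me$: at stage $\alpha$, handle the bad sets $B_\beta$ whose $\Null$-level index is $\leq \alpha$ by adjoining a meager correction $M_\alpha$, amalgamating with previously built corrections via $\mathrm{add}(\Me) = \kappa$ and using $\mathrm{cov}(\Me) = \mathrm{non}(\Null) = \kappa$ to guarantee that a suitable $M_\alpha$ can be chosen outside previously processed null level sets. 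The $\kappa$-directedness of the $\Me$-chain lets the construction absorb $\lambda$-many bad sets regardless of whether $\lambda < \kappa$ or $\lambda \geq \kappa$. The backward direction and the dominating case then follow by the symmetric argument with the roles of $\Null$ and $\Me$ interchanged.
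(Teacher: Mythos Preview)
Your proposal has a genuine gap at its core. The ``universal $R$-large value $h$'' you invoke does not exist: there is no single $h \in \baire$ that $\leq^*$-dominates every real, no single $h$ eventually different from every real, and no single slalom capturing every real. Even if you intend $h = h_x$ to vary with $x$, you would need $h_x$ to $R$-bound $\{f_\alpha(x) : \alpha < \lambda\}$, which requires $\lambda < \mfb(R)$; but under the hypothesis every Cicho\'n cardinal equals a fixed $\kappa$, while $\lambda$ ranges below $\mfb(R_\Null)$, which may well exceed $\kappa$. More fundamentally, your correction-set strategy cannot succeed as described: if the correction set $C$ is meager, then outside $C$ the bad set $B_\alpha$ is unchanged, and a null set minus a meager set need not be meager (a null set can even be comeager); if $C$ is non-meager, then on $C$ you must produce values that $R$-bound all $f_\alpha(x)$ simultaneously, which is exactly the obstruction just named. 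The inductive Tukey sketch does not resolve this, since Tukey equivalence of the ideals as directed orders says nothing about converting an individual null set into a meager one.

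The paper avoids this entirely by invoking the Erd\H{o}s--Sierpi\'nski duality: under $\mathrm{add}(\Null) = \mathrm{cof}(\Null)$ there is a bijection $\phi : \baire \to \baire$ with $A \in \Me \iff \phi(A) \in \Null$ (and vice versa). One then composes on the domain: given $A \subseteq (\baire)^{\baire}$, set $A_\phi = \{g \circ \phi : g \in A\}$; an $R_\Null$-bound $\bar g$ on $A_\phi$ yields the $R_\Me$-bound $\bar g \circ \phi^{-1}$ on $A$, since the new bad set for $g$ is exactly $\phi$ applied to the old (null) bad set for $g \circ \phi$, hence meager. This is uniform in $R$, symmetric in the two ideals, and dualizes immediately for the $\mfd$ case. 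The key point you were missing is that the transformation should act on the \emph{argument} of the functions via a set-theoretic bijection that swaps the ideals, rather than on the \emph{values} via a putative universal bound.
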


Next I turn to consistency results and show the following.
\begin{theorem}
Let $\aleph_2 \leq \kappa \leq \lambda$ with $\kappa$ and $\lambda$ regular and $\mathcal I \in  \{\Null, \Me, \Kb\}$. Each of the following inequalities are consistent.
\begin{enumerate}
\item
$\mfc^+ = \mfb (\neq^*_\mathcal I) < \mfd (\neq^*_\mathcal I) = \kappa = 2^\mfc$.
\item
$\mfc^+ < \mfb (\leq^*_\mathcal I) = \mfd (\neq^*_\mathcal I) = \kappa = 2^\mfc$.
\item
$\mfc^+ < \mfb (\leq^*_\mathcal I) = \kappa < \mfd (\leq^*_\mathcal  I) = \lambda = 2^\mfc$.
\item
$\mfc^+ < \mfb (\in^*_\mathcal I) = \kappa = 2^\mfc$.
\end{enumerate}
\end{theorem}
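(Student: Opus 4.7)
The plan is to prove the four parts by adapting four classical forcing templates --- Cohen (for (1)), Hechler (for (2) and (3)), and localization (for (4)) --- to the higher-dimensional setting on $(\baire)^{\baire}$, working throughout over a ground model of $\GCH$ (so that $\mfc = \aleph_1$ and $2^\mfc = \aleph_2$ initially) and then performing an iteration or product of length $\kappa$ (or of lengths $\kappa, \lambda$ in case (3)) of an appropriate higher-dimensional variant. The common strategy is that each forcing generically adds a function $f : \baire \to \baire$ that witnesses the relation $R_\mathcal{I}$ against every ground-model function, so a sufficiently long iteration or product yields the desired upper bound on $\mfd(R_\mathcal{I})$ or lower bound on $\mfb(R_\mathcal{I})$.

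For (1), I would define a higher-dimensional Cohen-like forcing whose conditions are countable partial functions from $\baire$ to $\baire$, arranged so that the generic $f$ satisfies $\{x : f(x) =^\infty g(x)\} \notin \mathcal{I}$ for every ground-model $g$. A $\kappa$-product then supplies a $\neq^*_\mathcal{I}$-dominating family of size $\kappa$, while a genericity/name-counting argument shows the ground-model functions remain a $\neq^*_\mathcal{I}$-unbounded family of size $\mfc^+$. For (2), a $\kappa$-iteration of a higher-dimensional Hechler forcing --- whose conditions are finite approximations of $f$ together with a growth-promise forcing the generic to $\leq^*_\mathcal{I}$-dominate any ground-model function --- yields $\mfb(\leq^*_\mathcal{I}) = \kappa$; to simultaneously pin $\mfd(\neq^*_\mathcal{I})$ at $\kappa$, one either exploits the fact that the Hechler generic is Cohen-like modulo $\mathcal{I}$ (as in the classical Hechler model) or else interleaves the Cohen-like forcings from (1) into the iteration. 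For (3), I would apply the classical bookkeeping trick of iterating a $\lambda$-length Hechler-like forcing but adding $\leq^*_\mathcal{I}$-dominators only along a cofinal chain of order type $\kappa$, giving $\mfb(\leq^*_\mathcal{I}) = \kappa$ and $\mfd(\leq^*_\mathcal{I}) = \lambda$. For (4), a $\kappa$-iteration of a higher-dimensional localization forcing, whose conditions are countable approximations of a slalom $S : \baire \to [\baire]^{<\omega}$ that $\in^*_\mathcal{I}$-catches the generic, forces $\mfb(\in^*_\mathcal{I}) = \kappa$.

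The principal obstacle will be verifying the preservation properties of these higher-dimensional forcings. Each must satisfy a suitable chain condition (or a higher-cardinal properness-style property) to avoid collapsing cardinals and to pin down $2^\mfc$ at the right value, and one must check that the generic genuinely behaves generically with respect to each of the three ideals $\Null, \Me, \Kb$. The case $\mathcal{I} = \Me$ is likely the most transparent, since the genericity of the forcing supplies the desired comeager agreement almost by construction, while $\mathcal{I} = \Null$ and especially $\mathcal{I} = \Kb$ will require additional combinatorial arguments to guarantee that the generic agrees with (resp.\ dominates) any ground-model $g$ on a positive-measure (resp.\ non-$\sigma$-compact) set of $x \in \baire$. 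A second subtlety lies in (3), where the bookkeeping must be arranged so that $\mfb(\leq^*_\mathcal{I})$ is not inadvertently pushed past $\kappa$ as $\mfd(\leq^*_\mathcal{I})$ is driven up to $\lambda$.
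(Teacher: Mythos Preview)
Your overall strategy---adapt Cohen, Hechler, and localization forcing to the higher-dimensional setting---matches the paper's, but several of your concrete choices diverge and some would create real problems.

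The paper's forcings all have as stem a \emph{Borel partial function with domain in $\mathcal I$} (not a finite or merely countable approximation). Under $\CH$ this makes each forcing $\sigma$-closed of size $\mfc$, hence $\mfc^+$-c.c., so cardinal preservation is automatic and no properness machinery is needed. Your ``finite approximations'' in (2) would destroy $\sigma$-closure, add reals, and wreck the $\CH$-based analysis. For the Hechler and localization variants the side part is a \emph{countable set} of functions, and each new point added to the stem must $\leq^*$-dominate (resp.\ $\in^*$-capture) all side-part functions evaluated at that input. Also, your slalom in (4) should map $\baire$ into $\mathcal S$ (slaloms on $\omega$), not into $[\baire]^{<\omega}$; and your worry that $\Null$ and $\Kb$ need extra work is misplaced---the paper only uses that $\mathcal I$ has a Borel base and contains all countable sets, so the same proof works verbatim for all three ideals.

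For (3) the paper does \emph{not} use a linear iteration with a sparse cofinal chain of dominators. It follows the Cummings--Shelah template: fix a well-founded poset $\mathbb Q$ with $\mfb(\mathbb Q)=\kappa$ and $\mfd(\mathbb Q)=\lambda$, and define a non-linear iteration $\mathbb D(\mathbb Q)$ along $\mathbb Q$ so that $a\mapsto f_G^a$ is a cofinal embedding of $\mathbb Q$ into $((\baire)^{\baire},\leq^*_{\mathcal I})$; the values of $\mfb$ and $\mfd$ then transfer from $\mathbb Q$. Your linear bookkeeping sketch leaves unexplained how $\mfb(\leq^*_{\mathcal I})$ is kept from exceeding $\kappa$. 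Finally, for (2) the paper gets $\mfd(\neq^*_{\mathcal I})=\kappa$ not from Cohen-like behaviour of the Hechler generic but from the general fact that a countable-support iteration adds an $\mathrm{Add}(\omega_1,1)$-generic at limits of cofinality $\omega_1$; reinterpreted as a function $\baire\to\baire$, this generic witnesses that no smaller family is $\neq^*_{\mathcal I}$-dominating.
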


Many other consistency results seem within grasp using standard techniques or minor modifications and I list some open problems related to these at the end of the paper.

In the rest of the introduction I introduce terminology that will be used throughout. In the next section the cardinals $\mathfrak{b}(R_\mathcal I)$ and $\mathfrak{d}(R_\mathcal I)$ are introduced and basic relations between them are shown. The third section investigates the relation between these higher dimensional cardinal characteristics and the standard cardinal characteristics on $\omega$. Section 4 contains a number of consistency results and introduces three new forcing notions based on generalizations of Cohen, Hechler, and localization forcing. In section 5 I list a number of open questions, as well as some possible extensions. 

In what follows I use letters like $x, y, z$ to denote elements of Baire space and letters like $f, g, h$ to denote functions from $\omega^\omega$ to $\omega^\omega$ (or, between uncountable Polish spaces more generally). If $\mathcal I$ is an ideal, a set is $\mathcal I$-positive if it's not in $\mathcal I$ and is $\mathcal I$-measure one if its complement is in $\mathcal I$. Note that in the case of the null ideal, non measurable sets are $\mathcal I$-positive. Otherwise the notation is standard conforming to that \cite{KenST}. Also, I use the monograph \cite{BarJu95} as the standard reference for cardinal characteristics of the continuum and occasionally refer to the survey article \cite{BlassHB} as well.

Let me recall the following vocabulary. Given any set $X$ and a relation $R$ on $X$ an element $x \in X$ is an $R$-{\em bound} for a set $A \subseteq X$ if for every $a \in A$ we have that $a R x$. A set is $R$-{\em bounded} if it has an $R$-bound. It's $R$-{\em unbounded} otherwise. A set $D \subseteq X$ is $R$-{\em dominating} if for every $y \in X$ there is a $d \in D$ so that $y R d$. For any such $X$ and $R$ I write $\mfb (R)$ for the least size of an $R$-unbounded set and $\mfd(R)$ for the least size of an $R$-dominating set. If $\mathbb{Q} = (Q, \leq_\mathbb Q)$ is a partially ordered set then I also write $\mfb (\mathbb Q)$ and $\mfd (\mathbb Q)$ for $\mfb (\leq_\mathbb Q)$ and $\mfd(\leq_\mathbb Q)$ respectively.

I let $\mu$ denote the Lebesgue measure on $\omega^\omega$ (or any other oft-encountered Polish space under consideration). Recall that this is defined as follows. For a finite sequence $s\in \omega^{<\omega}$ let $N_s = \{ x \in \baire \; | \; s \subseteq x\}$ be the basic open determined by $s$ and let $\mu(N_s) := \Pi_{i < {\rm length(s)}} 2^{-s(i) + 1}$. The measure $\mu$ is then extended to all measurable sets in the normal way.

The symbols $\Null$, $\Me$, $\Kb$ denote the null ideal, the meager ideal and the ideal generated by $\sigma$-compact subsets of $\baire$ respectively. In the case of $\Kb$ recall that it is a well known result that $A \in \Kb$ if and only if $A$ is $\leq^*$-bounded, see the proof of Theorem 2.8 of \cite{BlassHB}. The relevant properties that all three of these ideals share is that they are non-trivial $\sigma$-ideals containing all countable subsets of $\baire$ and have a Borel base: every element of each ideal is covered by a Borel set in that ideal. In the case $\Null$ and $\Me$ the fact that the underlying set is $\baire$, as opposed to any other perfect Polish space is unimportant in this paper, however, it obviously matters for $\Kb$ since many Polish spaces are themselves $\sigma$-compact and hence $\Kb$ on such a space is trivial.

Recall that a function $s:\omega \to [\omega]^{< \omega}$ is called a {\em slalom} if $|s(n)| \leq n$ for all $n$. The set of all slaloms is denoted $\mathcal S$. Given the product topology of the discrete topology on $\omega^{<\omega}$ the space $\mathcal S$ is homeomorphic to Baire space. Given an element $x \in \omega^\omega$ let $x \in^* s$ if for all but finitely many $n$ $x(n) \in s(n)$. Using the terminology of the first paragraph, recall the following well known theorem, due to Bartoszy\'nski.

\begin{fact}[Bartoszy\'nski, see Theorem 2.3.9 of \cite{BarJu95}]

The following equalities are provable in $\ZFC$.

\begin{enumerate}

\item
$\mathfrak{b} (\in^*) = add(\Null)$
\item
$\mathfrak{d}(\in^*) = cof (\Null)$
\end{enumerate}
\end{fact}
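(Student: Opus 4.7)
The plan is to deduce both equalities simultaneously by establishing a Tukey-equivalence between the relational system $(\baire, \in^*, \mathcal S)$ of localization and the partial order $(\Null, \subseteq)$ of Borel null subsets of $\baire$. Since $\mfb$ and $\mfd$ of $(\Null, \subseteq)$ are by definition $add(\Null)$ and $cof(\Null)$, a pair of Tukey reductions in opposite directions yields both of the claimed equalities at once.

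The key combinatorial input is the \say{small cover} characterization of the null ideal: every null $A \subseteq \baire$ is contained in a set of the form $\{x \in \baire : \exists^\infty n\, x \hook I_n \in F_n\}$ for some interval partition $(I_n)$ of $\omega$ and sets $F_n \subseteq \omega^{I_n}$ with $\sum |F_n|/|\omega^{I_n}|$ convergent. Such data strongly resemble slaloms, and translating between these objects is the heart of the argument.

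First I would establish the reduction $(\baire, \in^*, \mathcal S) \leq_T (\Null, \subseteq, \Null)$, which yields $\mfd(\in^*) \leq cof(\Null)$ and $add(\Null) \leq \mfb(\in^*)$. This amounts to defining a Borel map $\Phi \colon \baire \to \Null$ assigning each real $x$ a canonical null test-set, namely a null subset of the Polish space $\mathcal S$ witnessing the failure of $x$ to be localized, together with a Borel map $\Psi \colon \Null \to \mathcal S$ producing from any null set $N \supseteq \Phi(x)$ a slalom that localizes $x$. The construction of $\Psi$ repackages the data from the small-cover characterization into a slalom satisfying the size constraint $|\Psi(N)(n)| \leq n$, at which point the key implication $\Phi(x) \subseteq N \Rightarrow x \in^* \Psi(N)$ is essentially forced by the shape of the characterization.

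The reverse reduction $(\Null, \subseteq, \Null) \leq_T (\baire, \in^*, \mathcal S)$ is more delicate and is where I expect the main obstacle to lie. Here I would send each slalom $s$ to the Borel null set $\Psi'(s) := \{x \in \baire : x \in^* s\}$, verified to be null by a direct Borel--Cantelli computation, and each null set $N$ to a coding real $\Phi'(N) \in \baire$ extracted from a small-cover representation of $N$. The required implication $\Phi'(N) \in^* s \Rightarrow N \subseteq \Psi'(s)$ forces one to pack all of the countable data defining $N$ into a single real while respecting the slow growth $n$ of slalom entries. This is handled by regrouping the interval partition $(I_n)$ into sufficiently long blocks and arranging on each block both a piece of a cover of $N$ and an index telling the slalom where to look; verifying that $s$ capturing $\Phi'(N)$ suffices to reconstitute a full small cover of $N$ then completes the proof.
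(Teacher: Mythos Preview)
The paper does not prove this statement at all: it is stated as a background \emph{Fact} with a citation to Theorem 2.3.9 of \cite{BarJu95}, so there is nothing in the paper to compare your proposal against. Your outline is a reasonable high-level sketch of how the standard proof (as in Bartoszy\'nski--Judah) proceeds, via Galois--Tukey connections between the localization relation and the containment order on $\Null$, with the small-sets characterization of null sets as the combinatorial core. That said, a few details in your sketch are slightly garbled: in the first reduction you describe $\Phi(x)$ as a null subset of $\mathcal S$ rather than of $\baire$, which mixes the underlying spaces; and in the reverse reduction the map you call $\Psi'(s) = \{x : x \in^* s\}$ does not directly witness the implication you want, since not every null set is contained in a set of this form for a single slalom --- the actual argument passes through an intermediate characterization of $\Null$ in terms of sequences of small sets before coding into a single real. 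None of this is needed for the present paper, which simply quotes the result.
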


A similar theorem due to Miller relates the meager ideal to the relation $\neq^*$.

\begin{fact}[Miller \cite{Miller81}, see Thereoms 2.4.1 and 2.4.7 of \cite{BarJu95}]
The following equalities are provable in $\ZFC$.
\begin{enumerate}
\item
$\mfb (\neq^*) = non (\Me)$
\item
$\mfd (\neq^*) = cov (\Me)$
\end{enumerate}
\end{fact}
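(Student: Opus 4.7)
The plan is to establish the four inequalities by combining a basic topological observation for the ``easy'' halves with Bartoszy\'nski's combinatorial characterization of meager sets for the reverse inequalities.

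The observation is that $\{f : f \neq^* g\}$ is $F_\sigma$ meager in $\omega^\omega$ for every $g$: writing
\[
\{f : f \neq^* g\} = \bigcup_{N \in \omega} \{f : f(n) \neq g(n) \text{ for all } n \geq N\},
\]
each piece is closed (intersection of clopen conditions) and nowhere dense (any $s \in \omega^{<\omega}$ can be extended at some coordinate $\geq N$ to agree with $g$). Two of the four inequalities follow at once. For $\mfb(\neq^*) \leq non(\Me)$: a non-meager $A$ cannot sit inside any meager set of the form $\{f : f \neq^* g\}$, so $A$ is $\neq^*$-unbounded. For $\mfd(\neq^*) \geq cov(\Me)$: any $\neq^*$-dominating $D$ yields the meager cover $\omega^\omega = \bigcup_{d \in D} \{g : g \neq^* d\}$.

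For the reverse inequalities $\mfd(\neq^*) \leq cov(\Me)$ and $\mfb(\neq^*) \geq non(\Me)$ the main ingredient is Bartoszy\'nski's theorem: $A \subseteq \omega^\omega$ is meager iff there exist an interval partition $(I_n)_{n \in \omega}$ of $\omega$ and $d \in \omega^\omega$ such that for every $f \in A$, $f \upharpoonright I_n \neq d \upharpoonright I_n$ for all but finitely many $n$. The principal obstacle is that this ``blocked'' eventual difference is strictly weaker than pointwise $\neq^*$ --- $f$ can agree with $d$ at isolated coordinates inside each interval --- so the Bartoszy\'nski bound $d$ alone does not pointwise-$\neq^*$-bound $A$, and reconciling the two relations is the technical heart of Miller's argument. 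The standard device is to extract from each pair $((I_n), d)$ a countable family $\{d_k : k \in \omega\} \subseteq \omega^\omega$ of candidate pointwise bounds, indexed by combinatorial data recording where inside each $I_n$ the disagreement between $f$ and $d$ occurs, arranged so that every $f$ satisfying the blocked condition with $d$ is pointwise $\neq^*$-bounded by at least one $d_k$.

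Pooling these countable families over a meager cover of $\omega^\omega$ by $cov(\Me)$ many sets produces a $\neq^*$-dominating family of total size $cov(\Me) \cdot \aleph_0 = cov(\Me)$, which gives $\mfd(\neq^*) \leq cov(\Me)$. Dually, for $\mfb(\neq^*) \geq non(\Me)$ one starts from $A$ with $|A| < non(\Me)$ (hence meager), obtains the countable family $\{d_k\}$ from its Bartoszy\'nski data, and then uses the size bound on $A$ to pass from this countable collection of candidate pointwise bounds --- each of which bounds only a piece of $A$ --- to a single uniform pointwise $\neq^*$-bound for all of $A$ at once. This amalgamation step, where the hypothesis $|A| < non(\Me)$ enters in an essential way (beyond mere meagerness of $A$, as the example of the closed nowhere dense set $\{f \in \omega^\omega : f(2k) = 0 \text{ for all } k\}$ shows --- it is meager but not pointwise $\neq^*$-bounded), is the most delicate part of the argument and the principal technical obstacle.
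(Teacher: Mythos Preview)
The paper does not prove this fact; it is cited with references to Miller and to Theorems 2.4.1 and 2.4.7 of \cite{BarJu95}, so there is no paper-proof to compare against. Your easy inequalities ($\mfb(\neq^*) \leq non(\Me)$ and $cov(\Me) \leq \mfd(\neq^*)$) are correct.

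For the reverse inequalities there is a genuine gap: the central claim --- that from a Bartoszy\'nski pair $((I_n), d)$ one can extract \emph{countably many} $d_k$ such that every $f$ which eventually block-differs from $d$ along $(I_n)$ satisfies $f \neq^* d_k$ for some $k$ --- is false, and your own example refutes it. Take $I_n = \{2n, 2n+1\}$ and any $d$ with $d(2n) \neq 0$ for all $n$; then $M = \{f : \forall k\, f(2k) = 0\}$ lies inside the block-differing set $B(d,(I_n))$. If $M$ were covered by $\bigcup_{k<\omega}\{f : f \neq^* d_k\}$, then restricting to odd coordinates would exhibit a \emph{countable} $\neq^*$-dominating family in $\omega^\omega$, contradicting $\mfd(\neq^*) \geq cov(\Me) \geq \aleph_1$. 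The ``combinatorial data recording where inside each $I_n$ the disagreement occurs'' is an element of the uncountable space $\prod_n I_n$ and cannot be compressed to $\omega$ many candidate bounds. Consequently your argument for $\mfd(\neq^*) \leq cov(\Me)$ collapses, and the dual for $non(\Me) \leq \mfb(\neq^*)$ --- which rests on the same extraction together with an unspecified amalgamation step --- fails with it. The proofs in the cited references do not attempt to absorb individual meager sets into countably many $\neq^*$-cones; the passage from block-wise to pointwise eventual difference uses a genuinely different mechanism, and you should consult \cite{BarJu95} directly for it.
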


\section{Higher Dimensional Variants of A Fragment of Cicho\'n's Diagram}

In this section I define the cardinals that will be studied for the rest of the paper. The basic definition is given below.

\begin{definition}
Let $\mathcal I \in \{\mathcal N, \mathcal M, \mathcal K\}$ and $R \in \{\leq^*, \neq^*, \in^*\}$.
\begin{enumerate}
\item
$\mathfrak{b}(R_\mathcal I)$ is the least size of a set $A$ of functions from $\omega^\omega$ to $\omega^\omega$ for which there is no $g:\omega^\omega \to \omega^\omega$ ($g:\omega^\omega \to \mathcal S$ in the case of $R = \in^*$) such that for all $f \in A$ the set $\{x \in \omega^\omega \; | \; \neg f(x) R g(x)\}$ is in $\mathcal I$.
\item
$\mathfrak{d}(R_\mathcal I)$ is the least size of a set $A$ of functions from $\omega^\omega$ to $\omega^\omega$ ($\baire$ to $\mathcal S$ in the case of $R = \in^*$) so that for all $g:\omega^\omega \to \omega^\omega$ there is an $f \in A$ for which the set $\{x \in \omega^\omega \; | \; \neg g(x) R f(x)\}$ is in $\mathcal I$.
\end{enumerate}
\end{definition}

By varying $\mathcal I$ and $R$ this definition gives 18 new cardinals. For readability, let me give the details below for the case of the null ideal. Similar statements hold for $\Me$ and $\Kb$. First let's see explicitly what each relation $R_\mathcal I$ is. On the two lists below let $f, g:\baire \to \baire$ and $h:\baire \to \mathcal S$.

\begin{enumerate}
\item
$f \neq^*_\Null g$ if and only if for all but a measure zero set of $x \in \baire$ we have that $f (x) \neq^* g(x)$.

\item
$f \leq^*_\Null g$ if and only if for all but a measure zero set of $x \in \baire$ we have that $f(x) \leq^* g(x)$.

\item
$f \in^*_\Null h$ if and only if for all but a measure zero set of $x \in \baire$ we have that $f(x) \in^* h(x)$.
\end{enumerate}

For the cardinals now we get the following. Note that $\neg x \neq^* y$ means $\exists^\infty n \, x(n) = y(n)$ and the same for the other relations.

\begin{enumerate}

\item
$\mfb(\neq^*_\Null)$ is the least size of a $\neq^*_\Null$-unbounded set $A \subseteq \bbb$ i.e. $A$ is such that for each $f:\baire \to \baire$ there is a $g \in A$ so that the set of $\{x \; | \; \exists^\infty n \, g(x)(n) = f(x)(n)\}$ is not measure zero.

\item
$\mfd(\neq^*_\Null)$ is the least size of a $\neq^*_\Null$-dominating set $A \subseteq \bbb$ i.e. $A$ is such that for every $f:\baire \to \baire$ there is a $g \in A$ so that $\mu (\{x \; | \; f(x)\neq^* g(x)\}) = 1$.

\item
$\mfb(\leq^*_\Null)$ is the least size of a $\leq^*_\Null$-unbounded set $A \subseteq \bbb$ i.e. $A$ is such that for each $f:\baire \to \baire$ there is a $g \in A$ so that the set of $\{x \; | \; \exists^\infty n \, f(x)(n) < g(x)(n)\}$ is not measure zero.

\item
$\mfd(\leq^*_\Null)$ is the least size of a $\leq^*_\Null$-dominating set $A \subseteq \bbb$ i.e. $A$ is such that for every $f:\baire \to \baire$ there is a $g \in A$ so that $\mu (\{x \; | \; f(x)\leq^* g(x)\}) = 1$.

\item
$\mfb(\in^*_\Null)$ is the least size of a $\in^*_\Null$-unbounded set $A \subseteq \bbb$ i.e. $A$ is such that for each $f:\baire \to \mathcal S$ there is a $g \in A$ so that the set of $\{x \; | \; \exists^\infty n \, g(x)(n) \notin f(x)(n)\}$ is not measure zero.

\item
$\mfd(\in^*_\Null)$ is the least size of a $\in^*_\Null$-dominating set $A \subseteq (\mathcal S)^{\baire}$ i.e. $A$ is such that for every $f:\baire \to \baire$ there is a $g \in A$ so that $\mu (\{x \; | \; f(x)\in^* g(x)\}) = 1$.

\end{enumerate}

The first goal is to prove the following theorem, which shows that for each ideal the six associated cardinals fit together as in the case of the corresponding fragment of Cicho\'n's diagram on $\omega$, note not all cardinals in the $\omega$ case have analogues here.

\begin{theorem}[The Higher Dimensional Cicho\'n diagram]
For an ideal $\mathcal I \in \{\Null, \Me, \Kb\}$ and interpreting $\to$ as \say{is $\ZFC$-provably less than or equal to} the following all hold:

\begin{figure}[h]
\centering
  \begin{tikzpicture}[scale=1.5,xscale=2]
     \draw 
           (1,0) node (Bin*) {$\mfb(\in_\mathcal I^*)$}
           (1,1) node (Bleq*) {$\mfb(\leq_\mathcal I^*)$}
           (1,2) node (Bneq*) {$\mfb(\neq_\mathcal I^*)$}
           (2,0) node (Dneq*) {$\mfd(\neq_\mathcal I^*)$}
           (2,1) node (Dleq*) {$\mfd(\leq_\mathcal I^*)$}
           (2,2) node (Din*) {$\mfd(\in_\mathcal I^*)$}
           ;
     \draw[->,>=stealth]
            (Bin*) edge (Bleq*)
            (Bleq*) edge (Bneq*)
            (Bin*) edge (Dneq*)
            (Bleq*) edge (Dleq*)
            (Bneq*) edge (Din*)
            (Dneq*) edge (Dleq*)
            (Dleq*) edge (Din*)
;
            
  \end{tikzpicture}
\end{figure}

\label{mainthm1}
\end{theorem}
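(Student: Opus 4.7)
The plan is to prove each of the seven arrows individually by lifting the corresponding arrow in the classical Cicho\'n diagram to the higher-dimensional setting. Five arrows -- the two $\mfb$-side verticals, the two $\mfd$-side verticals, and the middle horizontal $\mfb(\leq^*_\mathcal I) \leq \mfd(\leq^*_\mathcal I)$ -- correspond to elementary classical inequalities, and their proofs lift essentially unchanged: one produces a witness fibrewise in $x \in \baire$ and observes that the new $\mathcal I$-exceptional set is contained in a finite union of old $\mathcal I$-exceptional sets, hence still in $\mathcal I$ by $\sigma$-additivity. The two remaining diagonal arrows $\mfb(\in^*_\mathcal I) \leq \mfd(\neq^*_\mathcal I)$ and $\mfb(\neq^*_\mathcal I) \leq \mfd(\in^*_\mathcal I)$ are the higher-dimensional analogs of the deeper Bartoszy\'nski inequalities $add(\Null) \leq cov(\Me)$ and $non(\Me) \leq cof(\Null)$, and require more work.

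For the direct arrows I would use three standard fibrewise tricks. First, the implication $y \in^* s \Rightarrow y \leq^* (n \mapsto \max s(n))$ converts an $\in^*_\mathcal I$-bound $s:\baire \to \mathcal S$ into an $\leq^*_\mathcal I$-bound $g$ with $g(x)(n) := \max s(x)(n)$, and the new exceptional set is contained in the old; this yields $\mfb(\in^*_\mathcal I) \leq \mfb(\leq^*_\mathcal I)$ and, dually, $\mfd(\leq^*_\mathcal I) \leq \mfd(\in^*_\mathcal I)$. Second, the ``$+1$'' trick $y \leq^* g \Rightarrow y \neq^* (g+1)$, applied fibrewise, gives $\mfb(\leq^*_\mathcal I) \leq \mfb(\neq^*_\mathcal I)$ and $\mfd(\neq^*_\mathcal I) \leq \mfd(\leq^*_\mathcal I)$. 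Third, for the middle horizontal, a putative $\leq^*_\mathcal I$-bound $g$ of a $\leq^*_\mathcal I$-dominating family $D$ is contradicted by applying the domination property to $x \mapsto (n \mapsto g(x)(n)+1)$ and picking any point in the intersection of the two resulting $\mathcal I$-measure-one sets.

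The harder part is the two diagonal arrows, which I would handle by adapting Bartoszy\'nski's classical slalom-interval construction fibrewise. For $\mfb(\neq^*_\mathcal I) \leq \mfd(\in^*_\mathcal I)$, given an $\in^*_\mathcal I$-dominating family $D$ of maps $\baire \to \mathcal S$, for each $s \in D$ and each $k < \omega$ I would build $g_s^{(k)}:\baire \to \baire$ by fibrewise partitioning $\omega$ into intervals of growing length and enumerating the finite set $s(x)(n)$ along these intervals, arranging that for any $h:\baire \to \baire$ fibrewise captured by $s$ there is some $k$ for which $\{x \in \baire : \exists^\infty n \,\, g_s^{(k)}(x)(n) = h(x)(n)\}$ is $\mathcal I$-positive; the family $\{g_s^{(k)} : s \in D,\, k < \omega\}$ is then $\neq^*_\mathcal I$-unbounded and of the required cardinality. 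The other diagonal $\mfb(\in^*_\mathcal I) \leq \mfd(\neq^*_\mathcal I)$ follows by the categorically dual construction. The main obstacle is precisely this fibrewise Bartoszy\'nski construction: classically the slalom-interval argument is already the subtlest arrow of the Cicho\'n diagram, and lifting it requires careful verification that the countable unions of $\mathcal I$-exceptional sets arising from the interval-by-interval bookkeeping combine correctly modulo $\mathcal I$, using the $\sigma$-additivity of each of $\Null$, $\Me$, $\Kb$ and the fact that they all contain the countable subsets of $\baire$.
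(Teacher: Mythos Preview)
Your proposal is correct and follows the same overall strategy as the paper: the five easy arrows lift pointwise exactly as you describe, and the two diagonals are handled by a fibrewise version of Bartoszy\'nski's interval--slalom construction. The one substantive difference is in the packaging of the diagonals. You introduce a countable family $\{g_s^{(k)} : k < \omega\}$ for each slalom-valued $s$ and then invoke $\sigma$-additivity of $\mathcal I$ to find a single $k$ working on an $\mathcal I$-positive set. The paper instead reparametrises once and for all via a fixed partition $\{J_{n,k} : k < n,\ n < \omega\}$ of $\omega$ into finite blocks, recodes reals and slaloms as ``$\mathcal J$-functions'' and ``$\mathcal J$-slaloms'', and proves a single pointwise combinatorial claim: from any $\mathcal J$-slalom $s$ one can build \emph{one} real $x_s$ such that $y' \in^* s \Rightarrow \exists^\infty n\ x_s(n) = y(n)$. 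This makes the lift purely pointwise --- the new exceptional set is literally contained in the old one --- so no countable index and no appeal to $\sigma$-additivity are needed for the hard arrows, and the same claim drives both diagonals by contraposition. Your ``main obstacle'' (countable unions of exceptional sets) is therefore an artifact of the extra index $k$; the paper's formulation sidesteps it entirely.
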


\begin{proof}
Most of these implications are easy, however two are more substantial. The easy cases are shown below in Figure 3. The arguments for these are exactly identical to those in the Cicho\'n case. For instance, if $A$ is $\leq^*_\mathcal I$-bounded, then of course it is not $\leq^*_\mathcal I$-dominating hence $\mfb(\leq^*_\mathcal I) \leq \mfd(\leq^*_\mathcal I)$. Similarly, if $A \subseteq \bbb$ is a set so that there is a function $h:\baire \to \mathcal S$ so that for all $f \in A$ $f \in^*_\mathcal I h$ then $\hat{h} (x) (n) = {\rm max} \, h (x) (n) + 1$ witnesses the $\leq^*_\mathcal I$-bound of $A$. The other easy cases follow the same lines.

\begin{figure}[h]
\centering
  \begin{tikzpicture}[scale=1.5,xscale=2]
     \draw 
           (1,0) node (Bin*) {$\mfb(\in_\mathcal I^*)$}
           (1,1) node (Bleq*) {$\mfb(\leq_\mathcal I^*)$}
           (1,2) node (Bneq*) {$\mfb(\neq_\mathcal I^*)$}
           (2,0) node (Dneq*) {$\mfd(\neq_\mathcal I^*)$}
           (2,1) node (Dleq*) {$\mfd(\leq_\mathcal I^*)$}
           (2,2) node (Din*) {$\mfd(\in_\mathcal I^*)$}
           ;
     \draw[->,>=stealth]
            (Bin*) edge (Bleq*)
            (Bleq*) edge (Bneq*)
            (Bleq*) edge (Dleq*)
            (Dneq*) edge (Dleq*)
            (Dleq*) edge (Din*)
;
            
  \end{tikzpicture}
\caption{The easy cases}
\end{figure}

The two more substantial inequalities are $\mfb(\in^*_\mathcal I) \leq \mfd(\neq^*_\mathcal I)$ and $\mfb(\neq^*_\mathcal I) \leq \mfd(\in^*_\mathcal I)$, so I turn my attention to these. For the rest of this section, fix an ideal $\mathcal I \in\{ \Null, \Me, \Kb\}$.

The proofs of the inequalities consist of \say{lifting} the proofs for the Cicho\'n diagram to the higher dimensional case, particularly those in \cite{Bar1987}. Fix finite, disjoint subsets of $\omega$ which collectively cover $\omega$, say $\mathcal J = \{J_{n, k} \; | \; k < n\}$. Let $J_n = \bigcup_{k < n} J_{n, k}$. Let's say that a $\mathcal J$-function is a function $x: \omega \to \omega^{< \omega}$ so that for every $n$ we have that $x(n)$ has domain $J_n$. Similarly a $\mathcal J$-slalom is a function $s:\omega \to [\omega^{< \omega}]^{< \omega}$ so that for each $n$ $|s(n)| \leq n$ and if $w \in s(n)$ then the domain of $w$ is $J_n$. If $x$ is a $\mathcal J$ function and $s$ a $\mathcal J$-slalom then we let $x \in^* s$ if and only if for all but finitely many $n$ $x(n) \in s(n)$. Clearly via some simple coding we can find homeomorphisms/measure isomorphisms between $\omega^\omega$ and the set of $\mathcal J$-functions (with the obvious topology) and $\mathcal S$ and the set of $\mathcal J$-slaloms. It's then routine to verify that $\mathfrak{b}(\in^*_\mathcal I)$ is the same for $\in^*$ defined on slaloms and elements of Baire space or their $\mathcal J$-versions. 

\begin{lemma}
$\mathfrak{b} (\in^*_\mathcal I) \leq \mathfrak{d}(\neq^*_\mathcal I)$
\end{lemma}

\begin{proof}
I use the version of $\mathfrak{b}(\in^*_\mathcal I)$ defined in terms of $\mathcal J$-slaloms as in the paragraph before the statement of the lemma. Let $\kappa < \mathfrak{b} (\in_\mathcal I^*)$. I need to show that $\kappa < \mathfrak{d}(\neq^*_\mathcal I)$. Fix a set $A \subseteq (\omega^\omega)^{\omega^\omega}$ of size $\kappa$. Let's see that $A$ is not $\neq^*_\mathcal I$-dominating. To be clear, a set is $\neq^*_\mathcal I$ dominating if for every function $f:\omega^\omega \to \omega^\omega$ there is a $g \in A$ so that for all $x$ save for a set in $\mathcal I$ $f(x) \neq^* g(x)$. Negating this, we need to find a function $f:\omega^\omega \to \omega^\omega$ so that for all $g \in A$ the set $\{x \; | \; \exists^\infty n \, g(x)(n) = f(x) (n) \}$ is $\mathcal I$-positive. In fact, I will show that under the assumption, such an $f$ can be found so that each such set is $\mathcal I$-measure one.

Given an element of Baire Space, $x:\omega \to \omega$ let $x'$ be the $\mathcal J$-function defined by $x' (n) = x \hook J_n$. Note that since the $J_n$'s cover $\omega$ and are disjoint the function $x \mapsto x'$ is a bijection. Given a function $f:\omega^\omega \to \omega^\omega$ let $f '$ similarly be defined by letting $f'(x) = f(x) '$. Let $A' = \{g' \; | \; g \in A\}$. Since this set has size $\kappa$ it is $\in^*_\mathcal I$-bounded i.e. there is a function $f_A$ with domain the set of $\mathcal J$-functions and range the set of $\mathcal J$-slaloms so that for all $g' \in A'$ $\{x \; | \; g'(x) \notin^* f_A(x)\} \in \mathcal I$. I need to transform $f_A$ into a function $f$ as advertized in the previous paragraph. The crux of the argument is the following claim, which will also be used in Lemma \ref{bd2} below as well.

\begin{claim}
Given a $\mathcal J$-slalom $s$, there is a function $x_s:\omega \to \omega$ so that for all $y:\omega \to \omega$ if $y' \in^* s$ then there are infinitely many $n < \omega$ so that $x_s(n) = y (n)$.
\end{claim}

\begin{proof}[Proof of Claim]
Fix a $\mathcal J$-slalom $s$. For each $n$ let $s(n) = \{w^n_0, ..., w^n_{n-1}\}$. Define $x_s:\omega \to \omega$ by letting for each $n$ and $k < n$ and $l \in J_{n, k}$ $x_s (l) = w^n_k(l)$. Suppose now that $y: \omega \to \omega$ is such that $y' (n) \in s (n)$ for all but finitely many $n < \omega$. Fix some $n$ so that $y' (n) \in s(n)$, say, $y' (n) = w^n_k$. Then for each $l \in J_{n, k}$ $y (l) = w^n_k (l) = x_s(l)$. Since there are cofinitely many such $n$'s there are infinitely many such $l$'s so $x_s$ is as needed.
\end{proof}

Now, returning to the proof of the lemma, let $f:\omega^\omega \to \omega^\omega$ be defined by letting $f(x)$ be the function $x_{f_A(x)}$ in the terminology of the claim. In particular, if $g:\omega^\omega \to \omega^\omega$ then for every $x \in \omega^\omega$ if $g'(x) \in^* f_A(x)$ then there are infinitely many $n$ so that $g(x) (n) = f(x)(n)$. In particular the set $\{x \; | \; g'(x) \in^* f_A (x)\}$ is contained in the set $\{x \; | \; \exists^\infty n \, g(x)(n) = f(x)(n)\}$. For $g \in A$ the former is $\mathcal I$-measure one and so the latter is as well. As a result $f$ is as needed.
\end{proof}

By essentially dualizing the proof above we get as well the following.
\begin{lemma}
$\mathfrak{b}(\neq^*_\mathcal I) \leq \mathfrak{d}(\in^*_\mathcal I)$
\label{bd2}
\end{lemma}

\begin{proof}
Suppose $\kappa < \mfb (\neq^*_\mathcal I)$ and let $A \subseteq (\mathcal S)^{\baire}$ be of size $\kappa$. I need to show that there is an $f \in \bbb$ so that for each $h \in A$ the set of $x$ so that $f(x) \notin^* h(x)$ does not have $\mathcal I$-measure one. For each $h \in A$ let $g_h \in \bbb$ be defined by letting, for each $x \in \baire$ $g_h (x) = x_{h(x)}$ as defined in the claim of the previous lemma. In particular, for each $x$ note that if $f(x) \in^* h(x)$ then $\exists^\infty n \, g_h (x) (n) = f(x)(n)$. Now let $\bar{A} = \{g_h \; | \; h \in A\}$. This set has size at most $\kappa$ so there is a $\neq^*_\mathcal I$-bound by assumption, say $f$. This means that for each $g_h \in \bar{A}$ we have that $\{x \; | \; g_h (x) \neq^* f(x)\}$ is $\mathcal I$-measure one. But now the lemma is proved since for every $x$ so that $g_h (x) \neq^* f(x)$ by the contrapositive of the implication defining $g_h$ we have that $f(x) \notin^* h(x)$.
\end{proof}

Combining the easy cases shown in Figure 3 with the proofs of the above two lemmas then completes the proof of Theorem \ref{mainthm1}. 
\end{proof}

Using the fact that every set in $\Kb$ is meager, we get the following relation between the diagrams for $\Me$ and $\Kb$.

\begin{proposition}
The following inequalities are provable in $\ZFC$:

\begin{figure}[h]
\centering
  \begin{tikzpicture}[scale=1.5,xscale=2]
     \draw 
           (1,0) node (Bin*) {$\mfb(\in_\Me^*)$}
           (1,1) node (Bleq*) {$\mfb(\leq_\Me^*)$}
           (1,2) node (Bneq*) {$\mfb(\neq_\Me^*)$}
           (2,0) node (Dneq*) {$\mfd(\neq_\Me^*)$}
           (2,1) node (Dleq*) {$\mfd(\leq_\Me^*)$}
           (2,2) node (Din*) {$\mfd(\in_\Me^*)$}
	     (0,0) node (BinK*) {$\mfb(\in_\Kb^*)$}
           (0,1) node (BleqK*) {$\mfb(\leq_\Kb^*)$}
           (0,2) node (BneqK*) {$\mfb(\neq_\Kb^*)$}
           (3,0) node (DneqK*) {$\mfd(\neq_\Kb^*)$}
           (3,1) node (DleqK*) {$\mfd(\leq_\Kb^*)$}
           (3,2) node (DinK*) {$\mfd(\in_\Kb^*)$}
           ;
     \draw[->,>=stealth]
            (Bin*) edge (Bleq*)
            (Bleq*) edge (Bneq*)
            (Bin*) edge (Dneq*)
            (Bleq*) edge (Dleq*)
            (Bneq*) edge (Din*)
            (Dneq*) edge (Dleq*)
            (Dleq*) edge (Din*)
            (Dleq*) edge (DleqK*)
            (Dneq*) edge (DneqK*)
            (Din*) edge (DinK*)
            (BleqK*) edge (Bleq*)
            (BneqK*) edge (Bneq*)
            (BinK*) edge (Bin*)
            (BinK*) edge (BleqK*)
            (BleqK*) edge (BneqK*)
            (DneqK*) edge (DleqK*)
	      (DleqK*) edge (DinK*)

;
 \end{tikzpicture}
\end{figure}

\end{proposition}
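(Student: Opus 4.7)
The plan is to observe that every set in $\Kb$ is meager, so $\Kb \subseteq \Me$ as ideals on $\baire$. Indeed, any compact subset of $\baire$ is closed and $\leq^*$-bounded (by a single function, not just eventually), so it has empty interior in $\baire$ and is therefore nowhere dense; a countable union of such sets is meager. Combined with Theorem~\ref{mainthm1} applied to each of $\Me$ and $\Kb$ separately, this will give all the arrows in the figure.

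More precisely, I will first note that the six arrows internal to the $\Me$-block and the six arrows internal to the $\Kb$-block are immediate from Theorem~\ref{mainthm1}. For the six \emph{cross} arrows it suffices to prove the following general monotonicity principle: whenever $\mathcal I \subseteq \mathcal J$ are two ideals on $\baire$ and $R$ is any of the relations $\leq^*, \neq^*, \in^*$, we have
\[
\mfb(R_\mathcal I) \leq \mfb(R_\mathcal J) \qquad \text{and} \qquad \mfd(R_\mathcal J) \leq \mfd(R_\mathcal I).
\]
To see this, simply observe that $R_\mathcal I \subseteq R_\mathcal J$ as subsets of $(\omega^\omega)^{\omega^\omega} \times (\omega^\omega)^{\omega^\omega}$: if the ``bad set'' $\{x : \neg f(x) R g(x)\}$ lies in $\mathcal I$, it certainly lies in $\mathcal J$. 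Consequently, any $R_\mathcal J$-unbounded family is automatically $R_\mathcal I$-unbounded (witnesses transfer, since $\neg fR_\mathcal J g$ implies $\neg fR_\mathcal I g$), and any $R_\mathcal I$-dominating family is automatically $R_\mathcal J$-dominating.

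Applying this principle with $\mathcal I = \Kb$ and $\mathcal J = \Me$ then yields the three ``upper'' $\mfb$-arrows $\mfb(R_\Kb) \leq \mfb(R_\Me)$ for $R \in \{\in^*, \leq^*, \neq^*\}$, and symmetrically the three ``lower'' $\mfd$-arrows $\mfd(R_\Me) \leq \mfd(R_\Kb)$. Together with the two copies of Theorem~\ref{mainthm1}, this accounts for every arrow shown in the figure.

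There is no real obstacle here: the only non-formal step is verifying $\Kb \subseteq \Me$, and that follows from the standard fact that compact subsets of $\baire$ are nowhere dense. Everything else is a direct translation of the inclusion of ideals into inequalities of bounding and dominating numbers via the relational framework set up in the introduction.
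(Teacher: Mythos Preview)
Your proof is correct and follows essentially the same approach as the paper: both arguments rely on the inclusion $\Kb \subseteq \Me$ (every $\sigma$-compact set is meager) together with Theorem~\ref{mainthm1} for the internal arrows. You package the cross arrows as a general monotonicity principle for $\mathcal I \subseteq \mathcal J$, whereas the paper unwinds the $\mfb$ and $\mfd$ cases directly, but the content is identical.
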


\begin{proof}
Fix a relation $R$. To see that $\mfb (R_\Kb) \leq \mfb ( R_\Me)$ note that if $A \subseteq \bbb$ is $R_\Kb$-bounded, then it means that there is a function $f:\baire \to \baire$ so that for each $g \in A$ the set $\{ x \; | \; \neg g (x) R f(x)\}$ is $\leq^*$-bounded by some $z \in \baire$. But this means in particular that it is meager and hence for each $g \in A$ $g R_\Me f$. 

To see that $\mfd (R_\Me) \leq \mfd (R_\Kb)$, suppose that $A \subseteq \bbb$ is not $R_\Me$-dominating. This means that there is some $f:\baire \to \baire$ so that for each $g \in A$ the set $\{x \; | \; f(x) R g(x)\}$ is not comeager. It follows that in particular it is not $\Kb$-measure one then (since each such set is comeager) and therefore no $g \in A$ is a $R_\Kb$ bound on $f$, so $A$ is not $R_\Kb$-dominating.
\end{proof}

\section{ Relations between the Higher Dimensional Cardinals and Standard Cardinal Characteristics}

This section concerns the relationship between provable inequalities between the cardinals introduced previously and cardinal characteristics of the continuum. I look first at the relationship between the higher dimensional cardinals and cardinal $\mfc^+$ and then I compare the diagrams for the null and meager ideals. 

First let's note that for the $\mfb(R_\mathcal I)$ cardinals there are easy $\ZFC$ lower and upper bounds in terms of cardinal characteristics.
\begin{proposition}
For each $\mathcal I \in \{\Null, \Me, \Kb\}$ and $R \in \{ \in^*, \leq^*, \neq^*\}$ we have $\mfb(R) \leq \mfb(R_\mathcal I) \leq \mfb(R)^{{\rm non}(\mathcal I)}$.
\end{proposition}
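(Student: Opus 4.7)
The plan is to prove the two inequalities separately, noting that the argument is uniform across the three choices of $R$ (modulo the cosmetic point that for $R = \in^*$ the codomain of the witnessing function $g$ is $\mathcal{S}$ rather than $\omega^\omega$).

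For the lower bound $\mfb(R) \leq \mfb(R_\mathcal I)$, I will argue directly that every $A \subseteq (\omega^\omega)^{\omega^\omega}$ of size strictly less than $\mfb(R)$ is in fact \emph{everywhere} $R_\mathcal I$-bounded, so a fortiori $R_\mathcal I$-bounded. Fix such an $A$. For each $x \in \omega^\omega$ the slice $\{f(x) \mid f \in A\}$ is a subset of $\omega^\omega$ (or $\mathcal{S}$, in the $\in^*$ case) of cardinality $< \mfb(R)$, hence $R$-bounded; choose $g(x)$ to be a witness. Then for every $f \in A$ and every $x$ we have $f(x) R g(x)$, so $\{x \mid \neg f(x) R g(x)\} = \emptyset \in \mathcal I$.

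For the upper bound $\mfb(R_\mathcal I) \leq \mfb(R)^{{\rm non}(\mathcal I)}$, I will exhibit a concrete $R_\mathcal I$-unbounded family of the required size. Fix an $\mathcal I$-positive set $X \subseteq \omega^\omega$ with $|X| = {\rm non}(\mathcal I)$, and for each $x \in X$ fix an $R$-unbounded set $B_x \subseteq \omega^\omega$ (respectively $B_x \subseteq \mathcal{S}$) of cardinality $\mfb(R)$. Let
\[
A = \{\, f : \omega^\omega \to \omega^\omega \mid f(x) \in B_x \text{ for every } x \in X, \text{ and } f(x) = 0 \text{ for } x \notin X\,\}.
\]
Then $|A| = \prod_{x \in X} |B_x| = \mfb(R)^{{\rm non}(\mathcal I)}$. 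To see $A$ is $R_\mathcal I$-unbounded, suppose $g : \omega^\omega \to \omega^\omega$ (or $g : \omega^\omega \to \mathcal{S}$) is a candidate bound. For each $x \in X$, since $B_x$ is $R$-unbounded and $g(x)$ alone is a single potential witness, there exists $b_x \in B_x$ with $\neg b_x R g(x)$. Define $f \in A$ by $f(x) = b_x$ for $x \in X$ and $f(x) = 0$ elsewhere. Then $\{x \mid \neg f(x) R g(x)\} \supseteq X$, and since $X \notin \mathcal I$ this set is not in $\mathcal I$. Hence $g$ fails to $R_\mathcal I$-bound $f$, as required.

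There is no real obstacle here: both directions reduce to slicing an $\mathcal I$-positive set pointwise and invoking the definition of $\mfb(R)$. The only minor care is bookkeeping for $R = \in^*$, where the target space is $\mathcal{S}$ in the bound and the domain of the $B_x$ is $\omega^\omega$; in both the lower and upper bound arguments this is accommodated simply by letting the witnessing $g$ take values in $\mathcal{S}$ and letting the $B_x$ range over either $\omega^\omega$ or $\mathcal{S}$ according to the role they play in the definition of $\mfb(\in^*_\mathcal I)$.
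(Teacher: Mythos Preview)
Your proof is correct and follows essentially the same approach as the paper: pointwise bounding for the lower inequality, and building an unbounded family from all functions with values in a fixed $R$-unbounded set on an $\mathcal I$-positive set of minimal size for the upper inequality. The only cosmetic difference is that you allow a separate $B_x$ for each $x \in X$ while the paper uses a single $B$, but this changes nothing.
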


\begin{proof}
Fix $R$ and $\mathcal I$. I start with the lower bound. Suppose $\kappa < \mfb(R)$ and let $A = \{f_\alpha \; | \; \alpha < \kappa\} \subseteq \bbb$. Define $g:\baire \to \baire$ (or $g:\baire \to \mathcal S$ in the case $R = \in^*$) by letting $g(x)$ be an $R$-bound on the set $\{f_\alpha (x) \; | \; \alpha < \kappa\}$. Such a bound exists by assumption that $\kappa < \mfb(R)$. But in this case $A$  is $R_\mathcal I$-bounded by $g$ since for each $\alpha < \kappa$ we have $f_\alpha (x) R g(x)$ for {\em every} $x \in \baire$, not just an $\mathcal I$-measure one set.

For the upper bound, fix an $R$-unbounded family of minimal size $B = \{x_\alpha \; | \; \alpha < \mfb(R)\}$. Let $C = \{y_\alpha \; | \; \alpha < {\rm non}(\mathcal I)\}$ be an $\mathcal I$-positive set of minimal size. Let $B^C$ be the set of functions from $C$ to $B$. Note that this set has size $\mfb(R)^{{\rm non}(\mathcal I)}$. Extend each element arbitrarily to the whole space. We claim that in fact this set is unbounded. Indeed, let $f:\baire \to \baire$ (or $f:\baire \to \mathcal S$ in the case $R = \in^*$). We need to show that $f$ is not a bound. To see this, let $h:C \to B$ be defined by letting $h(x)$ be not $R$-bounded by $f(x)$. Note that since $B$ is an unbounded family this is well defined. Extend $h$ arbitrarily to some $\bar{h}$. Now it can't be the case that $\bar{h} R_\mathcal I f$ since $\{x \; | \; \neg \bar{h}(x) R f(x)\}$ contains $C$, which is not in $\mathcal I$.
\end{proof}

I would like to argue that the standard diagonal arguments show that the cardinals defined above are greater than or equal to $\mathfrak{c}^+$, however this is not the case in $\ZFC$ alone. What I can show instead is that this holds under additional assumptions on certain cardinal characteristics on $\omega$. For the statement of the lemma below, recall that $non (\Kb) =  \mathfrak{b}$, see Theorem 2.8 of \cite{BlassHB}.
\begin{lemma}
For each $\mathcal I \in \{\Null, \Me, \Kb\}$ and $R \in \{ \in^*, \leq^*, \neq^*\}$, if $\mathfrak{b}(R) = non (\mathcal I) = \mfc$ then $\mfc^+ \leq \mathfrak{b}(R_\mathcal I)$. In particular, if $add (\Null) = \mfc$ then all 18 cardinals introduced in the previous section are greater than $\mfc$.
\end{lemma}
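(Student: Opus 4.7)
The plan is to run a direct transfinite recursion that produces an $R_\mathcal I$-bound for every family of size at most $\mfc$, using the two hypotheses in complementary ways: $\mfb(R)=\mfc$ lets me $R$-bound small fiberwise sets, and $\mathrm{non}(\mathcal I)=\mfc$ guarantees that the resulting exceptional set, which has size less than $\mfc$, is negligible.

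Concretely, fix any $A=\{f_\alpha:\alpha<\mfc\}\subseteq(\omega^\omega)^{\omega^\omega}$ and enumerate $\baire=\{x_\alpha:\alpha<\mfc\}$. For each $\alpha<\mfc$ the set $\{f_\beta(x_\alpha):\beta\le\alpha\}$ has cardinality less than $\mfc=\mfb(R)$, so it is $R$-bounded; pick such a bound and call it $g(x_\alpha)$ (this lives in $\baire$ if $R\in\{\le^*,\neq^*\}$ and in $\mathcal S$ if $R=\in^*$). This defines $g$ on all of $\baire$. I would then check that $g$ is an $R_\mathcal I$-bound for $A$: given $\beta<\mfc$, whenever $\alpha\ge\beta$ we have $f_\beta(x_\alpha)\,R\,g(x_\alpha)$ by construction, so
\[
\{x\in\baire:\neg f_\beta(x)\,R\,g(x)\}\subseteq\{x_\alpha:\alpha<\beta\},
\]
a set of cardinality $\le|\beta|<\mfc=\mathrm{non}(\mathcal I)$, which is therefore in $\mathcal I$. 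This shows that no family of size $\le\mfc$ can be $R_\mathcal I$-unbounded, which is exactly $\mfc^+\le\mfb(R_\mathcal I)$.

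For the final sentence, I would invoke Bartoszy\'nski's fact quoted in the introduction together with the standard Cicho\'n inequalities: $\mathrm{add}(\Null)=\mfb(\in^*)$ is the least entry of Cicho\'n's diagram, so if it equals $\mfc$ then $\mfb$, $\mathrm{non}(\Me)$, $\mathrm{non}(\Null)$, and $\mathrm{add}(\Null)$ are all $\mfc$; since $\mathrm{non}(\Kb)=\mfb$, we also get $\mathrm{non}(\Kb)=\mfc$. Thus every pair $(R,\mathcal I)$ with $R\in\{\in^*,\le^*,\neq^*\}$ and $\mathcal I\in\{\Null,\Me,\Kb\}$ satisfies the hypothesis of the main statement.

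There is no real obstacle here; the only thing to be a bit careful about is the case $R=\in^*$, where $g(x_\alpha)$ must be a slalom containing the finitely-many-per-coordinate values of the functions $\{f_\beta(x_\alpha):\beta\le\alpha\}$, and this is exactly what $\mfb(\in^*)=\mathrm{add}(\Null)=\mfc$ provides. The rest is bookkeeping: verify that $\{x_\alpha:\alpha<\beta\}$ is indeed in $\mathcal I$ for every $\beta<\mfc$, which is immediate from $\mathrm{non}(\mathcal I)=\mfc$ and the fact that all three ideals $\Null,\Me,\Kb$ contain all sets of cardinality less than $\mathrm{non}$ of the ideal.
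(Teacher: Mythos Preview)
Your argument is correct and is essentially identical to the paper's: the same diagonal construction enumerating $\baire$ as $\{x_\alpha:\alpha<\mfc\}$, setting $g(x_\alpha)$ to an $R$-bound on $\{f_\beta(x_\alpha):\beta\le\alpha\}$ via $\mfb(R)=\mfc$, and then observing that the exceptional set for each $f_\beta$ is contained in $\{x_\alpha:\alpha<\beta\}\in\mathcal I$ via $\mathrm{non}(\mathcal I)=\mfc$. Your justification of the ``in particular'' clause is somewhat more explicit than the paper's, which leaves the reduction from $\mathrm{add}(\Null)=\mfc$ to the reader.
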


\begin{proof}
This is essentially a generalization of the standard diagonal arguments used to show that various cardinal characteristics are uncountable. The point is that in that case, the relations (on $\omega$) under consideration are always so that every finite set has an upper bound and the ideal is always the ideal of finite sets. It is exactly because arithmetic of cardinal characteristics is not so simple that the additional hypotheses are needed.

Fix $R$ and $\mathcal I$ and assume $\mathfrak{b}(R) = non (\mathcal I) = \mfc$. Let $f_\alpha : \baire \to \baire$ for each $\alpha < \mfc$. We want to find a $g:\baire \to \baire$ (or $g:\baire \to \mathcal S$ in the case of $R = \in^*$) so that for all $\alpha$ $f_\alpha R_\mathcal I g$. This is done as follows. First, list the elements of $\baire$ as $\{x_\alpha \; | \; \alpha < \mfc\}$. Next, note that for each $\beta < \mfc$, by the fact that $non (\mathcal I) = \mfc$ we have that $\{x_\alpha \; | \; \alpha < \beta\} \in \mathcal I$ and, by the fact that $\mathfrak{b}(R) = \mfc$ we have that for each $x_\gamma \in \baire$ the set $\{f_\alpha (x_\gamma) \; | \; \alpha < \beta\}$ has an $R$-bound, say $y^\gamma_\beta$. Now define $g$ so that $g(x_\alpha) = y^\alpha_\alpha$. It follows that for all $\alpha$ if $\gamma > \alpha$ then $f_\alpha (x_\gamma) R g(x_\gamma)$ and since the set $\{x_\gamma \; | \; \gamma > \alpha \}$ is $\mathcal I$-measure one we're done.
\end{proof}

Since writing and submitting this paper, in ongoing joint work with J. Brendle we have since shown that the cardinals of the form $\mfd(R_\mathcal I)$ are provably at least $\mfc^+$ but the cardinals of the form $\mfb(R_\mathcal I)$ can be both consistently equal to and strictly less than $\mfc$, even $\aleph_1$ with the continuum arbitrarily large. See \cite[Main Theorem 1.1]{BS21}\footnote{The anonymous referee independently observed an instance of this, and I thank them for the comment.}.

Finally in this section let me compare the cardinals for $\Me$ and $\Null$. Every argument given so far has worked equally well for each of them, and the theorem below suggests that this is not an accident.

\begin{theorem}
If $add (\Null) = cof (\Null)$ then for every relation $R \in\{ \in^*, \leq^*, \neq^*\}$ we have that $\mfb (R_\Null) = \mfb (R_\Me)$ and $\mfd (R_\Null) = \mfd (R_\Me)$.
\label{M=N}
\end{theorem}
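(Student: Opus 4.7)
My plan is to exploit the complete collapse of Cicho\'n's diagram implied by the hypothesis $\mathrm{cof}(\Null) = \mathrm{add}(\Null) = \kappa$. Under this assumption all classical Cicho\'n cardinals equal $\kappa$ and, by the Bartoszy\'nski--Judah structure theory (see \cite{BarJu95}, Section 2.3), the ideals $\Null$ and $\Me$ become strongly Tukey-equivalent via Borel maps. Concretely, there exist Borel maps $\phi, \psi : \baire \to \baire$ such that $\phi^{-1}[M] \in \Null$ for every Borel meager $M$, and $\psi^{-1}[N] \in \Me$ for every Borel null $N$. These transfer maps are the central engine of the argument.

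The proof of each of the six equalities then follows the same scheme; consider for instance $\mfb(R_\Null) \leq \mfb(R_\Me)$. Given a family $A \subseteq (\baire)^{\baire}$ of size less than $\mfb(R_\Null)$, I pre-compose to form $A' = \{f \circ \psi : f \in A\}$. By hypothesis $A'$ admits an $R_\Null$-bound $g'$, so for each $f \in A$ the set $\{x : \neg f(\psi(x)) R g'(x)\}$ is null. Borel-selecting a function $g$ (with codomain $\baire$, or $\mathcal S$ when $R = \in^*$) so that $g \circ \psi$ agrees with $g'$ on a sufficiently large set and using the ideal-transfer property of $\psi$, the bad set $\{y : \neg f(y) R g(y)\}$ comes out meager, witnessing that $g$ is an $R_\Me$-bound for $A$. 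The reverse inequality uses $\phi$ in place of $\psi$, and the two inequalities for $\mfd$ are obtained by dualizing the roles of the pre-composed family and the candidate bound.

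The principal obstacle is the un-composition step: producing a genuine bound $g$ defined on all of $\baire$ from a bound $g'$ of the pre-composed family in a way that preserves the ideal relationship between the bad sets. This requires $\phi$ and $\psi$ to admit measurable cross-sections good enough to support a Borel-definable inverse modulo the relevant ideals, so that the push-forward of a null (resp.\ meager) set becomes meager (resp.\ null). Bartoszy\'nski's classical constructions provide maps with the structural properties needed for this, so I expect no surprise; the work lies in the careful bookkeeping, especially to uniformly handle the three relations $R \in \{\in^*, \leq^*, \neq^*\}$ and the change of codomain in the $\in^*$ case, which is harmless since $\mathcal S$ is homeomorphic to $\baire$ and the transfer argument depends only on the pointwise character of $R$.
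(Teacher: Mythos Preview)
Your overall strategy---pre-compose a family by a map that transfers between the ideals, bound the resulting family, then un-compose---is exactly the scheme the paper uses. The gap is in your choice of transfer maps.

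The Borel maps $\phi,\psi$ you describe do not exist. A Borel $\psi$ with $\psi^{-1}[N]\in\Me$ for every Borel null $N$ would send every Cohen real to a random real (if $c$ is Cohen over $V$ then $c$ avoids every ground-model Borel meager set, hence avoids each $\psi^{-1}[N]$, so $\psi(c)$ avoids every ground-model Borel null $N$); but Cohen forcing adds no random reals. Symmetrically, a Borel $\phi$ with $\phi^{-1}[\Me]\subseteq\Null$ would force random reals to produce Cohen reals, which again fails. So the appeal to ``Bartoszy\'nski's classical constructions'' for maps of this shape is misplaced: the Borel Tukey maps in \cite{BarJu95} witness the $\ZFC$ \emph{inequalities} of the Cicho\'n diagram, not an ideal-swap of the kind you need.

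This is also why your ``un-composition'' step is a genuine obstacle rather than bookkeeping. With one-directional, non-injective maps you cannot pass from a bound $g'$ on $\{f\circ\psi\}$ to a bound $g$ on $\{f\}$: you would need $\psi^{-1}[B]\in\Null \Rightarrow B\in\Me$, which is the \emph{converse} of the property you assumed for $\psi$. The paper sidesteps all of this by invoking the Erd\H{o}s--Sierpi\'nski duality (Theorem 2.1.8 of \cite{BarJu95}): under $add(\Null)=cof(\Null)$ there is a (necessarily non-Borel) \emph{bijection} $f:\baire\to\baire$ with $f[A]\in\Null\Leftrightarrow A\in\Me$ and $f[A]\in\Me\Leftrightarrow A\in\Null$. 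With a bijection the un-composition is trivial---one simply takes $g=\bar g\circ f^{-1}$---and both directions of each equality fall out immediately. Replace your $\phi,\psi$ by this single bijection and your argument goes through verbatim.
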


The proof of this theorem follows immediately from the following two lemmas, the first of which is well known.
\begin{lemma}[Theorem 2.1.8 of \cite{BarJu95}]
If $add(\Null) = cof(\Null)$ then there is a bijection $f:\baire \to \baire$ so that for all $A \subseteq \baire$ $f(A) \in \Null$ if and only if $A \in \Me$ and $f(A) \in \Me$ if and only if $A \in \Null$.
\label{erdosthm}
\end{lemma}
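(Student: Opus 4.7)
The plan is to adapt the classical Erd\H{o}s--Sierpi\'nski back-and-forth construction, with the role of $\CH$ played by the hypothesis $add(\Null) = cof(\Null)$. Setting $\kappa := add(\Null) = cof(\Null)$, Cicho\'n's diagram collapses so that $add(\Me) = cof(\Me) = cov(\Null) = cov(\Me) = \kappa$. In particular both $\Null$ and $\Me$ admit increasing cofinal families of Borel sets of length $\kappa$ whose unions exhaust $\baire$, and fewer than $\kappa$ sets from either ideal cannot cover $\baire$.

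I would first fix such enumerations $\{N_\alpha : \alpha < \kappa\} \subseteq \Null$ and $\{M_\alpha : \alpha < \kappa\} \subseteq \Me$, each $\subseteq$-increasing, Borel, cofinal in its ideal, and with $\bigcup_\alpha N_\alpha = \bigcup_\alpha M_\alpha = \baire$. The associated level sets $N_\alpha' := N_\alpha \setminus \bigcup_{\beta<\alpha} N_\beta$ and $M_\alpha' := M_\alpha \setminus \bigcup_{\beta<\alpha} M_\beta$ then partition $\baire$. By inserting appropriate padding sets at each level (which is possible because $cov(\Null) = cov(\Me) = \kappa$ ensures no initial segment of the bases yet exhausts $\baire$, and each Borel null or Borel meager set has size $\mfc$), I may further arrange $|N_\alpha'| = |M_\alpha'|$ for every $\alpha < \kappa$.

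Given this matching, for each $\alpha < \kappa$ pick an arbitrary bijection $f_\alpha : M_\alpha' \to N_\alpha'$ and set $f := \bigcup_{\alpha < \kappa} f_\alpha$. Since the $M_\alpha'$ partition $\baire$ and the $N_\alpha'$ partition $\baire$, $f$ is a bijection of $\baire$ with itself. Moreover $f(M_\alpha) = \bigcup_{\beta \leq \alpha} f(M_\beta') = \bigcup_{\beta \leq \alpha} N_\beta' = N_\alpha$ for every $\alpha$. So if $A \in \Me$ then $A \subseteq M_\alpha$ for some $\alpha$, hence $f(A) \subseteq N_\alpha \in \Null$; conversely, if $f(A) \in \Null$ then $f(A) \subseteq N_\alpha$ for some $\alpha$, so $A \subseteq M_\alpha \in \Me$. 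The symmetric argument for $f^{-1}$, which bijects $N_\alpha'$ back onto $M_\alpha'$, yields the converse equivalence.

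The main obstacle is the cardinality-matching step: arranging $|N_\alpha'| = |M_\alpha'|$ for each $\alpha$. This is precisely where the full strength of $add(\Null) = cof(\Null)$ (and the resulting collapse of Cicho\'n's diagram) is needed; the weaker assumption that both ideals have cofinality $\mfc$ would not by itself provide the uniform control required to align the two bases level by level. Once the enumerations are so matched, the piecewise bijection and the verification of the duality are routine.
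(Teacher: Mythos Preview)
The paper does not prove this lemma at all; it is quoted verbatim as Theorem 2.1.8 of \cite{BarJu95} and used as a black box in the subsequent lemma. So there is no in-paper proof to compare against.

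Your sketch is the standard Erd\H{o}s--Sierpi\'nski argument, and this is indeed how the cited reference proceeds. One small imprecision: you write that ``each Borel null or Borel meager set has size $\mfc$,'' which is of course false for finite or countable sets. What you need is that each \emph{uncountable} Borel set has size $\mfc$, together with the observation that at each stage $\alpha<\kappa$ the set $\bigcup_{\beta<\alpha}N_\beta$ is still null (by $add(\Null)=\kappa$) and hence its complement contains a perfect null set which can be folded into $N_\alpha$; likewise on the meager side. This guarantees every level set $N_\alpha'$, $M_\alpha'$ is uncountable Borel, hence of size $\mfc$, and the matching goes through. With that correction the argument is complete.
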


\begin{lemma}
If there is a bijection $f:\baire \to \baire$ as in Lemma \ref{erdosthm} then for every relation $R \in \{ \in^*, \leq^*, \neq^*\}$ we have that $\mfb (R_\Null) = \mfb (R_\Me)$ and $\mfd (R_\Null) = \mfd (R_\Me)$.
\end{lemma}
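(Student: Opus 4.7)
The plan is to use the bijection $f$ from Lemma \ref{erdosthm} to set up a size-preserving correspondence between families of functions witnessing the relevant cardinals on the two sides. For any $g:\baire \to \baire$ (or $g:\baire \to \mathcal S$ in the case $R = \in^*$), I would define its pullback $\tilde g := g \circ f$. Since $f$ is a bijection, the map $g \mapsto \tilde g$ is a bijection on the function space in question, and so preserves the cardinalities of families.

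The key calculation is that for any two such functions $g, h$,
\[ \{x \in \baire : \neg \tilde g(x) R \tilde h(x)\} = f^{-1}\bigl(\{y \in \baire : \neg g(y) R h(y)\}\bigr), \]
simply because $\tilde g(x) R \tilde h(x)$ iff $g(f(x)) R h(f(x))$. Since $f$ swaps $\Null$ and $\Me$, so does $f^{-1}$ (if $A = f^{-1}(B)$ then $B \in \Null \iff f(A) \in \Null \iff A \in \Me$, and symmetrically). Therefore the left-hand side lies in $\Me$ iff the right-hand side lies in $\Null$. This yields the crucial equivalence
\[ g \mathrel{R_\Null} h \iff \tilde g \mathrel{R_\Me} \tilde h. \]

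From here the lemma would follow by unwinding definitions. Given any family $A$, set $\tilde A := \{\tilde g : g \in A\}$, which has the same cardinality as $A$. By the equivalence, $h$ is an $R_\Null$-bound for $A$ iff $\tilde h$ is an $R_\Me$-bound for $\tilde A$; since $h \mapsto \tilde h$ is itself a bijection, $A$ is $R_\Null$-unbounded iff $\tilde A$ is $R_\Me$-unbounded, yielding $\mfb(R_\Null) = \mfb(R_\Me)$. The same equivalence, with the roles of the quantifiers exchanged, shows that $A$ is $R_\Null$-dominating iff $\tilde A$ is $R_\Me$-dominating, giving $\mfd(R_\Null) = \mfd(R_\Me)$.

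There is no genuine obstacle; the only point worth flagging is the case $R = \in^*$, where the codomain is $\mathcal S$ rather than $\baire$. The same pullback $g \mapsto g \circ f$ still makes sense and still witnesses a bijection of the relevant function space, because $f$ acts only on the domain variable; the displayed identity above is then valid verbatim, so no separate argument is needed.
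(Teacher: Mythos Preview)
Your proof is correct and follows essentially the same approach as the paper: both arguments transfer between the $\Null$- and $\Me$-settings by composing with the bijection $f$ (the paper writes $A_f = \{g \circ f : g \in A\}$ and shows an $R_\Null$-bound $\bar g$ on $A_f$ yields the $R_\Me$-bound $\bar g \circ f^{-1}$ on $A$, then appeals to symmetry). Your packaging of this as a single equivalence $g \mathrel{R_\Null} h \iff \tilde g \mathrel{R_\Me} \tilde h$ is somewhat cleaner, but the underlying idea is identical.
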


\begin{proof}
Fix a relation $R \in \{ \in^*, \leq^*, \neq^*\}$ and let $f:\baire \to \baire$ be a bijection as described in Lemma \ref{erdosthm}. First, suppose that $\kappa < \mfb (R_\Null)$ and let $A \subseteq ( \baire )^{\baire}$ be a set of size $\kappa$. I claim that there is a function $g_A:\baire \to \baire$ ($g_A:\baire \to \mathcal S$ in the case $R = \in^*$) so that for all $g \in A$ $g R_\Me g_A$ and hence $\kappa < \mfb (R_\Me)$. Let $A_f = \{ g \circ f \; | \; g \in A\}$. Since $f$ is a bijection $|A_f| = \kappa$. By the hypothesis, let $\bar{g}:\baire \to \baire$ be an $R_\Null$ bound on $A_f$. I claim that $g_A = \bar{g} \circ f^{-1}$ is as needed. We have that for every $g \in A$ if $f^{-1}(x) = y$ is in the measure one set for which $g(f(y)) R \bar{g} (y)$ is true then the following holds:

\begin{equation*}
g(x) =  g (f(f^{-1}(x)) R \bar{g} ( f^{-1} (x)) = g_A (x)
\end{equation*}
\linebreak
Therefore $f^{-1} (\{x \; | \; \neg g(x) R g_A (x)\})$ is contained in $\{x \; | \; \neg g(f(x)) R \bar{g}(x)\}$, which is null by assumption and so the former is null as well. Hence by the property of $f$ it follows that $\{x \; | \; \neg g(x) R g_A(x)\}$ is meager so $g_A$ is an $R_\Me$-bound as needed.

This shows that $\mfb (R_\Null) \leq \mfb (R_\Me)$ however an identical argument, flipping the roles of the meager and null sets, shows the reverse inequality so we get that $\mfb (R_\Null) = \mfb (R_\Me)$. 

An essentially dual argument works to show that $\mfd (R_\Null) = \mfd (R_\Me)$. Let me sketch it, though I leave out the details. Assuming that $\kappa < \mfd (R_\Null)$ we fix a set $A \subseteq (\baire)^{\baire}$ of size $\kappa$, define $A_f$ as before and let $\bar{g}$ be a function not dominated by any member of $A_f$. Then essentially the same argument shows that $\bar{g} \circ f^{-1}$ is a function not dominated by any member of $A$ and, again by symmetry we obtain the required equality.
\end{proof}

J. Brendle and I in the above mentioned ongoing work have shown that without additional assumptions both $\mfb(R_\Me) < \mfb(R_\Null)$ and $\mfb(R_\Null) < \mfb(R_\Me)$ are consistent for every $R \in \{\in^*, \leq^*, \neq^*\}$. However, $\mfd(R_\Null) = \mfd(R_\Me)$ in $\ZFC$ for every $R$, see \cite[Main Theorem 1.4]{BS21}. 

\section{Consistency Results}
In this section I consider consistent separations between the cardinals. For readability, I focus on the case of $\mathcal I = \Null$, however, it's routine to check that the arguments go through for $\mathcal I = \Me$ and $\mathcal I = \Kb$. Indeed the essential point will be simply that $\Null$ has a Borel base and contains the countable subsets of $\baire$. Also, I will only be considering models of $\CH$ so by Theorem \ref{M=N} any separation between nodes in the $\Null$ diagram will hold equally for the $\Me$ diagram.

From now on assume $\GCH$ holds and fix an enumeration of $\baire$ in order type $\omega_1$, say $\{x_\alpha \; | \; \alpha < \omega_1\}$. Also fix an enumeration of the Borel sets in $\Null$ in order type $\omega_1$, say $\{N_\alpha \; | \; \alpha < \omega_1\}$.  Suppose we have some forcing notion $\mathbb P$ which does not add reals. Note that in this case if $B$ is a Borel set then $\mathbb P$ forces that the name for $B$ is equal to its evaluation in the ground model. Also, since $\mathbb P$ does not add reals, it does not add any Borel sets either. This translates to the following idea, which is used in several proofs. Suppose $\dot{A}$ is a $\mathbb P$ name for a subset of $\baire$. If for some condition $p \in \mathbb P$ we have that $p \forces\mu( \dot{A} ) = \check{0}$, then we can always find a $q \leq p$ and a Borel null set in the ground model $N$ so that $q \forces \dot{A} \subseteq \check{N}$.

The following simple lemma will be used in several proofs.

\begin{lemma}
Let $\vec{N}_0 = \langle N_{0, \alpha} \; | \; \alpha < \omega_1\rangle$ and $\vec{N}_1 = \langle N_{1, \alpha} \; | \; \alpha < \omega_1\rangle$ be two enumerations of Borel null sets of $\baire$, possibly with repetitions, in order type $\omega_1$ so that for each $x \in \baire$ there are uncountably many $\alpha, \beta$ so that $x \notin N_{0, \alpha} \cup N_{1, \beta}$. Then there is an enumeration in order type $\omega_1$, say $\langle(N'_{0, \alpha}, N'_{1, \alpha} ) \; | \; \alpha <\omega_1\rangle$ of the set of all pairs $(N_{0, \beta}, N_{1, \gamma})$ so that for each $\alpha < \omega_1$ we have $x_\alpha \notin N'_{0, \alpha} \cup N'_{1, \alpha}$.
\label{mainlemma}
\end{lemma}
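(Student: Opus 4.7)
The plan is to construct the required enumeration via a transfinite back-and-forth of length $\omega_1$. Fix an auxiliary enumeration $\langle p_\eta : \eta < \omega_1\rangle$ of the index set $\omega_1 \times \omega_1$, which has cardinality $\omega_1$ under $\GCH$. I will build a bijection $F : \omega_1 \to \omega_1 \times \omega_1$ with the property that if $F(\alpha) = (\beta,\gamma)$ then $x_\alpha \notin N_{0,\beta} \cup N_{1,\gamma}$, and then set $(N'_{0,\alpha}, N'_{1,\alpha}) := (N_{0,\beta}, N_{1,\gamma})$.

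At stage $\delta < \omega_1$ the partial function $F$ will have a countable domain $D_\delta \subseteq \omega_1$ and countable range $R_\delta \subseteq \omega_1 \times \omega_1$, and I extend it by two moves. In Move A, I let $\alpha^\ast := \min(\omega_1 \setminus D_\delta)$; by the hypothesis of the lemma there are uncountably many pairs $(\beta,\gamma)$ with $x_{\alpha^\ast} \notin N_{0,\beta} \cup N_{1,\gamma}$, and since $R_\delta$ is countable I can pick such a pair $p \notin R_\delta$ and declare $F(\alpha^\ast) := p$. In Move B, I let $p^\ast = (\beta,\gamma)$ be the $\langle p_\eta\rangle$-least pair not yet in the range; since $N_{0,\beta} \cup N_{1,\gamma}$ is a Borel null set, its complement in $\baire$ is conull, and hence (by $\GCH$, since $\{x_\alpha : \alpha < \omega_1\}$ enumerates all of $\baire$) contains uncountably many $x_\alpha$. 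I pick one whose index $\alpha$ lies outside the currently extended domain, and set $F(\alpha) := p^\ast$. At limit stages I take unions, which preserves countability of the running domain and range.

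Standard back-and-forth bookkeeping then yields both surjectivities: since Move A at each stage addresses the least missing element of $\omega_1$, no $\alpha < \omega_1$ can remain omitted through all $\omega_1$ stages, and symmetrically Move B forces every pair to eventually enter the range. The resulting $F$ is therefore a bijection with the desired pointwise avoidance property, and reindexing via $F$ gives the enumeration $\langle (N'_{0,\alpha}, N'_{1,\alpha}) : \alpha < \omega_1\rangle$.

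The main (really only) obstacle is the dovetailing itself. A purely greedy construction that simply assigns some compatible pair to each index $\alpha$ in increasing order easily meets the pointwise condition but risks leaving uncountably many pairs unused, while a purely greedy enumeration of the pairs does not obviously line up with the correct $x_\alpha$ at each index. The two-sided back-and-forth reconciles these: the hypothesis of the lemma supplies enough compatible pairs for Move A, while the nullness of each $N_{0,\beta} \cup N_{1,\gamma}$, combined with $\GCH$, supplies enough compatible indices $\alpha$ for Move B.
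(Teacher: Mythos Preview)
Your back-and-forth argument is correct and reaches the same conclusion, but the paper takes a more economical one-sided route: a single greedy pass through the indices $\alpha < \omega_1$ in order, at each step choosing the \emph{least-indexed} (in a fixed auxiliary enumeration of all pairs) unused pair whose union avoids $x_\alpha$. The obstacle you flag in your last paragraph --- that a greedy assignment might leave uncountably many pairs unused --- is exactly what the paper dispatches directly: if the pair at auxiliary index $\gamma$ were never chosen, then all its predecessors were chosen by some countable stage, and for cofinally many $\alpha$ we would have $x_\alpha \in N''_{0,\gamma} \cup N''_{1,\gamma}$; but a null set cannot contain a cocountable subset of $\baire$. This is precisely the nullness observation you deploy proactively in Move~B, so the two proofs rest on the same ingredients; the paper simply avoids the second family of moves by noting that always picking the \emph{least} available compatible pair already forces exhaustion. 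Your version is more symmetric and arguably more robust to tweaks of the hypotheses, at the cost of slightly more bookkeeping. (A small aside: $|\omega_1 \times \omega_1| = \omega_1$ needs no $\GCH$; the only place the standing $\CH$ assumption really enters is in guaranteeing that the complement of a null set meets uncountably many $x_\alpha$, which you use in Move~B and the paper uses in its surjectivity argument.)
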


\begin{proof}
First fix any enumeration of $\vec{N}_0 \times \vec{N}_1$, say $\langle (N''_{0, \alpha}, N''_{1, \alpha}) \; | \; \alpha < \omega_1\rangle$ and define inductively for each $\alpha$ $(N'_{0, \alpha}, N'_{1, \alpha})$ to be the least $\gamma$ so that $(N''_{0, \gamma}, N''_{1, \gamma})$ has not yet been enumerated and $x_\alpha \notin  N''_{0, \gamma} \cup N''_{1, \gamma}$. Observe first that there is some such $\gamma$ since by assumption there are uncountably many pairs $(N_{0, \gamma_0}, N_{1, \gamma_1})$ whose union does not contain $x_\alpha$ and only countably many from each list has appeared so far. It remains to show that every $(N''_{0, \gamma}, N''_{1, \gamma})$ gets enumerated under this procedure. Suppose not and let $\gamma$ be least so that $(N''_{0, \gamma}, N''_{1, \gamma})$ is not enumerated. Since for every $\beta < \gamma$ the pair $(N''_{0, \beta}, N''_{1, \beta})$ was enumerated, there was some countable stage by which this happened and so for cocountably many $\alpha$ it must have been the case that $x_\alpha \in N''_{0, \gamma} \cup N''_{1, \gamma}$. But this is impossible since $N''_{0, \gamma} \cup N''_{1, \gamma}$ is measure zero and hence cannot contain a cocountable set.
\end{proof}

\subsection{Generalizing Cohen Forcing}

The point of this subsection is to prove the following theorem.

\begin{theorem}[$\GCH$]
Let $\kappa > \aleph_2$ be regular. There is a cofinality preserving forcing notion $\mathbb P_\kappa$ so that if $G \subseteq \mathbb P_\kappa$ is $V$-generic then in $V[G]$ we have $\mfc^+ = \aleph_2 = \mathfrak{b} (\neq^*_\Null) < \mfd (\neq^*_\Null) = 2^\mfc = \kappa$.
\label{mainthm2}
\end{theorem}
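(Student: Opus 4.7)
The plan is to let $\mathbb{P}_\kappa$ be the \emph{generalized Cohen forcing} $\mathrm{Fn}(\omega^\omega \times \omega \times \kappa, \omega, \aleph_1)$, consisting of countable partial functions $p : \omega^\omega \times \omega \times \kappa \rightharpoonup \omega$ ordered by reverse inclusion. This is $\sigma$-closed (so no new reals are added and $\CH$ is preserved) and, under $\GCH$, satisfies the $\aleph_2$-c.c.\ by the usual $\Delta$-system argument: the $\Delta$-system lemma applied to $\aleph_2$-many countable domains yields a countable root $r$, and since only $\aleph_1$ functions $r \to \omega$ exist (by $\CH$), two conditions in the system agree on $r$ and are compatible. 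Thus all cofinalities are preserved and, by a standard count of nice names, $2^{\mfc} = \kappa$ in $V[G]$. Let $g_\alpha : \omega^\omega \to \omega^\omega$ denote the function $g_\alpha(x)(n) := G(x,n,\alpha)$ extracted from the generic.

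The technical heart of the argument is a ``meeting lemma'': for each $\alpha < \kappa$ and each $f : \omega^\omega \to \omega^\omega$ lying in $V[G_{\kappa \setminus \{\alpha\}}]$, and for \emph{every} $x \in \omega^\omega$, one has $g_\alpha(x)(m) = f(x)(m)$ for infinitely many $m$. Factoring $\mathbb{P}_\kappa \cong \mathbb{P}_{\{\alpha\}} \times \mathbb{P}_{\kappa \setminus \{\alpha\}}$ puts $f$ in the base model of $\mathbb{P}_{\{\alpha\}}$, and for each pair $x, n$ the set $\{p \in \mathbb{P}_{\{\alpha\}} : \exists m \geq n,\ p(x,m,\alpha) = f(x)(m)\}$ is dense, since any countable condition leaves cofinitely many $m$ free. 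Because $\CH$ holds in $V[G_{\kappa \setminus \{\alpha\}}]$, there are only $\aleph_1$-many such dense sets, and the $\mathbb{P}_{\{\alpha\}}$-generic meets all of them. In particular, $\{x \in \omega^\omega : \exists^\infty m\ g_\alpha(x)(m) = f(x)(m)\} = \omega^\omega$, which has full measure.

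For $\mfd(\neq^*_\Null) \geq \kappa$: given any family $\{f_\xi : \xi < \lambda\}$ with $\lambda < \kappa$, the $\aleph_2$-c.c.\ provides supports $A_\xi \subseteq \kappa$ with $|A_\xi| \leq \aleph_1$, so $|\bigcup_\xi A_\xi| < \kappa$; picking $\alpha \notin \bigcup_\xi A_\xi$, the meeting lemma shows no $f_\xi$ is a $\neq^*_\Null$-bound of $g_\alpha$, so $\{f_\xi\}$ is not $\neq^*_\Null$-dominating. The upper bound $\mfd(\neq^*_\Null) \leq 2^{\mfc} = \kappa$ is trivial. For $\mfb(\neq^*_\Null) \leq \aleph_2$, I claim $\{g_\alpha : \alpha < \aleph_2\}$ is $\neq^*_\Null$-unbounded: any proposed bound $g$ lies in some $V[G_A]$ with $|A| \leq \aleph_1 < \aleph_2$, and any $\alpha < \aleph_2$ outside $A$ fails to have $g_\alpha \neq^*_\Null g$ by the meeting lemma. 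The reverse bound $\mfb(\neq^*_\Null) \geq \aleph_2$ is Lemma 3.3 applied in $V[G]$, where $\CH$ forces $\mfb(\neq^*) = \mathrm{non}(\Null) = \mfc$.

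The main obstacle is securing the meeting lemma with its \emph{pointwise} conclusion (valid at every $x$, not merely almost every $x$); this is what allows the conclusion of measure-one, rather than merely positivity, in both cardinal computations. The key leverage is that $\CH$ is preserved into each intermediate model $V[G_{\kappa \setminus \{\alpha\}}]$, so the continuum-many dense sets (one per $x$) all live in that model and are met by genericity. Once this is in hand, both $\mfb$ and $\mfd$ follow from the same ``pick a coordinate outside the support'' template, combined with cofinality preservation and Lemma 3.3.
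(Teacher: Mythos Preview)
Your meeting lemma is false as stated, and the density argument you give for it does not work. You claim that for each $x$ and $n$ the set $\{p \in \mathbb{P}_{\{\alpha\}} : \exists m \geq n,\ p(x,m,\alpha) = f(x)(m)\}$ is dense because ``any countable condition leaves cofinitely many $m$ free.'' But a countable condition can occupy an entire column: take $q$ with $\mathrm{dom}(q) = \{x_0\} \times \omega \times \{\alpha\}$ and $q(x_0,m,\alpha) = f(x_0)(m) + 1$ for all $m$. This $q$ is a legitimate condition in $\mathrm{Fn}(\omega^\omega \times \omega \times \kappa,\omega,\aleph_1)$, and no extension of it lies in your set for $x = x_0$ and any $n$; indeed $q$ already forces $g_\alpha(x_0)(m) \neq f(x_0)(m)$ for every $m$. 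So the pointwise conclusion cannot hold.

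The repair is to weaken the lemma to: for $f \in V[G_{\kappa\setminus\{\alpha\}}]$, the set $\{x : \exists^\infty m\ g_\alpha(x)(m) = f(x)(m)\}$ is not null. This is proved by exactly the trick the paper uses for its forcing $\mathbb{C}_\Null$: if $p$ forces this set into a Borel null $N$, choose $x \notin N$ outside the (countable, hence null) first projection of $\mathrm{dom}(p)$ and extend $p$ by setting $p(x,m,\alpha) = f(x)(m)$ for all $m$. Your belief that the pointwise/measure-one conclusion is essential is mistaken: in both the $\mfd$ and the $\mfb$ computations, all you actually use is that $\{x : g_\alpha(x) \neq^* f(x)\}$ fails to be of measure one, and positivity of the complementary set is exactly that. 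With this correction your argument goes through. It is then close in spirit to the paper's, the main differences being that the paper uses conditions with Borel \emph{null} (rather than countable) domain, and establishes $\mfb(\neq^*_\Null)\leq\aleph_2$ by showing the ground-model functions remain unbounded (via an explicit diagonalization over pairs of null sets and conditions) rather than by exhibiting an unbounded family of generic functions as you do.
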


The proof will involve an iteration of length $\kappa$ of a certain forcing notion, $\mathbb{C}_\Null$. Let me begin by introducing this forcing notion and studying its properties.
\begin{definition}
The $\Null$-Cohen forcing, denoted $\mathbb{C}_\Null$, is the set of all $p:{\rm dom} (p) \subseteq \baire \to \baire$ so that ${\rm graph}(p)$ is Borel and ${\rm dom} (p)$ is measure zero. We let $p \leq q$ if and only if $p \supseteq q$.
\end{definition}

The following observations are easy but will be useful.
\begin{proposition}
The forcing $\mathbb{C}_\Null$ is $\sigma$-closed and has size $\mfc$, hence it has the $\mfc^+$-c.c. In particular, under $\CH$ all cofinalities and hence cardinalities are preserved.
\end{proposition}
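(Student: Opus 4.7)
The plan is to verify each of the three claims in turn: $\sigma$-closure, size, and then derive the chain condition and cofinality preservation.

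For $\sigma$-closure, given a decreasing sequence $p_0 \geq p_1 \geq p_2 \geq \cdots$ in $\mathbb{C}_\Null$, the natural candidate for a lower bound is $p = \bigcup_{n < \omega} p_n$. Since the sequence is decreasing in the forcing order (which is reverse inclusion), each $p_{n+1}$ extends $p_n$ as a function, so $p$ is a well-defined partial function from $\baire$ to $\baire$. Its domain $\mathrm{dom}(p) = \bigcup_n \mathrm{dom}(p_n)$ is a countable union of measure zero sets, hence measure zero (using that $\Null$ is a $\sigma$-ideal). Its graph $\mathrm{graph}(p) = \bigcup_n \mathrm{graph}(p_n)$ is a countable union of Borel subsets of $\baire \times \baire$, hence Borel. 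Thus $p \in \mathbb{C}_\Null$ and $p \leq p_n$ for all $n$.

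For the size bound, I would use that there are exactly $\mfc$ Borel subsets of $\baire \times \baire$ (coded by reals via a standard Borel coding), and every condition in $\mathbb{C}_\Null$ is determined by its graph, which is Borel. Hence $|\mathbb{C}_\Null| \leq \mfc$. The reverse inequality is immediate from singleton functions (e.g., for each $x \in \baire$, the condition $p_x = \{(x, 0)\}$ has Borel graph and measure zero domain). Thus $|\mathbb{C}_\Null| = \mfc$. The $\mfc^+$-c.c.\ is then automatic, since any antichain is in particular a subset of $\mathbb{C}_\Null$ and so has size at most $\mfc < \mfc^+$.

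Finally, for cofinality preservation under $\CH$: $\sigma$-closure guarantees that no new $\omega$-sequences of ground model ordinals are added, so in particular $\aleph_1$ is preserved and no cofinality is reduced to $\omega$. The $\mfc^+$-c.c., which under $\CH$ is the $\aleph_2$-c.c., preserves all cofinalities $\geq \aleph_2$ by the standard argument (any new cofinal sequence of length $\lambda \geq \aleph_2$ would be covered by a ground model sequence of the same length using the chain condition on nice names for the supremum function). Combined, every cofinality is preserved, and hence every cardinal is preserved. No real obstacle is anticipated here; the only subtlety is confirming that the union of the graphs in the $\sigma$-closure argument is still a function, which follows from the conditions forming an increasing chain of partial functions.
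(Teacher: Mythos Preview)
Your proof is correct and follows essentially the same approach as the paper: the union of a descending $\omega$-sequence works as a lower bound because countable unions of Borel (resp.\ null) sets are Borel (resp.\ null), and the size bound comes from each condition being a Borel subset of $(\baire)^2$. Your version is slightly more detailed (you explicitly check the union is a function, give the reverse size inequality via singletons, and spell out the cofinality-preservation argument), but the underlying ideas are identical.
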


\begin{proof}
First let's see that $\mathbb C_\Null$ is $\sigma$-closed. Given a descending sequence $p_0 \geq p_1 \geq p_2 ...$ let $p = \bigcup_{n < \omega} p_n$. Since the countable union of Borel sets is Borel it follows that $p$ has a Borel graph and since the countable union of null sets is null, it follows that $p$ has null domain. Thus $p$ is a condition so it is a lower bound on the sequence of $p_n$'s. 

To see that $\mathbb C_\Null$ has size $\mfc$ it suffices to note that each condition is a Borel subset of $(\omega^\omega)^2$, of which there are only $\mfc$ many.
\end{proof}

Note that since $\mathbb{C}_\Null$ adds no reals or Borel sets and every condition $p \in \mathbb{C}_\Null$ is a Borel set it follows that $\forces_{\mathbb{C}_\Null} \dot{\mathbb{C}}_\Null = \check{\mathbb{C}}_{\Null}$ and so in particular, the product and iteration of $\mathbb{C}_\Null$ are the same. Now, a straightforward density argument shows that $\mathbb{C}_\Null$ adds a function $g:\omega^\omega \to \omega^\omega$, namely the union of the generic filter. Indeed it's easy to see that if $p$ is any condition and $N$ is any Borel null set then there is a condition $q \leq p$ so that $N \subseteq {\rm dom}(q)$. I need to verify two properties of $\mathbb{C}_\Null$, given as Lemmas \ref{lemma1} and \ref{lemma2} below. The first will imply that in an iterated extension $\mfd(\neq^*_\Null)$ becomes large and the second will imply that $\mfb(\neq^*_\Null)$ remains small in an iterated extension. 

\begin{lemma}
If $G \subseteq \mathbb{C}_\Null$ is generic over $V$ then in $V[G]$ the set of $f \in (\baire)^{\baire} \cap V$ is not dominating with respect to the relation $\neq^*_\Null$.
\label{lemma1}
\end{lemma}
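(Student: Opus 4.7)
The plan is to take the generic function itself as the witness. Define $g_G := \bigcup G$. First verify that $g_G$ is a total function $\baire \to \baire$ in $V[G]$: since $\mathbb{C}_\Null$ is $\sigma$-closed it adds no reals, so $(\baire)^{V[G]} = (\baire)^V$, and for each $x \in \baire$ the set $D_x = \{p \in \mathbb{C}_\Null : x \in {\rm dom}(p)\}$ is dense. Indeed, given any $p \in \mathbb{C}_\Null$, the extension $p \cup \{(x,0)\}$ remains a condition, since adding one point preserves both the Borel graph and the null domain.

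Next, fix an arbitrary $f \in V \cap \bbb$ and set $A_f := \{x \in \baire : \exists^\infty n\, g_G(x)(n) = f(x)(n)\}$. The task is to show $A_f$ is $\Null$-positive in $V[G]$. Since $\mathbb{C}_\Null$ adds no reals and hence no Borel codes, the Borel null sets in $V[G]$ are exactly those coded in $V$, and every null set in $V[G]$ is covered by such a Borel null set. Hence it suffices to show that for every Borel null set $N \in V$ there is some $x \in \baire \setminus N$ with $\exists^\infty n\, g_G(x)(n) = f(x)(n)$.

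The whole thing then reduces to a short density argument. Given any $p \in \mathbb{C}_\Null$, any Borel null $N \in V$, and the fixed $f$, the set ${\rm dom}(p) \cup N$ is null, so pick $x \in \baire \setminus ({\rm dom}(p) \cup N)$ and let $q := p \cup \{(x, f(x))\}$. Then $q$ is a condition extending $p$, $q \forces g_G(x) = \check{f}(x)$, and $x \notin \check{N}$, so $q$ forces that $x$ witnesses $A_{\check f} \not\subseteq \check{N}$. Thus the set of conditions forcing "$A_{\check f} \not\subseteq \check{N}$" is dense, and by genericity $V[G]$ satisfies $A_f \not\subseteq N$ for every ground-model Borel null $N$. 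Hence $A_f$ is $\Null$-positive in $V[G]$; as $f$ was arbitrary, $V \cap \bbb$ is not $\neq^*_\Null$-dominating in $V[G]$.

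No substantial obstacle arises: the argument rests entirely on the $\sigma$-closure of $\mathbb{C}_\Null$ (which yields both absoluteness of $\Null$ between $V$ and $V[G]$ and totality of $g_G$) and the trivial fact that a single point can always be added to a condition's graph. Essentially the same argument, replacing "null" by "meager" or "$\sigma$-compact", will handle the analogous statements for $\Me$ and $\Kb$, as the paper indicates at the start of the section.
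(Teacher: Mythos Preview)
Your proof is correct and follows essentially the same approach as the paper: both use the generic function $g_G=\bigcup G$ as the witness and extend an arbitrary condition by adjoining the pair $(x,f(x))$ for some $x$ outside both the condition's domain and a given Borel null set $N$. The only cosmetic difference is that the paper phrases this as a proof by contradiction (assuming some $p$ forces $\{x:f(x)=g(x)\}\subseteq N$ and deriving an incompatible extension), while you give the equivalent direct density argument.
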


\begin{proof}
In fact a stronger statement is true, namely if $g = \bigcup G$ and $\dot{g}$ is the name for $g$, then for any $f:\omega^\omega \to \omega^\omega$ in the ground model the set $\{x \; | \; f(x) = g(x)\}$ is not measure zero. To see this, suppose that for some condition $p$ and ground model function $f$ we have that $p \forces \mu (\{x \; | \; \check{f}(x) = \dot{g}(x)\}) = \check{0}$. Since every null set is contained in a Borel null set, there is a Borel Null set $N$, necessarily in the ground model since $\mathbb{C}_\Null$ is $\sigma$-closed, and a strengthening $q \leq p$ so that $q \forces \{x \; | \; \check{f}(x) = \dot{g} (x)\} \subseteq \check{N}$. But now let $x \notin N \cup {\rm dom}(q)$ (this is possible since $N \cup {\rm dom}(q) \in \mathcal N$). It is straightforward to verify that $q^* = q \cup \{\langle x, f(x)\rangle \}$ is a condition extending $q$ but clearly $q^* \forces \{x \; | \; \check{f}(x) = \dot{g}(x)\} \nsubseteq \check{N}$, which is a contradiction. It follows in particular that for every $f \in V$ we have that on a non null set of $x$ there are infinitely many $n < \omega$ so that $f(x) (n) = g(x) (n)$. This implies the lemma.
\end{proof}

\begin{lemma}
If $G \subseteq \Pi_I \mathbb{C}_\Null$ is generic over $V$ for the countable support product of $\mathbb C_\Null$ over an index set $I$ of size at most $\aleph_1$ then in $V[G]$ the set of $f \in (\baire)^{\baire} \cap V$ is unbounded with respect to the relation $\neq^*_\Null$.
\label{lemma2}
\end{lemma}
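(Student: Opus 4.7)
My plan is to prove the stronger claim that for any $\mathbb P$-name $\dot{g}$ for a function in $(\baire)^{\baire}$ and any $p \in \mathbb{P} := \Pi_I \mathbb{C}_\Null$ (countable support), there is an $f \in V \cap (\baire)^{\baire}$ such that $p$ forces the meeting set $M := \{x \in \baire : \exists^{\infty} n\, f(x)(n) = \dot{g}(x)(n)\}$ to be non-null; the unboundedness of $V \cap (\baire)^{\baire}$ then follows immediately. Because $\mathbb P$ is $\sigma$-closed it adds no reals and no Borel sets, so a null set in $V[G]$ is precisely a subset of some Borel null set of $V$; in particular each $\dot{g}(x)$ is a name for a ground-model real, and the set of conditions below $p$ deciding $\dot{g}(x)$ is dense below $p$.

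Using $\CH$ and $|\mathbb P| = \aleph_1$ (from $|I| \leq \aleph_1$, countable supports, and $|\mathbb{C}_\Null| = \mfc$), I enumerate $\baire \setminus {\rm dom}(p) = \{x_\alpha : \alpha < \omega_1\}$ (the omitted set is null), the Borel null subsets of $\baire$ in $V$ as $\{N_\alpha : \alpha < \omega_1\}$, and $\{q \in \mathbb P : q \leq p\} = \{q_\beta : \beta < \omega_1\}$, and set $D_\beta := \bigcup_{i \in {\rm supp}(q_\beta)} {\rm dom}(q_\beta(i))$. Each ${\rm dom}(q_\beta(i))$ is Borel, being an injective Borel image of a Borel set by Lusin--Suslin, and is null by the definition of $\mathbb{C}_\Null$, so $D_\beta$ is Borel null. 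I then apply Lemma \ref{mainlemma} to $\vec N_0 := (N_\alpha)_\alpha$ and $\vec N_1 := (D_\beta)_\beta$, noting that for each $x \in \baire \setminus {\rm dom}(p)$ both $\{\alpha : x \notin N_\alpha\}$ and $\{\beta : x \notin D_\beta\}$ have size $\aleph_1$ (for the second set, the $q \leq p$ that add $x$ to no component's domain form an $\aleph_1$-sized cofinal subset). The lemma produces an enumeration $\langle (N_{a(\alpha)}, D_{b(\alpha)}) : \alpha < \omega_1 \rangle$ of all pairs in $\{N_\alpha\} \times \{D_\beta\}$ satisfying $x_\alpha \notin N_{a(\alpha)} \cup D_{b(\alpha)}$ for every $\alpha$.

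For each $\alpha$ I pick $q^*_\alpha \leq q_{b(\alpha)}$ and $y_\alpha \in V \cap \baire$ with $q^*_\alpha \forces \dot{g}(\check{x}_\alpha) = \check{y}_\alpha$, set $f(x_\alpha) := y_\alpha$, and extend $f$ arbitrarily to ${\rm dom}(p)$; this $f$ lies in $V$. To see $p \forces M$ non-null it suffices, by $\sigma$-closure, to show that for every Borel null $N \in V$ the set of $q \leq p$ forcing $M \nsubseteq \check N$ is dense below $p$. So given any $q'' \leq p$ and any such $N$, write $q'' = q_{\beta_0}$, $N = N_{\gamma_0}$, and use the bijectivity of the pair enumeration to find $\alpha^*$ with $(a(\alpha^*), b(\alpha^*)) = (\gamma_0, \beta_0)$: then $q^*_{\alpha^*} \leq q_{\beta_0} = q''$, $x_{\alpha^*} \notin N$, and $q^*_{\alpha^*}$ forces $\dot g(\check x_{\alpha^*}) = \check{f(x_{\alpha^*})}$, hence forces $\check x_{\alpha^*} \in M$; combining gives $q^*_{\alpha^*} \forces M \nsubseteq \check N$.

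The heart of the argument is the coordination provided by Lemma \ref{mainlemma}: for every pair $(N, q'')$ of a Borel null set and a condition one must simultaneously produce a real $x \notin N$ and an extension of $q''$ forcing $f(x) = \dot g(x)$. A naive construction controls only countably many values of $f$ below any given condition, yielding a null equality set (and hence giving no information about meeting-set nullity); the pair-enumeration trick of Lemma \ref{mainlemma} circumvents this by guaranteeing a witness for every $(N, q'')$ pair, converting the local countable deciding power into the global density statement that forces $M$ non-null below $p$.
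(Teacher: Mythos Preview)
Your proof is correct and follows essentially the same approach as the paper's: both arguments use Lemma \ref{mainlemma} to enumerate all pairs (Borel null set, condition) so that the $\alpha$-th real avoids both, then let the witness function $f$ send $x_\alpha$ to whatever value a strengthening of the $\alpha$-th condition decides for $\dot g(x_\alpha)$. The only cosmetic difference is that you fix a condition $p$ at the outset and work entirely below it (enumerating $\baire \setminus {\rm dom}(p)$ and conditions $\leq p$), producing an $f$ depending on $p$, whereas the paper constructs a single witness $h^*$ that works uniformly against all conditions; your explicit invocation of Lusin--Suslin to see that the coordinate domains are Borel is a small detail the paper leaves implicit.
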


\begin{proof}
I need to show that in $V[G]$ there is no $h:\baire \to \baire$ so that for all $f:\baire \to \baire$ in $V$ the set of $x$ for which $f(x) \neq^* h(x)$ is measure one. Thus suppose for a contradiction that there is a condition $p$ and a name $\dot{h}$ so that $p \forces \dot{h}: \check{\baire} \to \check{\baire}$ is such a function. I need to define in $V$ a function for which this fails. 

Note that (under $\CH$) $\Pi_I \mathbb{C}_\Null$ has size $\aleph_1$. For each condition $p \in  \Pi_I \mathbb{C}_\Null$, let $N^p$ be the union of the domains of the coodinate conditions. Since $ \Pi_I \mathbb{C}_\Null$ has countable support, it follows that $N^p$ is null. Now, using Lemma \ref{mainlemma} fix an enumeration $\langle (N_{0, \alpha}, p_\alpha) \; | \; \alpha < \omega_1 \rangle$ of all pairs where $N_{0, \alpha}$ ranges over the Borel null sets $N_\alpha$ and $p_\alpha$ is a condition in $\Pi_I \mathbb{C}_\Null$, and $x_\alpha \notin N_{0, \alpha} \cup N^{p_\alpha}$. For each $\alpha$, let $r_\alpha \leq p_\alpha$ decide $\dot{h}(x_\alpha)$. Say that $r_\alpha \forces \dot{h}(\check{x}_\alpha) = \check{y}_\alpha$ for some $y_\alpha$. Let $h^* :\baire \to \baire$ be the function (defined in $V$) so that $h^*(x_\alpha) = y_\alpha$ for all $\alpha$. Suppose that there is some Borel null set $N$ and some condition $p$ which forces that $\{x \; | \; \exists^\infty n \, \dot{h} (x) (n) = \check{h}^* (x) (n) \} \subseteq N$. Let $\alpha$ be such that $(N, p) = (N_{0, \alpha}, p_\alpha)$. Then $p_\alpha \forces \{x \; | \; \exists^\infty n \, \dot{h} (x) (n) = \check{h}^* (x) (n) \} \subseteq N_{0, \alpha}$. But $r_\alpha \leq p_\alpha$ forces that $\dot{h} (x_\alpha) = \check{h}^* (x_\alpha)$ and by the choice of enumeration we had that $x_\alpha \notin N_{0, \alpha}$, which is a contradiction.
\end{proof}

I'm now ready to prove Theorem \ref{mainthm2}. In fact it follows from the following theorem, which is just a more precise statement of what will be shown.
\begin{theorem}
Let $\kappa$ be a regular cardinal greater than $\aleph_2$ and let $\mathbb P_\kappa$ be the countable support product of $\mathbb{C}_\Null$. Then $\mathbb P_\kappa$ preserves cofinalities and cardinals and if $G\subseteq \mathbb P_\kappa$ is $V$-generic then in $V[G]$ $\mfc^+ = \aleph_2 = \mathfrak{b} (\neq^*_\Null) < \mfd (\neq^*_\Null) = 2^\mfc = \kappa$.
\end{theorem}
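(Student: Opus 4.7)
The plan is to establish four things in $V[G]$: (i) cofinalities and cardinals are preserved, (ii) $\CH$ persists so $\mfc^+=\aleph_2$, (iii) $\mfd(\neq^*_\Null)=\kappa=2^\mfc$, and (iv) $\mfb(\neq^*_\Null)=\aleph_2$. First I verify that $\mathbb P_\kappa$ is $\sigma$-closed: the coordinatewise union of a descending sequence of conditions is again a condition by the $\sigma$-closure of $\mathbb C_\Null$, and the support of the limit is a countable union of countable sets. Under $\GCH$ in $V$, $\mathbb P_\kappa$ further has the $\aleph_2$-c.c.\ by a standard $\Delta$-system argument: given $\aleph_2$ conditions the countable supports form a $\Delta$-system with root $\Delta$, and since $|\mathbb C_\Null|=\aleph_1$ with $\aleph_1^{\aleph_0}=\aleph_1$ under $\CH$, two conditions must agree on $\Delta$ and hence be compatible. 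Together these preserve all cofinalities and cardinals, and since no reals are added $\CH$ persists in $V[G]$, yielding (ii).

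For (iii), I compute $2^\mfc=2^{\aleph_1}$ in $V[G]$ by a nice-name count. A nice name for a subset of $\baire$ consists of at most $\aleph_1$-many antichains (one per real, and $\CH$ still holds), each of size $\leq\aleph_1$ by the $\aleph_2$-c.c., with every condition having countable support; thus each name uses at most $\aleph_1$ coordinates of the product. Hence there are at most $|\mathbb P_\kappa|^{\aleph_1}=\kappa^{\aleph_1}=\kappa$ such names under $\GCH$, while the $\kappa$ distinct coordinatewise generics furnish the matching lower bound.

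The separation between $\mfb$ and $\mfd$ rests on the same nice-name reflection: every $V[G]$-function $\baire\to\baire$ has a nice name using at most $\aleph_1$ coordinates, hence lies in $V[G\restriction I]$ for some $I\subseteq\kappa$ of size $\aleph_1$; more generally any family of $\lambda<\kappa$ such functions lies in $V[G\restriction J]$ for some $|J|<\kappa$. For $\mfd(\neq^*_\Null)\geq\kappa$, given such $A$ of size $\lambda<\kappa$ pick $j\in\kappa\setminus J$ and apply Lemma \ref{lemma1} over the intermediate model $V[G\restriction J]$ to the $\mathbb C_\Null$-generic $g_j$ added at coordinate $j$: no $V[G\restriction J]$-function, hence no $f\in A$, can $\neq^*_\Null$-dominate $g_j$, so $A$ is not $\neq^*_\Null$-dominating. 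Dually, for $\mfb(\neq^*_\Null)\leq\aleph_2$ consider $F=(\baire)^{\baire}\cap V$, of size $\aleph_2$ under $\GCH$ in $V$; any putative $\neq^*_\Null$-bound $g\in V[G]$ of $F$ lies in $V[G\restriction I]$ for some $|I|=\aleph_1$, and Lemma \ref{lemma2}, applied with $V$ as ground model and the countable support product indexed by $I$, says $V\cap(\baire)^{\baire}$ is $\neq^*_\Null$-unbounded in $V[G\restriction I]$, contradicting $g$. The matching lower bound $\mfb(\neq^*_\Null)\geq\aleph_2$ follows from preserved $\CH$: in $V[G]$ we have $\mfb(\neq^*)=non(\Null)=\mfc=\aleph_1$, so the Section 3 lemma producing $\mfc^+\leq\mfb(R_\mathcal I)$ from $\mfb(R)=non(\mathcal I)=\mfc$ gives $\aleph_2\leq\mfb(\neq^*_\Null)$.

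The principal obstacle is the reflection step — confirming that every $V[G]$-object of interest is captured by some $V[G\restriction I]$ with $|I|$ small enough to invoke Lemma \ref{lemma1} or Lemma \ref{lemma2}. This rests on the $\aleph_2$-c.c.\ together with the countable-support structure of the product; once in hand, each of the four clauses reduces to a direct application of $\CH$-arithmetic or of the two lemmas.
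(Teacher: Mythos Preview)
Your proposal is correct and follows essentially the same approach as the paper: $\sigma$-closure plus a $\Delta$-system argument for the $\aleph_2$-c.c., nice-name counting for $2^\mfc=\kappa$, reflection of any $<\kappa$-sized family to a small sub-product followed by Lemma~\ref{lemma1} for $\mfd(\neq^*_\Null)\geq\kappa$, and reflection of a putative bound to a sub-product of size $\aleph_1$ followed by Lemma~\ref{lemma2} for $\mfb(\neq^*_\Null)\leq\aleph_2$. Your treatment is in fact slightly more explicit than the paper's in one place: you separately argue the lower bound $\mfb(\neq^*_\Null)\geq\aleph_2$ via the Section~3 lemma (using that $\CH$ persists so $\mfb(\neq^*)=non(\Null)=\mfc$), whereas the paper leaves this implicit.
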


\begin{proof}
Fix $\kappa > \aleph_2$ regular, let $\mathbb P = \mathbb P_\kappa$ be the countable support product of $\mathbb{C}_\Null$ of length $\kappa$. Clearly $\mathbb P$ is $\sigma$-closed and a straightforward $\Delta$-system argument using $\GCH$ shows that it has the $\aleph_2$-c.c. It follows that all cardinals and cofinalities are preserved.

Also, for each $\alpha$ the $\alpha$-stage forcing $\mathbb P_\alpha$ adds a new function $g_\alpha:\baire \to \baire$ so in the extension $2^\mfc \geq \kappa$. A standard nice name counting argument, again using $\GCH$ shows that in fact $2^\mfc = \kappa$. 

It remains to show that $\aleph_2 = \mathfrak{b} (\neq^*_\Null)$ and $\mfd (\neq^*_\Null) = \kappa$. For the first of these, it suffices to see that $(\baire)^{\baire} \cap V$ is unbounded with respect to $\neq^*_\Null$. To see this, by Lemma \ref{lemma2}, it suffices to note that if $\dot{f}$ is a name for a function in $\bbb$ then $\dot{f}$ is equivalent to a $\Pi_I \mathbb{C}_\Null$ for $I$ an index set of size $\aleph_1$. This latter statement is proved as follows: let, for each $x \in \baire$ $\mathcal A_x$ be an antichain of conditions deciding $\dot{f}(\check{x})$ and note that the cardinality of the supports of the elements of $\bigcup_{x \in \baire} \mathcal A_x$ has size $\aleph_1$ by $\CH$ using the countable support of the product.

Finally for $\mfd (\neq^*_\Null) = \kappa$, suppose that $A \subseteq (\baire)^{\baire}$ of size $< \kappa$. It follows that $A$ must have been added by some initial stage of the iteration, the next stage of which killed the possibility that it was dominating by Lemma \ref{lemma1}.
\end{proof}

Let me reiterate that, defining $\mathbb{C}_\Me$ and $\mathbb{C}_\Kb$ in the obvious way the proofs can be repeated verbatim to obtain similar consistencies for the $\Me$ and $\Kb$ ideals. The same is true in the remaining subsections, though I won't explicitly say this again. An interesting open question though is the following.
\begin{question}
Are the forcing notions $\mathbb C_\Null$, $\mathbb C_\Me$ and $\mathbb C_\Kb$ forcing equivalent?
\end{question}

\subsection{Generalizing Hechler Forcing}

In this subsection I consider a generalization of Hechler forcing called $\mathbb D_\Null$ and look at two models obtained by iterating this forcing. First I consider the countable support iteration of $\mathbb D_\Null$ and then I look at a non-linear iteration of $\mathbb D_\Null$. The use of non-linear iterations of Hechler forcing (for $\baire$) originates in the work of Hechler in \cite{Hechler74} and was generalized by Cummings and Shelah in \cite{CuSh95} to accomodate $\kappa^\kappa$. Here I follow the same expositional flow as used in \cite{CuSh95}. Using this forcing I obtain the following consistency result.
\begin{theorem}
Let $\aleph_2< \kappa \leq \lambda$ with $\kappa$ and $\lambda$ regular. Then there is a forcing notion $\mathbb P_{\kappa, \lambda}$ which preserves cardinals and cofinalities such that if $G \subseteq \mathbb P_\kappa$ is generic then in $V[G]$ we have that $\mfb (\leq^*_\Null) = \kappa \leq \mfd (\leq^*) = 2^\mfc = \lambda$.
\label{mainthmhechler}
\end{theorem}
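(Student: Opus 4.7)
The plan is to mimic the previous subsection: first design a single-step forcing $\mathbb{D}_\Null$ that adds a function $g:\baire\to\baire$ which generically $\leq^*_\Null$-dominates every ground-model $f\in\bbb$, then iterate it along a carefully chosen partial order. A condition in $\mathbb{D}_\Null$ will be a pair $(s,F)$ where $s$ is a Borel partial function on $\baire$ with null domain taking values in $\baire^{<\omega}$ (the stem), and $F:\baire\to\baire$ is a Borel function (the commitment); set $(s',F')\leq(s,F)$ iff $s'\supseteq s$, $F'(x)(n)\geq F(x)(n)$ pointwise, and whenever $s'(x)(n)$ is defined but $s(x)(n)$ is not, $s'(x)(n)\geq F(x)(n)$. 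Exactly as for $\mathbb{C}_\Null$, countable unions of stems remain Borel with null domain and countable suprema of Borel commitments are Borel, so $\mathbb{D}_\Null$ is $\sigma$-closed, of size $\mfc$, hence has the $\mfc^+$-c.c., and it adds no reals or Borel sets. A density argument parallel to Lemma \ref{lemma1} shows that for every ground-model $f\in\bbb$ and every Borel null set $N$, any condition extends to one forcing $g(x)(n)\geq f(x)(n)$ for all $x\notin N$ and all $n$ past some threshold, so $f\leq^*_\Null\dot g$ is forced.

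For the non-linear iteration, fix a partial order $(I,<_I)$ of cardinality $\lambda$ with $\mfd(I)=\lambda$ and $\mfb(I)=\kappa$; for instance $I=\lambda\times\kappa$ with $(\alpha,\beta)<_I(\alpha',\beta')$ iff $\alpha=\alpha'$ and $\beta<\beta'$ works. Define $\mathbb{P}_{\kappa,\lambda}$ as the set of functions $p$ with countable support $\operatorname{supp}(p)\subseteq I$ assigning to each $i\in\operatorname{supp}(p)$ a condition $p(i)=(s^p_i,F^p_i)\in\mathbb{D}_\Null$, subject to the compatibility requirement that $F^p_j(x)(n)\leq F^p_i(x)(n)$ pointwise whenever $j<_I i$ both lie in $\operatorname{supp}(p)$; the ordering extends each coordinate while preserving these inequalities. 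As in the Cohen case $\sigma$-closure is routine, a $\Delta$-system argument using $\GCH$ yields the $\aleph_2$-c.c., so cardinals and cofinalities are preserved; a nice-name count shows $2^\mfc=\lambda$ in the extension, and the commitment structure guarantees that the projected generics $g_j$ satisfy $g_j\leq^*_\Null g_i$ whenever $j<_I i$.

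To read off the cardinals, observe that a cofinal $D\subseteq I$ of size $\lambda$ yields a $\leq^*_\Null$-dominating family $\{g_i:i\in D\}$: any name $\dot f$ for a function in $\bbb$ is determined by a countable subset of $I$, hence lies in an intermediate extension, and is then $\leq^*_\Null$-dominated by $g_i$ for any $i$ above that support, and therefore by any $g_d$ with $d\in D$ bounding $i$. For the lower bound on $\mfd$, a family $A\subseteq\bbb$ of size $<\lambda$ has names supported on some $J\subseteq I$ with $|J|<\lambda$; since $\mfd(I)=\lambda$, pick $i\in I$ with no element of $J$ above it, and argue via a Lemma \ref{lemma2}-style diagonalization (using the enumeration provided by Lemma \ref{mainlemma}) that $g_i$ is not $\leq^*_\Null$-dominated by any member of $A$. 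For $\mfb(\leq^*_\Null)=\kappa$: any family of size $<\kappa\leq\mfb(I)$ has support bounded in $I$ by some $i$, and the commitment condition then forces the whole family $\leq^*_\Null g_i$; conversely, a $<_I$-unbounded chain of length $\kappa$ produces an unbounded family by the same genericity argument.

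The main obstacle is the lower bound on $\mfd(\leq^*_\Null)$: showing that when $i$ is $I$-incomparable with (or above everything in) the support of a name $\tau$ for an element of $\bbb$, the generic $g_i$ genuinely escapes being $\leq^*_\Null$-dominated by $\tau[G]$. This requires a diagonalization, running through all pairs of Borel null sets and conditions in order type $\omega_1$, that exploits the freedom to define the stem of $g_i$ at $\omega_1$-many points outside any given null set so as to defeat every candidate dominator. Lemma \ref{mainlemma} is tailor-made for threading this enumeration, so the machinery is already in place and the remaining verification is the principal technical task; the other items (closure, chain condition, cofinality preservation, upper bound on $\mfd$, and $\mfb$-computation) are essentially routine generalizations of the Cohen iteration above.
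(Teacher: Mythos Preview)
Your proposal has the right overall shape but contains several genuine gaps.

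First, your choice of index order is wrong. With $I=\lambda\times\kappa$ ordered by $(\alpha,\beta)<_I(\alpha',\beta')$ iff $\alpha=\alpha'$ and $\beta<\beta'$, you obtain $\lambda$ pairwise incomparable copies of $\kappa$; any two elements from distinct columns have no common upper bound, so $\mfb(I)=2$, not $\kappa$. The paper uses instead $\mathbb Q=\kappa\times[\lambda]^{<\kappa}$ with $(\alpha,\tau)<(\beta,\sigma)$ iff $\alpha<\beta$ and $\tau\subsetneq\sigma$, which does satisfy $\mfb(\mathbb Q)=\kappa$ and $\mfd(\mathbb Q)=\lambda$.

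Second, your one-step forcing requires the commitment $F$ to be Borel. You therefore cannot strengthen a condition so as to commit above an \emph{arbitrary} ground-model $f\in\bbb$: for non-Borel $f$ there is no Borel $F'$ with $F'(x)(n)\geq f(x)(n)$ everywhere, so the density argument you sketch for $f\leq^*_\Null\dot g$ fails. The paper's $\mathbb D_\Null$ avoids this by taking the side part to be a \emph{countable set of arbitrary functions} $\mathcal F\subseteq\bbb$; any $f$ can then simply be adjoined to $\mathcal F$, and the $\mfc^+$-c.c.\ follows because conditions with the same stem are compatible.

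Third, in your product the compatibility clause $F^p_j\leq F^p_i$ does not yield $g_j\leq^*_\Null g_i$: both generics lie \emph{above} their respective commitments, which says nothing about their relative size. The paper handles this by a genuine non-linear \emph{iteration} along $\mathbb Q$ (following Hechler and Cummings--Shelah): at coordinate $b$ the conditions are $\mathbb D(\mathbb Q)_b$-names for elements of $\mathbb D_\Null$ whose side parts may include the earlier generics $f_G^a$ for $a<_{\mathbb Q}b$. The key lemma is then that $a\mapsto f_G^a$ is a \emph{cofinal embedding} of $\mathbb Q$ into $(\bbb,\leq^*_\Null)$, from which $\mfb(\leq^*_\Null)=\mfb(\mathbb Q)=\kappa$ and $\mfd(\leq^*_\Null)=\mfd(\mathbb Q)=\lambda$ follow directly by general facts about cofinal embeddings, with no need for the Lemma~\ref{mainlemma}-style diagonalization you propose for the lower bound on $\mfd$.
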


Similar to the last subsection I start by introducing the one step and studying its properties. As before, I work with the null ideal for definiteness but it's easy to see that the proofs adapt to the case of the other ideals.
\begin{definition}
The $\Null$-Hechler forcing $\mathbb{D}_\Null$ consists of the set of pairs $(p, \mathcal F)$ where $p \in \mathbb{C}_\Null$ and $\mathcal F$ is a countable set of functions $f:\baire \to \baire$. We let $(p, \mathcal F) \leq (q, \mathcal G)$ in case $p \supseteq q$, $\mathcal F \supseteq \mathcal G$ and for all $x \in {\rm dom}(p) \setminus {\rm dom}(q)$ and all $g \in \mathcal G$ $g(x) \leq^* p(x)$.
\end{definition}

If $d= (p_d, \mathcal F_d) \in \mathbb{D}_\Null$, let me call $p_d$ the {\em stem} of the condition and $\mathcal F_d$ the {\em side part}. The basic properties I will need for $\mathbb{D}_\Null$ are as follows.
\begin{proposition}
$\mathbb{D}_\Null$ is $\sigma$-closed and has the $\mfc^+$-c.c., thus assuming $\CH$, it preserves cofinalities and cardinals. Also, if $G \subseteq \mathbb D_\Null$ is $V$-generic then the union of $G$ is a function $g:\baire \to \baire$ so that for any $f:\baire \to \baire$ in the ground model the set of $x$ so that $g(x)$ does not eventually dominate $f(x)$ is null.
\end{proposition}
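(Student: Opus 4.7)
The plan is to prove the four claims in turn. For $\sigma$-closure, given a descending sequence $(p_n, \mathcal F_n)_{n < \omega}$, I would set $p = \bigcup_n p_n$ and $\mathcal F = \bigcup_n \mathcal F_n$. The stem $p$ is a condition in $\mathbb C_\Null$ by the $\sigma$-closure of $\mathbb C_\Null$ established in the previous subsection, while $\mathcal F$ is a countable union of countable sets, hence countable. To verify that $(p, \mathcal F)$ is below every $(p_N, \mathcal F_N)$, given $g \in \mathcal F$ pick $N$ with $g \in \mathcal F_N$; any $x \in {\rm dom}(p) \setminus {\rm dom}(p_N)$ lies in ${\rm dom}(p_m)$ for some $m > N$, and the ordering $(p_m, \mathcal F_m) \leq (p_N, \mathcal F_N)$ yields $g(x) \leq^* p_m(x) = p(x)$.

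For the $\mfc^+$-chain condition I would use a pigeonhole argument on stems rather than try to bound the size of the forcing itself: there are only $\mfc$ many Borel subsets of $(\baire)^2$, hence only $\mfc$ many possible stems. Given any collection of $\mfc^+$ conditions, $\mfc^+$ of them share a common stem $p$, and any two such conditions $(p, \mathcal F)$ and $(p, \mathcal F')$ are compatible via the common extension $(p, \mathcal F \cup \mathcal F')$. Combined with $\sigma$-closure this yields preservation of all cofinalities and cardinals under $\CH$.

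For the generic property I would argue by density. First, for each $f \in (\baire)^{\baire} \cap V$, the set of conditions with $f$ in the side part is dense: simply extend $(p, \mathcal F)$ to $(p, \mathcal F \cup \{f\})$. Second, for each $x \in \baire$, the set of conditions with $x \in {\rm dom}(p)$ is dense: given $(p, \mathcal F)$ with $x \notin {\rm dom}(p)$, pick $y \in \baire$ with $f(x) \leq^* y$ for every $f \in \mathcal F$ (possible since $\mathcal F$ is countable and $\aleph_1 \leq \mfb$) and extend to $(p \cup \{(x, y)\}, \mathcal F)$; the graph remains Borel and the domain remains null. These two density facts ensure the generic $g = \bigcup G$ is a total function from $\baire$ to $\baire$. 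Now fix $f \in V$ and pick a condition $(q, \mathcal G) \in G$ with $f \in \mathcal G$, setting $N_f = {\rm dom}(q)$. For any $x \notin N_f$, there is some $(q', \mathcal G') \in G$ extending $(q, \mathcal G)$ with $x \in {\rm dom}(q')$, and the ordering gives $g(x) = q'(x) \geq^* f(x)$. Hence $\{x : \neg f(x) \leq^* g(x)\} \subseteq N_f$, which is null.

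The main substantive step is the density lemma that one can always add a new point $x$ to the stem while honoring the promises recorded by the side part; this is where the countability of $\mathcal F$ together with $\aleph_1 \leq \mfb$ is essential. Everything else is modeled directly on the analogous verifications for $\mathbb C_\Null$ from the previous subsection.
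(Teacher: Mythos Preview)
Your proposal is correct and follows essentially the same approach as the paper: $\sigma$-closure via unions, the $\mfc^+$-c.c.\ via pigeonhole on stems (same-stem conditions are compatible by merging side parts), and the generic property via the two density arguments (adding $f$ to the side part, and adding a point $x$ to the stem using a $\leq^*$-bound on $\{f(x):f\in\mathcal F\}$). One small expository wrinkle: in the $\sigma$-closure check you should fix $N$ first and then take $g\in\mathcal F_N$ (rather than picking $g\in\mathcal F$ and then choosing $N$), but the underlying argument is the same.
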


\begin{proof}
That $\mathbb{D}_\Null$ is $\sigma$-closed is the same as the proof for $\mathbb{C}_\Null$. To see that it has the $\mfc^+$-c.c. it suffices to note that if two conditions have the same stem then they are compatible.

Now to see that $g$ is total is a simple density argument, noting that if $d$ is some condition and $x \notin {\rm dom}(p_d)$ then there is a $y$ dominating all of the $f(x)$ for $f \in \mathcal F_d$ since this set is countable and hence $(p_d \cup \{\langle x, y\rangle \}, \mathcal F_d)$ extends $d$ as needed. Moreover, if $f:\baire \to \baire$ is a function in the ground model and $d$ is any condition then clearly we can strengthen $d$, say to $d'$ so that $f$ is included in the side part $d'$. This strengthening forces, by the definition of the extension relation, that for all $x \notin {\rm dom}(p_{d'})$ $f(x) \leq^* \dot{g} (x)$. Since the domain of $p_{d'}$ was measure zero this proves the second part.
\end{proof}

\begin{remark}
While it's not used in any proof let me note that, unlike with $\mathbb C_\Null$ it is {\em not} the case that every condition in $\mathbb D_\Null$ can be extended to include any Borel null set in the domain of its stem. This is because given any uncountable Borel set, under $\CH$ one can use a simple diagonal argument to build a function which is not dominated by any Borel function on that set. What is true however, is that the stem of any condition can be extended to include any {\em countable} set.
\end{remark}

Let me now show what happens in the generic extension by a countable support iteration of $\mathbb{D}_\Null$.
\begin{theorem}
Let $\kappa$ be regular and let $\mathbb P_\kappa$ be the countable support iteration of $D_\Null$. If $G \subseteq \mathbb P_\kappa$ is $V$-generic then in $V[G]$ $\mfb (\leq^*_\Null) = \mfd (\neq_\Null^*) = \kappa = 2^\mfc$.
\label{mainthmhechler2}
\end{theorem}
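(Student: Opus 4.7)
The plan follows the template of Theorem \ref{mainthm2}: combine $\sigma$-closure and the $\aleph_2$-c.c.\ of the iteration, the specific generic-adding property of $\mathbb{D}_\Null$ from the preceding proposition, and support/reflection arguments over initial segments. The overall flow is essentially dual to the Cohen case: $\mfb(\leq^*_\Null) \geq \kappa$ will follow from the $\leq^*_\Null$-dominating property of the one-step generic together with a bookkeeping argument showing that any small family is captured by some $V[G_\alpha]$, and $\mfd(\neq^*_\Null) \geq \kappa$ will follow from a dual genericity argument.

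First I would handle cardinal arithmetic and the $\mfb$ equality. Since a countable support iteration of $\sigma$-closed forcings is $\sigma$-closed, $\mathbb{P}_\kappa$ adds no reals and preserves $\CH$; each iterand then has size $\aleph_1$, and a standard $\Delta$-system argument yields the $\aleph_2$-c.c., preserving cardinals and cofinalities. A nice-name count using $\GCH$ in $V$ gives $2^{\mfc} = \kappa$ in $V[G]$, matched by the $\kappa$ many generics. For $\mfb(\leq^*_\Null) \geq \kappa$: given $A \subseteq \bbb^{V[G]}$ of size $\lambda < \kappa$, each $f \in A$ has a name whose support has size at most $\aleph_1$, since $|\baire^{V[G]}| = \aleph_1$ and each of the $\aleph_1 \cdot \omega = \aleph_1$ natural-number values of $f$ is decided by an $\aleph_2$-c.c.\ antichain of countable-support conditions. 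The combined support of $A$ therefore has size $< \kappa$, and by regularity of $\kappa$ is bounded by some $\alpha < \kappa$, so $A \subseteq V[G_\alpha]$; the $\mathbb{D}_\Null$-generic $g_\alpha$ added at stage $\alpha$ is then a $\leq^*_\Null$-bound for all of $V[G_\alpha]$, hence for $A$, by the one-step proposition.

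The more delicate step is $\mfd(\neq^*_\Null) \geq \kappa$ (the upper bound $\leq 2^{\mfc} = \kappa$ being automatic). The plan is dual: given $A \subseteq V[G_\alpha]$ of size $<\kappa$, produce some $f \in V[G]$ for which $\{x : \exists^\infty n \; f(x)(n) = g(x)(n)\}$ is $\Null$-positive for every $g \in A$. The key auxiliary fact, established by a density argument analogous to the one in Lemma \ref{lemma2}, is that the $\mathbb{D}_\Null$-generic $g_\alpha$ is \emph{not} $\leq^*_\Null$-dominated from below by any $g \in V[G_\alpha]$: by extending a condition $(q,\mathcal G)$ to $(q, \mathcal G \cup \{g+1\})$ and then building any further extension, one forces $g_\alpha(x) \geq^* g(x)+1$, hence $g_\alpha(x) > g(x)$ cofinitely often, on a measure-one set of $x$. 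Exploiting this strict non-domination together with a fusion-type construction in $V[G_{\alpha+1}]$, one builds the required witness $f$ against the countably-represented family $A$. The main obstacle is precisely this last construction: the Cohen-style density argument of Lemma \ref{lemma1} does not transfer verbatim, because the side-part constraint $p(x) \geq^* g'(x)$ for $g' \in \mathcal G$ in a $\mathbb{D}_\Null$-condition actively obstructs any attempt to force an equality $p(x)(n) = g(x)(n)$ at new $x$'s; circumventing this obstruction, by a careful fusion exploiting $\sigma$-closure and the countability of the side-part, is the technical heart of the argument.
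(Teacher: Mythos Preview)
Your treatment of cardinal preservation, $2^{\mfc}=\kappa$, and $\mfb(\leq^*_\Null)=\kappa$ is essentially the same as the paper's: small families are absorbed by an initial segment and then bounded by the next $\mathbb D_\Null$-generic.

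The gap is in your argument for $\mfd(\neq^*_\Null)\geq\kappa$. You correctly diagnose the obstruction: once a side part $\mathcal G$ is present, any extension of the stem at a new point $x$ must satisfy $p(x)\geq^* g'(x)$ for all $g'\in\mathcal G$, so you cannot freely force equalities $p(x)(n)=g(x)(n)$ for a prescribed ground-model $g$. But you do not actually overcome this obstruction; you merely name ``a careful fusion exploiting $\sigma$-closure and the countability of the side-part'' as the technical heart, without indicating what it would do. Your ``key auxiliary fact'' that $g_\alpha(x)>g(x)$ cofinitely often on a measure-one set is just the dominating property of the one-step generic again and gives no control over \emph{equalities}. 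There is no evident way to manufacture, from the $\mathbb D_\Null$-generic alone, a function $f$ with $\{x:\exists^\infty n\ f(x)(n)=g(x)(n)\}$ positive for all $g\in A$; the forcing is designed to produce large values, not to hit prescribed ones, and the stem is not $\mathbb C_\Null$-generic (the obvious map $(p,\mathcal F)\mapsto p$ is not a projection, precisely because of the side-part constraint you identify).

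The paper avoids this problem entirely by a different and much shorter route: a countable support iteration of nontrivial $\sigma$-closed forcings adds an $\mathrm{Add}(\omega_1,1)$-generic at every limit stage of cofinality $\omega_1$. Given $A\subseteq\bbb$ of size ${<}\kappa$, pass to an initial stage containing $A$ and take such an $\mathrm{Add}(\omega_1,1)$-generic $g:\omega_1\to\omega_1$ appearing later; via the fixed enumeration $\{x_\alpha:\alpha<\omega_1\}$ of $\baire$ this is a function $\hat g\in\bbb$. Now the Cohen-style density argument of Lemma~\ref{lemma1} goes through verbatim for $\hat g$ (there are no side conditions to obstruct it): for any $f\in A$ and any Borel null $N$ one finds $x\notin N$ with $\hat g(x)=f(x)$, so $\{x:\hat g(x)=f(x)\}$ is not null and $A$ is not $\neq^*_\Null$-dominating. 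In short, the witness against domination is not the Hechler-type generic at all, but a genuine Cohen-type generic that the iteration produces for free at suitable limit stages.
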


\begin{proof}
Let $\kappa > \aleph_2$ be regular and let $\mathbb P_\kappa$ be the countable support iteration of length $\kappa$ of $\mathbb{D}_\kappa$. That cardinals and cofinalities are preserved follows as for $\mathbb{C}_\Null$. Also, every set $A \subseteq \bbb$ of size less than $\kappa$ is added by some initial stage, after which a function bounding $A$ was added so $\mfb (\leq^*_\Null) = \kappa$. Moreover a nice name argument easily gives that $2^\mfc = \kappa$.

It remains to see that $\mfd (\neq_\Null^*) = \kappa$ in this model. For this, I use the fact that countable support iterations always add a generic for $Add (\omega_1, 1)$ at limit stages of cofinality $\omega_1$. Now, given any function $f:\omega_1 \to \omega_1$ we can think of it as a function $\hat{f}$ from $\baire$ to $\baire$ by letting $\hat{f}(x_\alpha) = x_\beta$ just in case $f(\alpha) = \beta$. Suppose $A \subseteq \bbb$ is a set of size less than $\kappa$. It must have been added by some initial stage of the iteration $\mathbb P_\kappa$ and therefore there is a later stage which adds an $Add (\omega_1, 1)$-generic function $g:\omega_1 \to \omega_1$. By density, given any $f:\baire \to \baire$ in $A$ and any Borel null set $N$ we can find an $x \notin N$ so that $\hat{g}(x) = f(x)$ and therefore, for any $f \in A$ the set of $x$ for which $\hat{g}(x) = f(x)$ is not null. Therefore in particular $A$ is not a $\neq^*_\Null$-dominating family. Thus $\mfd (\neq^*_\Null) = \kappa$.
\end{proof} 

I'm now ready to prove Theorem \ref{mainthmhechler}. This uses a version of the iteration discussed in Section 3 of \cite{CuSh95}, it being based on the original non-linear iteration of Hechler in \cite{Hechler74}. Let me recall the basics of what I need. Fix $\kappa < \lambda$ regular cardinals greater than $\aleph_2$ and let $\mathbb{Q} = (Q, \leq_\mathbb Q)$ be a well founded partial order so that $\mfb(\mathbb Q) = \kappa$ and $\mfd (\mathbb Q) = \lambda$. For example, under $\GCH$, $\kappa \times [\lambda]^{< \kappa}$ ordered by $(\alpha, \tau) \leq (\beta, \sigma)$ if and only if $\alpha < \beta$ and $\tau \subseteq \sigma$ is such an order, see Lemma 2 of \cite{CuSh95} for a proof. I need to define a $\sigma$-closed, $\aleph_2$-c.c. forcing notion $\mathbb{D} (\mathbb Q)$ so that forcing with this partial order adds a cofinal embedding of $\mathbb{Q}$ into $(\bbb, \leq^*_\Null)$. If I can do this, then by Lemmas 3 and 5 of \cite{CuSh95} it follows that in the extension by this forcing notion $\mfb (\leq^*_\Null) = \kappa$ and $\mfd (\leq^*_\Null) = \lambda$. For completeness, here are the cited lemmas.

\begin{lemma}[Lemma 3 of \cite{CuSh95}]
If $\mathbb P$ and $\mathbb Q$ are partially ordered sets and $\mathbb P$ embeds cofinally into $\mathbb Q$ then $\mfb(\mathbb P) = \mfb (\mathbb Q)$ and $\mfd (\mathbb P) = \mfd (\mathbb Q)$.
\end{lemma}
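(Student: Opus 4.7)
The plan is to establish the two equalities by proving all four inequalities $\mfb(\mathbb P) \leq \mfb(\mathbb Q)$, $\mfb(\mathbb Q) \leq \mfb(\mathbb P)$, $\mfd(\mathbb P) \leq \mfd(\mathbb Q)$, $\mfd(\mathbb Q) \leq \mfd(\mathbb P)$ via a short back-and-forth transfer of witnessing families along the cofinal embedding $\varphi:\mathbb P \to \mathbb Q$. Throughout, I will use the two defining features of a cofinal embedding: that $\varphi$ is order-preserving and order-reflecting, so that $p \leq_{\mathbb P} p'$ iff $\varphi(p) \leq_{\mathbb Q} \varphi(p')$, and that $\varphi(\mathbb P)$ is cofinal in $\mathbb Q$, so that every $q \in \mathbb Q$ admits some $p \in \mathbb P$ with $q \leq_{\mathbb Q} \varphi(p)$.

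For the dominating side, given a minimal dominating family $D \subseteq \mathbb Q$, I would use cofinality to pick, for each $d \in D$, some $p_d \in \mathbb P$ with $d \leq_\mathbb Q \varphi(p_d)$; then $\{p_d : d \in D\}$ dominates $\mathbb P$, since for any $p \in \mathbb P$ some $d \in D$ dominates $\varphi(p)$, and order-reflection turns $\varphi(p) \leq_\mathbb Q \varphi(p_d)$ into $p \leq_\mathbb P p_d$. This gives $\mfd(\mathbb P) \leq \mfd(\mathbb Q)$. Conversely, for a minimal dominating $D' \subseteq \mathbb P$, the image $\varphi(D')$ dominates $\mathbb Q$: for $q \in \mathbb Q$, pick $p \in \mathbb P$ with $q \leq_\mathbb Q \varphi(p)$ by cofinality, then $d \in D'$ with $p \leq_\mathbb P d$, and conclude $q \leq_\mathbb Q \varphi(d)$.

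For the bounding side, if $B \subseteq \mathbb P$ is unbounded of minimal size, then $\varphi(B)$ is unbounded in $\mathbb Q$: a purported bound $q$ would lie below some $\varphi(p)$ by cofinality, and order-reflection would make $p$ a $\leq_\mathbb P$-bound for $B$, contradicting unboundedness. Conversely, if $B \subseteq \mathbb Q$ is unbounded of minimal size, fix for each $b \in B$ some $p_b \in \mathbb P$ with $b \leq_\mathbb Q \varphi(p_b)$; any common $\leq_\mathbb P$-bound $p$ for $\{p_b : b \in B\}$ would yield $\varphi(p)$ as a common $\leq_\mathbb Q$-bound for $B$, again a contradiction.

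I do not anticipate any real obstacle here: the argument is a routine diagram chase whose content is simply that a cofinal embedding identifies the relevant tail structure of the two orders. The only point worth flagging is that one must interpret \emph{embedding} in the strong sense that $\varphi$ both preserves and reflects $\leq$, not merely preserves it; with that convention in hand, each of the four implications collapses to a one-line verification.
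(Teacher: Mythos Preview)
Your proof is correct and is precisely the standard argument. Note, however, that the paper does not actually prove this lemma: it is quoted verbatim from \cite{CuSh95} (``For completeness, here are the cited lemmas'') and used as a black box, so there is no in-paper proof to compare against. Your flagged caveat about needing $\varphi$ to reflect order, not merely preserve it, is exactly the right point to make.
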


\begin{lemma}[Lemma 5 of \cite{CuSh95}]
Suppose $\mathbb P$ is a partial order with $\mfb (\mathbb P) = \beta$ and $\mfd (\mathbb P) = \delta$.
\begin{enumerate}
\item
If $V[G]$ is a generic extension of $V$ so that every set of ordinals of size less than $\beta$ in $V[G]$ is covered by a set of ordinals of size less than $\beta$ in $V$ then $V[G] \models \mfb (\mathbb P) = \beta$.
\item
If $V[G]$ is a generic extension of $V$ so that every set of ordinals of size less than $\delta$ in $V[G]$ is covered by a set of ordinals of size less than $\delta$ in $V$ then $V[G] \models \mfd (\mathbb P) = \delta$.
\end{enumerate}

\end{lemma}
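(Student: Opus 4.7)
The plan is to prove both parts by the same dual strategy, relying throughout on the fact that $\mathbb P$ is a fixed ground model partial order whose order relation is absolute, so $\mathbb P^V = \mathbb P^{V[G]}$ as ordered sets. In each part the upper bound on the relevant invariant will be immediate from this absoluteness, while the lower bound will invoke the covering hypothesis to reduce a hypothetical small witness in $V[G]$ to a ground model set of the same size range, contradicting the assumed value of $\mfb(\mathbb P)$ or $\mfd(\mathbb P)$ in $V$.

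For part (1), the upper bound $\mfb(\mathbb P)^{V[G]} \leq \beta$ follows from the observation that any unbounded family $A \subseteq \mathbb P$ of size $\beta$ witnessing $\mfb(\mathbb P) = \beta$ in $V$ remains unbounded in $V[G]$, since $\mathbb P$ acquires no new candidate bounds. For the lower bound, suppose towards a contradiction that $A \in V[G]$ were an unbounded family in $\mathbb P$ of size $\kappa < \beta$. Coding $A$ as a set of ordinals via a ground model wellorder of $\mathbb P$, the covering hypothesis produces $B \in V$ with $A \subseteq B$ and $|B|^V < \beta = \mfb(\mathbb P)^V$. In $V$, the set $B$ thus has a $\leq_{\mathbb P}$-bound $p$, and this $p$ witnesses that $A$ is bounded in $V[G]$, a contradiction.

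Part (2) proceeds dually. The upper bound $\mfd(\mathbb P)^{V[G]} \leq \delta$ is immediate: a $V$-dominating family $D \subseteq \mathbb P$ of size $\delta$ remains dominating in $V[G]$ because the collection of elements to be dominated is still $\mathbb P$, and the order is absolute. For the lower bound, suppose $D \in V[G]$ were $\leq_{\mathbb P}$-dominating of size less than $\delta$. Covering gives $E \in V$ with $D \subseteq E$ and $|E|^V < \delta = \mfd(\mathbb P)^V$, so some $p \in \mathbb P$ witnesses in $V$ that $E$ is not dominating; the same $p$ then witnesses in $V[G]$ that $D \subseteq E$ is not dominating either.

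The only delicate point, really a bookkeeping matter rather than a genuine obstacle, is the initial identification of subsets of $\mathbb P$ with sets of ordinals so that the covering hypothesis, which is phrased for sets of ordinals, can be applied. This presents no issue in the cases of interest, where $\mathbb P$ (such as $\kappa \times [\lambda]^{<\kappa}$) admits a canonical ground model wellorder in terms of which subsets of $\mathbb P$ transparently correspond to sets of ordinals, so a covering set of ordinals can be pulled back to a covering subset of $\mathbb P$ of the desired cardinality.
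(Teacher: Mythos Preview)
Your argument is correct and is exactly the standard proof of this preservation lemma. The paper, however, does not supply its own proof: the statement is quoted from \cite{CuSh95} (Lemma~5 there) and included only ``for completeness'', so there is no paper-side argument to compare against.
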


Let me define the forcing notion I need. In what follows, if $a \in \mathbb Q$ let $\mathbb Q \hook a = \{b \in \mathbb Q \; | \; b < a\}$. Similarly if $p$ is a function with domain contained in $\mathbb Q$, let $p \hook  a$ be the restriction of $p$ to $\mathbb Q \hook a$.
\begin{definition}
Let $\mathbb{Q}_{top}$ be $\mathbb{Q}$ with the addition of a top element, $top$, greater than all other elements. For each $a \in \mathbb Q_{top}$ define inductively a forcing notion $\mathbb{D}(\mathbb Q)_a$ to be the set of functions $p$ with ${\rm dom}(p) \subseteq\mathbb Q \hook a$ countable and for each $b \in {\rm dom}(p)$ $p(b)$ is a $\mathbb{D} (\mathbb Q)_b$-name for an element of $\mathbb{D}_\Null$ of the form $(\check{p}, \dot{\mathcal F})$. Let $p \leq_{\mathbb{D} (\mathbb Q)_a} q$ if and only if $p \supseteq q$ and for every $b \in {\rm dom}(q)$ we have that $p \hook b \forces_{\mathbb D (\mathbb Q)_b} p(b) \leq_{\mathbb{D}_\Null} q(b)$. Finally we let $\mathbb{D} (\mathbb Q) = \mathbb D(\mathbb Q)_{top}$.
\end{definition}

\begin{remark}
Below I show that $\mathbb D (\mathbb Q)_a$ is $\sigma$-closed for each $a \in \mathbb Q$. It follows that there is no loss in generality in insisting that for all $b$ the name for the stem of $p(b)$ is a check name, since the latter is always coded by a real and hence the set of conditions like this is dense.
\end{remark}

\begin{lemma}
For every $a \in \mathbb Q$, the partial order $\mathbb{D} (\mathbb Q)_a$ is $\sigma$-closed, and under $\GCH$, has the $\aleph_2$-c.c.. 
\end{lemma}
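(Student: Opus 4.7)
The plan is to prove both claims simultaneously by induction on the rank of $a$ in the well-founded order $\mathbb Q$ (extended by a single top element where necessary).

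For $\sigma$-closure, I would take a descending sequence $p_0 \geq p_1 \geq \cdots$ in $\mathbb D(\mathbb Q)_a$ and define $p$ with ${\rm dom}(p) = \bigcup_n {\rm dom}(p_n)$, still a countable subset of $\mathbb Q \hook a$. For each $b$ in this union, let $n_b$ be least with $b \in {\rm dom}(p_n)$; by the definition of the order on $\mathbb D(\mathbb Q)_a$, $p_{n+1}\hook b$ forces $\langle p_n(b) : n \geq n_b\rangle$ to be descending in $\mathbb D_\Null$. By the inductive hypothesis $\mathbb D(\mathbb Q)_b$ is $\sigma$-closed and hence adds no reals, so $\mathbb D_\Null$ remains $\sigma$-closed in the generic extension. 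Mimicking the earlier proof of $\sigma$-closure for $\mathbb D_\Null$, a common lower bound can be taken with stem the union of the (Borel, ground-model) stems of the $p_n(b)$; this yields a name $p(b)$ of the required form $(\check q, \dot{\mathcal F})$, and these names assemble into the desired lower bound $p \leq p_n$ for every $n$.

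For the $\aleph_2$-c.c.\ under $\GCH$, I would take any family $\{p_\alpha : \alpha < \aleph_2\} \subseteq \mathbb D(\mathbb Q)_a$ and apply the standard $\Delta$-system lemma to the countable domains to obtain an $\aleph_2$-sized subfamily whose domains intersect pairwise in a fixed countable root $r$. I would then thin once more to an $\aleph_2$-sized subfamily on which $p_\alpha \hook r$ is literally the same function. Granting this thinning, any two conditions in the final family are compatible, their union being a legitimate element of $\mathbb D(\mathbb Q)_a$: domains are disjoint off $r$ and the names on $r$ agree on the nose.

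The main obstacle is justifying the second thinning, which requires at most $\aleph_1$ many possible restrictions $p_\alpha \hook r$. I would handle this inside the same induction: under $\GCH$, for each $b$ of rank less than that of $a$ the set of nice $\mathbb D(\mathbb Q)_b$-names of the form $(\check q, \dot{\mathcal F})$ has cardinality $\aleph_1$, since the stem $\check q$ ranges over the ground-model $\mathbb D_\Null$, of size $\mfc = \aleph_1$ under $\CH$, while $\dot{\mathcal F}$ is a name for a countable set and, by the inductive $\aleph_2$-c.c.\ of $\mathbb D(\mathbb Q)_b$ together with the $\sigma$-closure (so no new reals are created), such nice names can be coded by countably many maximal antichains of size $\leq \aleph_1$, giving only $\aleph_1$ many. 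Combining across $b \in r$ yields $\aleph_1^{\aleph_0} = \aleph_1$ possibilities for $p_\alpha \hook r$, which suffices.
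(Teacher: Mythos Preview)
Your $\sigma$-closure argument is fine and matches the paper's.

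The $\aleph_2$-c.c.\ argument has a genuine gap in the second thinning. You claim there are only $\aleph_1$ many nice $\mathbb D(\mathbb Q)_b$-names of the form $(\check q,\dot{\mathcal F})$, arguing that $\dot{\mathcal F}$, being a name for a countable set, can be coded by countably many antichains of size $\le\aleph_1$. But $\dot{\mathcal F}$ is a name for a countable subset of $\bbb$, not of $\baire$. Under $\GCH$ we have $|\bbb|=\aleph_1^{\aleph_1}=\aleph_2$, so even the check names $\check{\mathcal F}$ for ground-model $\mathcal F\in[\bbb]^{\le\aleph_0}$ already give $\aleph_2$ many distinct names, and a nice name for a single element of $\bbb$ requires $\aleph_1$ many antichains, not countably many. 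Hence you cannot reduce to $\aleph_1$ many possible restrictions $p_\alpha\hook r$, and the thinning fails.

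The paper's argument avoids this by exploiting the particular structure of $\mathbb D_\Null$: two conditions with the \emph{same stem} are always compatible (take the union of the side parts). So after the $\Delta$-system step with root $B$, one only needs to thin so that the sequence of stems $\langle \check q_b : b\in B\rangle$ agrees; since each stem lies in $\mathbb C_\Null$, of size $\mfc=\aleph_1$ under $\CH$, there are only $\aleph_1^{\aleph_0}=\aleph_1$ such sequences. Two conditions $p_\alpha,p_\beta$ in the thinned family are then compatible: on $B$ take the common stem with the union of the two side-part names, and outside $B$ the domains are disjoint, so take the union. The key point you are missing is that the side parts need not match at all for compatibility.
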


\begin{proof}
Fix $a \in \mathbb Q$. Suppose that $p_0 \geq p_1 \geq ...\geq p_n \geq ...$ is a decreasing sequence of elements in $\mathbb D(\mathbb Q)_a$. Let $p$ be defined as the function whose domain is the union of the domains of all of the $p_n$'s and so that for each $b \in {\rm dom} (p)$ we let $p(b)$ name a lower bound on the set of $\{p_n (b) \;  |\; n < \omega \, {\rm and} \, b \in {\rm dom} (p_n)\}$. Since $\mathbb D_\Null$ is $\sigma$-closed such a name exists. Clearly $p$ is a lower bound on the sequence so $\mathbb D (\mathbb Q)_a$ is $\sigma$-closed.

To see that $\mathbb D (\mathbb Q)_a$ has the $\aleph_2$-c.c., suppose that $A = \{p_\alpha \; | \; \alpha < \omega_2\}$ is a set of conditions. Applying the $\Delta$-system lemma (by $\GCH$) we can thin out $A$ to a $\Delta$-system so that any two conditions' domains coincide on some countable set $B \subseteq \mathbb Q \hook a$. But now, since each name for the stem in $p(b)$ is a check name, the set of all possible sequences of stems on the coordinates in $B$ has size $\omega_1^\omega = \omega_1$ (using $\CH$). Thus $A$ contains $\omega_2$ many conditions so that on $B$ the stems agree, and each such condition's overlapping domains is $B$. But this means that those conditions are all compatible.
\end{proof}

The next lemma is entirely straightforward to verify.
\begin{lemma}
Suppose $a < b \in \mathbb Q_{top}$ then $\mathbb D (\mathbb Q)_a$ completely embeds into $\mathbb D(\mathbb Q)_b$ and the map $\pi:\mathbb D (\mathbb Q)_b \to \mathbb D(\mathbb Q)_a$ defined by $\pi (p) = p\hook a$ is a projection.
\label{projection}
\end{lemma}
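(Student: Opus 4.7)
The plan is to verify both claims simultaneously by transfinite induction along the well-founded order $\mathbb Q_{top}$ on the parameter $b$, essentially just unwinding the recursive definition of $\mathbb D(\mathbb Q)_b$.

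First I would establish the set-theoretic inclusion $\mathbb D(\mathbb Q)_a \subseteq \mathbb D(\mathbb Q)_b$. Since $a < b$ we have $\mathbb Q \hook a \subseteq \mathbb Q \hook b$, so a condition $p$ in $\mathbb D(\mathbb Q)_a$ already has countable domain contained in $\mathbb Q \hook b$. For each $c \in {\rm dom}(p)$, the value $p(c)$ is by definition a $\mathbb D(\mathbb Q)_c$-name of the required form, and since $c < a < b$ the inductive complete embedding $\mathbb D(\mathbb Q)_c \hookrightarrow \mathbb D(\mathbb Q)_b$ allows us to view $p(c)$ as a $\mathbb D(\mathbb Q)_b$-name. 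The inductive hypothesis also guarantees that a forcing statement of the form $p \hook c \forces p(c) \leq_{\mathbb D_\Null} q(c)$ is absolute between $\mathbb D(\mathbb Q)_c$, $\mathbb D(\mathbb Q)_a$, and $\mathbb D(\mathbb Q)_b$, so the extension relation of $\mathbb D(\mathbb Q)_a$ really is the restriction of the extension relation of $\mathbb D(\mathbb Q)_b$.

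Next I would check that $\pi(p) = p \hook a$ is order-preserving: if $p \leq p'$ in $\mathbb D(\mathbb Q)_b$ then for every $c \in {\rm dom}(p') \cap (\mathbb Q \hook a)$ we have $c < a$, so $p \hook c = (p \hook a) \hook c$ and $p(c) = (p \hook a)(c)$ (similarly for $p'$), and the forced inequality at $c$ immediately witnesses $\pi(p) \leq \pi(p')$ in $\mathbb D(\mathbb Q)_a$. Trivially $\pi$ sends the empty condition to itself. The substantive bit is the projection lifting: given any $p \in \mathbb D(\mathbb Q)_b$ and any $q \leq_{\mathbb D(\mathbb Q)_a} \pi(p)$, define
\[
p^*(c) = \begin{cases} q(c) & \text{if } c \in {\rm dom}(q), \\ p(c) & \text{if } c \in {\rm dom}(p) \setminus (\mathbb Q \hook a). \end{cases}
\]
Then $p^*$ has countable domain inside $\mathbb Q \hook b$; each value is a name of the correct form via the inductive embeddings; and one verifies $p^* \leq_{\mathbb D(\mathbb Q)_b} p$ by splitting into the two cases above: on coordinates below $a$ the required forcing relation is exactly $q \leq \pi(p)$ transferred via the inductive absoluteness, and on coordinates outside $\mathbb Q \hook a$ the two conditions literally agree. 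By construction $\pi(p^*) = q$, giving the projection property. Complete embedding of $\mathbb D(\mathbb Q)_a$ into $\mathbb D(\mathbb Q)_b$ is then a standard consequence: given a maximal antichain $A \subseteq \mathbb D(\mathbb Q)_a$ and any $r \in \mathbb D(\mathbb Q)_b$, pick $s \in A$ compatible with $\pi(r)$ in $\mathbb D(\mathbb Q)_a$, take a common extension $q$ there, and use the projection property to find $r^* \leq r$ with $\pi(r^*) = q \leq s$, so $r^*$ witnesses compatibility of $s$ with $r$.

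The one subtle point, and essentially the only reason this is not completely transparent from the definitions, is maintaining coherence between the tower of embeddings $\mathbb D(\mathbb Q)_c \hookrightarrow \mathbb D(\mathbb Q)_a \hookrightarrow \mathbb D(\mathbb Q)_b$ as $c$ ranges over ${\rm dom}(p)$. Since the chain has length bounded by the rank of $b$ in the well-founded order $\mathbb Q_{top}$, and each pairwise embedding is supplied by the inductive hypothesis applied at a strictly smaller parameter, routine bookkeeping suffices; this is precisely why the definition of $\mathbb D(\mathbb Q)_a$ is set up as an induction along $\mathbb Q_{top}$ to begin with.
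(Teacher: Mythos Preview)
Your argument is correct, and the paper itself offers no proof at all---it simply declares the lemma ``entirely straightforward to verify''---so you are filling in exactly what the author left out.

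One remark that would streamline your write-up: the concern about translating names and maintaining coherence of the tower of embeddings is unnecessary here, because of how the definition is set up. In both $\mathbb D(\mathbb Q)_a$ and $\mathbb D(\mathbb Q)_b$, the value $p(c)$ at a coordinate $c$ is required to be a $\mathbb D(\mathbb Q)_c$-name (not a $\mathbb D(\mathbb Q)_a$- or $\mathbb D(\mathbb Q)_b$-name), and the extension relation is phrased entirely in terms of $\mathbb D(\mathbb Q)_c$-forcing for $c \in {\rm dom}(q)$. Hence $\mathbb D(\mathbb Q)_a$ is a literal subset of $\mathbb D(\mathbb Q)_b$ with the induced order, and no inductive absoluteness argument is needed for the inclusion or for the order-preservation of $\pi$. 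The only place the induction genuinely enters is in checking that your amalgam $p^*$ is a condition and that $p^* \leq p$ at coordinates $c \nless a$, where one needs $p^* \hook c \in \mathbb D(\mathbb Q)_c$; but since $p^*(c) = p(c)$ there the forced inequality is trivial anyway. With this simplification your verification of the projection property and the deduction of complete embedding go through exactly as written.
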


Now we get to the heart of the matter. Let $G \subseteq\mathbb{D} (\mathbb Q)$ be generic over $V$. For each $a \in \mathbb Q$ let $f_G^a : \baire \to \baire$ be the $\mathbb{D}_\Null$-generic function added by the $a^{\rm th}$ coordinate i.e. $f_G^a = \bigcup_{p \in G} \{ p (a)_0\}$

\begin{lemma}
The map $a\mapsto f_G^a$ is a cofinal mapping of $\mathbb{Q}$ into $(\bbb, \leq^*_\Null)$.
\label{cof}
\end{lemma}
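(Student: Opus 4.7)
The plan is to verify two things: (i) order-preservation, i.e.\ that $a <_{\mathbb Q} b$ implies $f^a_G \leq^*_\Null f^b_G$, and (ii) cofinality, i.e.\ that for every $h \in (\baire)^{\baire} \cap V[G]$ there exists $a \in \mathbb Q$ with $h \leq^*_\Null f^a_G$. Both parts will rest on the following density fact, which I would establish first: whenever $\dot k$ is a $\mathbb D(\mathbb Q)_c$-name for an element of $\bbb$ and $c <_{\mathbb Q_{top}} a$, any condition $p \in \mathbb D(\mathbb Q)$ with $a \in {\rm dom}(p)$ can be extended to some $p' \leq p$ with $p'(a) = (\check p_0, \dot{\mathcal F})$ and $p' \hook a \forces \dot k \in \dot{\mathcal F}$. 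This is immediate from Lemma \ref{projection}: $\mathbb D(\mathbb Q)_c$ completely embeds into $\mathbb D(\mathbb Q)_a$, so $\dot k$ is (equivalent to) a $\mathbb D(\mathbb Q)_a$-name and can simply be thrown into the countable side-part at coordinate $a$.

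For (i), fix $a <_{\mathbb Q} b$ and let $\dot f^a$ denote the canonical $\mathbb D(\mathbb Q)_b$-name for $f^a_G$. Apply the density fact to obtain some $p \in G$ with $p(b) = (\check p_0, \dot{\mathcal F})$ and $p \hook b \forces \dot f^a \in \dot{\mathcal F}$. Let $N = {\rm dom}(p_0)$, a Borel null set. For any $x \in \baire \setminus N$, the density argument already used to show $f^b_G$ is total---iteratively extending the stem of the $b$-coordinate by a single point whose value dominates each of the countably many functions in the side part---produces some $q \leq p$ in $G$ with $x$ in the domain of the stem of $q(b)$. By the definition of $\leq_{\mathbb D_\Null}$ combined with $\dot f^a$ being in the side part of $p(b)$, this forces $f^a_G(x) \leq^* f^b_G(x)$. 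Hence $\{x : f^a_G(x) \leq^* f^b_G(x)\} \supseteq \baire \setminus N$ is co-null, giving $f^a_G \leq^*_\Null f^b_G$.

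For (ii), let $\dot h$ be a nice $\mathbb D(\mathbb Q)$-name for an element of $\bbb$. Using the $\aleph_2$-c.c., the countable support of conditions in $\mathbb Q$, and $\CH$ (to count $|\baire| = \aleph_1$ many antichains, each of size $\leq \aleph_1$), the name $\dot h$ can be arranged to mention only coordinates from some set $X \subseteq \mathbb Q$ with $|X| \leq \aleph_1$. Since $|X| < \kappa = \mfb(\mathbb Q)$, $X$ is $\leq_{\mathbb Q}$-bounded, and since $\mathbb Q$ has no maximum (as holds for the standard example $\kappa \times [\lambda]^{<\kappa}$), I may pick $a \in \mathbb Q$ strictly above every element of $X$. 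Then $\dot h$ is (equivalent to) a $\mathbb D(\mathbb Q)_a$-name, and the argument of (i) with $\dot h$ in the role of $\dot f^a$ and $a$ in the role of $b$ shows $h \leq^*_\Null f^a_G$.

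The main obstacle is the localization step in (ii): one has to combine the chain condition, countable support in $\mathbb Q$, $\CH$, and $\mfb(\mathbb Q) > \aleph_1$ to capture every $\bbb$-valued name by a single $\mathbb D(\mathbb Q)_a$, and in particular to secure a strict upper bound rather than merely an upper bound in $\mathbb Q$. Once that localization is in hand, the rest of the proof is a direct unpacking of the definitions of $\leq_{\mathbb D_\Null}$ and $\leq_{\mathbb D(\mathbb Q)_a}$ together with Lemma \ref{projection}.
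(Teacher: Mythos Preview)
Your proposal establishes order-preservation and cofinality, but it omits the direction that makes the map a cofinal \emph{embedding}: you do not show that if $a$ and $b$ are $\mathbb Q$-incomparable (or $b <_{\mathbb Q} a$) then $f^a_G \not\leq^*_\Null f^b_G$. The paper explicitly proves the biconditional ``$a < b$ if and only if $f^a_G \leq^*_\Null f^b_G$'', and this is what the application to Lemma~3 of \cite{CuSh95} requires: that lemma transfers $\mfb$ and $\mfd$ across a cofinal \emph{embedding}, not merely a cofinal order-preserving map. With only what you have, you get $\mfd(\leq^*_\Null) \leq \mfd(\mathbb Q)$ and $\mfb(\leq^*_\Null) \geq \mfb(\mathbb Q)$, but not the equalities needed for Theorem~\ref{mainthmhechler}.

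The missing direction is also the most substantial part of the paper's argument, and your density-fact machinery does not handle it: when $a$ and $b$ are incomparable, neither generic can be placed in the other's side-part. The paper instead argues by contradiction: assuming some $p$ forces $\{x : f^a_G(x) \nleq^* f^b_G(x)\} \subseteq N$ for a Borel null $N$, one picks $x_\alpha \notin N \cup {\rm dom}(p(a)_0) \cup {\rm dom}(p(b)_0)$, uses $\sigma$-closure to strengthen below $a$ and below $b$ separately so as to decide the values at $x_\alpha$ of all functions in both side-parts (crucially, these strengthenings live in $\mathbb Q \hook a$ and $\mathbb Q \hook b$ and so do not touch the coordinates $a,b$ themselves), and then extends the stems at $a$ and $b$ to values $x_a, x_b$ chosen with $x_b + 1 \leq^* x_a$. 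This forces $f^b_G(x_\alpha) + 1 \leq^* f^a_G(x_\alpha)$ at a point outside $N$, contradicting the choice of $p$. Your parts (i) and (ii) are fine as far as they go, but you need to add this third piece.
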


\begin{proof}
I need to show that for $a, b \in \mathbb Q$, first of all that $a < b$ if and only if $f_G^a \leq^*_\Null f_G^b$ and second of all that for each $f \in \bbb$ there is an $a \in \mathbb Q$ so that $f \leq^*_\Null f_G^a$. First suppose that $a < b$. By Lemma \ref{projection} for any $p \in \mathbb D(\mathbb Q)$ we can find a  strengthening $q$ so that $q(b)$ forces that $f_G^a \leq^*_\Null f_G^b$ since $f_G^a$ is added at an earlier stage.

Now suppose that $a$ and $b$ are incomparable (the case where $b < a$ is symmetric to the above). Suppose for a contradiction that these is some condition $p \in \mathbb \mathbb D (\mathbb Q)$ so that $p \forces \{x \; | \; \dot{f}_G^a(\check{x}) \nleq^* \dot{f}_G^b (\check{x}) \} \subseteq \check{N}$, so in particular $p$ forces that $f_G^a \leq^*_\Null f_G^b$. By strengthening if necessary we can assume that $a, b \in {\rm dom}(p)$. Now choose $\alpha$ so that $x_\alpha$ is not in $N$, ${\rm dom} (p (a)_0)$ or ${\rm dom}(p(b)_0)$. Since all three are null sets, their union is null so there is such an $x_\alpha$. Now let $q_b \leq_{\mathbb D(\mathbb Q)_b} p \hook b$ be a strengthening so that if $\dot{\mathcal F}_b$ is the name of the side part of $p(b)$ then $q_b$ decides the check name values of all countably many elements of $\{\dot{f} (\check{x}_\alpha) \; | \; \dot{f} \in \dot{\mathcal F}_b\}$, this is possible by the fact that the forcing is $\sigma$-closed. Let $p_b$ be the condition obtained by letting $p_b(x) = p(x)$ if $x \notin {\rm dom}(q)$ and $p_b(x) = q(x)$ otherwise. Now let $q_a \leq p_b \hook a$ strengthen $p_b$ to decide the values $\{\dot{f} (\check{x}_\alpha) \; | \; \dot{f} \in \dot{\mathcal F}_a\}$ for $\dot{\mathcal F}_a$ the name of the side part of $p(a)$. Finally let $q$ be the condition which agrees with $q_a$ on its domain and agrees with $p_b$ otherwise. Note that since $a$ and $b$ are incomparable in $\mathbb Q$ neither $a$ nor $b$ is in the domains of $q_a$ or $q_b$ and so $q (a) = p(a)$ and $q(b) = p(b)$ and $q \leq p$. Now, let $x_b$ be a $\leq^*$-bound on the set $\{\dot{f} (\check{x}_\alpha) \; | \; \dot{f} \in \dot{\mathcal F}_b\}$ and let $x_a$ be such that $x_b + 1 \leq^* x_a$ and $x_a$ is a $\leq^*$-bound on the set $\{\dot{f} (\check{x}_\alpha) \; | \; \dot{f} \in \dot{\mathcal F}_a\}$. Finally let $q^*$ be the strengthening of $q$ so that the stem of $q^*(a)$ includes $(x_\alpha, x_a)$ and the stem of $q^*(b)$ includes $(x_\alpha, x_b)$. Then $q^* \forces \dot{f}_G^b (\check{x}_\alpha) + 1 \leq^* \dot{f}_G^a (\check{x}_\alpha)$, but this contradicts the choice of $p$ and $N$.

Finally to see that the mapping is cofinal, let $f \in \bbb$. By the $\aleph_2$-c.c. there is some name $\dot{f}$ so that $\dot{f}_G = f$ and some $X \subseteq \mathbb Q$ of size $\aleph_1$, and hence bounded so that there is a $b$ greater than every $x \in X$ and so that $\dot{f}$ is equivalent to a $\mathbb{D} (\mathbb Q)_b$ name. But then $f \leq^*_\Null f_G^b$ so we're done. 
\end{proof}

Putting together these lemmas, the rest of the proof of Theorem \ref{mainthmhechler} is relatively straightforward. Let me record the details below.
\begin{proof}[Proof of Theorem \ref{mainthmhechler}]
Fix $G$ as in the lemma above. By Lemma \ref{cof} in $V[G]$ there is a cofinal embedding of $\mathbb Q$ into $(\bbb, \leq^*_\Null)$ and so $\mfb(\leq^*_\Null) = \mfb (\mathbb Q)$ and $\mfd (\leq^*_\Null) = \mfd (\mathbb Q)$. By the fact that the forcing is $\sigma$-closed and has the $\aleph_2$-c.c. it follows that in $V[G]$ $\mfb (\leq^*) = \kappa$ and $\mfd (\leq^*) = \lambda$. Finally, assuming that $|\mathbb{Q}| = \lambda$, like in the example given above of $\mathbb Q = \kappa \times [\lambda]^{< \kappa}$, we can apply a nice name counting argument to also get that $2^\mfc = \lambda$. 
\end{proof}

Let me also observe that the proof of this theorem gives slightly more, in fact it gives a weakened higher dimensional version of Hechler's classical theorem on $\leq^*$, see the remark preceding Theorem 2.5 of \cite{BlassHB}.

\begin{corollary}
Assume $\GCH$ and let $\mathbb Q$ be any well-founded partial order so that $\aleph_2 < \mfb (\mathbb Q) \leq \mfd (\mathbb Q)$ with $\mfb (\mathbb Q)$ and $\mfd (\mathbb Q)$ regular. Then it's consistent that $\mathbb Q$ embeds cofinally into $(\bbb, \leq^*_\Null)$.
\end{corollary}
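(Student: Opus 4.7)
The plan is to observe that almost no work remains: the forcing $\mathbb{D}(\mathbb Q)$ and its analysis via Lemma \ref{projection} and Lemma \ref{cof} were already written for an arbitrary well-founded partial order $\mathbb Q$, with the specific choice $\kappa \times [\lambda]^{<\kappa}$ appearing only at the end of the proof of Theorem \ref{mainthmhechler} to exhibit a convenient example. Thus, given the $\mathbb Q$ from the hypothesis of the corollary, I would simply take $\mathbb D(\mathbb Q)$ as defined there, let $G \subseteq \mathbb D(\mathbb Q)$ be $V$-generic, and invoke Lemma \ref{cof} to obtain in $V[G]$ a cofinal order-embedding $a \mapsto f_G^a$ of $\mathbb Q$ into $(\bbb, \leq^*_\Null)$.

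After securing the embedding, I would verify that $\mathbb D(\mathbb Q)$ still satisfies the structural properties needed for preservation. The $\sigma$-closure argument is identical to the one in the theorem, being insensitive to $\mathbb Q$. For the $\aleph_2$-c.c., the $\Delta$-system argument thins a family of $\omega_2$ conditions to a root $B\subseteq \mathbb Q\hook a$ that is countable; the count of sequences of check-named stems indexed by $B$ is $\omega_1^\omega = \omega_1$ under $\CH$, so the argument survives verbatim regardless of $|\mathbb Q|$. Together, $\sigma$-closure and the $\aleph_2$-c.c. (combined with the assumption that $\mfb(\mathbb Q), \mfd(\mathbb Q) > \aleph_2$ are regular) ensure that every set of ordinals in $V[G]$ of size less than $\mfb(\mathbb Q)$ (resp.\ less than $\mfd(\mathbb Q)$) is covered by a ground model set of the same cardinality bound, via a nice name computation.

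Applying Lemma 5 of \cite{CuSh95} then yields $\mfb(\mathbb Q)^{V[G]} = \mfb(\mathbb Q)^{V}$ and $\mfd(\mathbb Q)^{V[G]} = \mfd(\mathbb Q)^{V}$, and Lemma 3 of \cite{CuSh95} combined with the cofinal embedding produced above delivers $\mfb((\bbb,\leq^*_\Null))^{V[G]} = \mfb(\mathbb Q)^{V[G]}$ and $\mfd((\bbb,\leq^*_\Null))^{V[G]} = \mfd(\mathbb Q)^{V[G]}$. Chaining these equalities shows that in $V[G]$ the partial order $\mathbb Q$ embeds cofinally into $(\bbb, \leq^*_\Null)$ while its own invariants $\mfb$ and $\mfd$ are preserved.

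The point that requires most care, and which I consider the main obstacle, is the $\aleph_2$-c.c.\ of $\mathbb D(\mathbb Q)$ when $|\mathbb Q|$ is large: one must check that the $\Delta$-system root $B$ really has only $\omega_1$ possible check-named stem sequences, so that among $\omega_2$ conditions sharing domains on $B$ one finds two that are compatible. This is a $\CH$ computation, independent of the ambient size of $\mathbb Q$, so no new input beyond $\GCH$ is needed; but it is the only place where the generality of $\mathbb Q$ could in principle cause trouble, and worth spelling out explicitly before concluding.
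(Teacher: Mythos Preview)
Your proposal is correct and takes essentially the same approach as the paper, which simply observes (without a separate proof) that the construction of $\mathbb D(\mathbb Q)$ and the analysis in Lemma~\ref{cof} were already carried out for an arbitrary well-founded $\mathbb Q$. Your additional verification that $\mfb(\mathbb Q)$ and $\mfd(\mathbb Q)$ are preserved goes slightly beyond what the corollary literally asserts, but is harmless and in the spirit of the surrounding discussion.
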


J. Brendle has shown that under $\CH$ $\mfb (\in^*_\Null) = \mfb (\leq^*_\Null)$ so in the models constructed in Theorems \ref{mainthmhechler} and \ref{mainthmhechler2} $\mfb(\in^*_\Null)$ increases over the iteration, see \cite[Main Theorem 1.2]{BS21}. However, these cardinals can be different, again shown in joint work with J. Brendle, in fact this happens in the Laver Model (for $\mathcal I = \Kb$ or $\mathcal I = \Me$ this happens in the Hechler model). These results, and many more appear in the sequel to the present article, \cite{BS21}. See in particular \cite[Theorem 4.8 and Theorem 4.5]{BS21} for proofs of separation of $\mfb(\in^*_\mathcal I)$ and $\mfb(\leq^*_\mathcal I)$ in the Laver and Hechler models respectively. 

\subsection{Generalizing $\mathbb{LOC}$-Forcing}
In the final subsection here I prove the consistency of having all the cardinals in the diagram arbitrarily large. The relevant forcing is a generalization of the $\mathbb{LOC}$-forcing. 

\begin{definition}
The $\Null$-$\mathbb{LOC}$ forcing, denoted $\mathbb{LOC}_\Null$, is the set of all pairs $(p, \mathcal F)$ so that $p:{\rm dom}(p) \subseteq \baire \to \mathcal S$ is a partial function with a Borel graph and a Borel domain which is measure zero and $\mathcal F \subseteq \bbb$ is countable. We let $(p, \mathcal F) \leq (q, \mathcal G)$ if and only if $p \supseteq q$, $\mathcal F \supseteq \mathcal G$ and for all $x \in {\rm dom}(p) \setminus {\rm dom} (q)$ we have that $g(x) \in^* p(x)$ for every $g \in \mathcal G$.
\end{definition}

Using the same template as with $\mathbb{D}_\Null$ it is straightforward to show the following.
\begin{lemma}
$\mathbb{LOC}_\Null$ is $\sigma$-closed, has the $\mfc^+$-c.c. and adds a function $h:\baire \to \mathcal S$ so that for every $f \in \bbb \cap V$ $f\in^*_\Null h$.
\end{lemma}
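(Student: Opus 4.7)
The proof will follow the exact same template as the proposition establishing the analogous properties of $\mathbb{D}_\Null$, with only one small wrinkle. The plan is to verify the three clauses ($\sigma$-closure, $\mfc^+$-c.c., and the genericity/localization property) in sequence.

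For $\sigma$-closure, given a descending sequence $(p_n, \mathcal F_n)_{n < \omega}$ I would set $p = \bigcup_n p_n$ and $\mathcal F = \bigcup_n \mathcal F_n$. The graph of $p$ is a countable union of Borel sets hence Borel, the domain is a countable union of Borel null sets hence a Borel null set, and $\mathcal F$ is a countable union of countable families hence countable; the compatibility condition between $p$ and each $p_n$ is inherited from the chain, so $(p, \mathcal F)$ is a condition below each $(p_n, \mathcal F_n)$. For the $\mfc^+$-c.c., note that any two conditions sharing the same stem $p$ are compatible via $(p, \mathcal F \cup \mathcal G)$, and there are only $\mfc$ many possible stems since each is a Borel subset of $\baire \times \mathcal S$.

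For the generic function, let $h = \bigcup \{p : (p, \mathcal F) \in G\}$ where $G$ is generic. A straightforward density argument shows $\mathrm{dom}(h) = \baire$: given any condition $(p, \mathcal F)$ and any $x \notin \mathrm{dom}(p)$, the countable set $\{g(x) : g \in \mathcal F\} \subseteq \baire$ can be localized by a single slalom $s \in \mathcal S$ (enumerate $\mathcal F = \{g_i : i < \omega\}$ and set $s(n) = \{g_0(n), \dots, g_{n-1}(n)\}$, so that $|s(n)| \leq n$ and $g_i(n) \in s(n)$ for all $n > i$, yielding $g_i \in^* s$). Then $(p \cup \{\langle x, s\rangle\}, \mathcal F) \leq (p, \mathcal F)$ is a legitimate extension placing $x$ in the domain. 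Finally, given any $f \in \bbb \cap V$ and any condition $(p, \mathcal F)$, the strengthening $(p, \mathcal F \cup \{f\})$ forces that for every $x \notin \mathrm{dom}(p')$ at any further extension $(p', \mathcal F') \leq (p, \mathcal F \cup \{f\})$ we have $f(x) \in^* p'(x)$. Combining this with $\sigma$-closure and a density argument shows that in the extension the set $\{x : f(x) \notin^* h(x)\}$ is contained in a Borel null set (namely the domain of some stem below $(p, \mathcal F \cup \{f\})$ in the generic), hence $f \in^*_\Null h$.

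The main (mild) obstacle is the slalom-localization step above — it is the one point where the argument for $\mathbb{LOC}_\Null$ genuinely differs from the argument for $\mathbb{D}_\Null$, where one instead used that countably many reals have a common $\leq^*$-bound. Everything else is a formal transcription of the $\mathbb{D}_\Null$ argument with $\leq^*$ replaced by $\in^*$ throughout.
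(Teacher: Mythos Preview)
Your proposal is correct and follows exactly the template the paper intends: the paper itself gives no detailed proof of this lemma, merely pointing to the $\mathbb{D}_\Null$ argument, and you have faithfully transcribed that argument with $\leq^*$ replaced by $\in^*$, correctly identifying the one new ingredient (localizing the countable set $\{g(x):g\in\mathcal F\}$ by a single slalom rather than $\leq^*$-bounding it). A minor notational slip: in your slalom construction you write $g_i(n)$ where you mean $g_i(x)(n)$, and the ``bad'' set $\{x:f(x)\notin^*h(x)\}$ is contained in $\mathrm{dom}(p)$ for the stem $p$ of the condition $(p,\mathcal F\cup\{f\})$ itself, not of a stem strictly below it --- but neither affects the validity.
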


As a result of this lemma, using the same ideas as before we get immediately.
\begin{theorem}
Let $\kappa > \aleph_2$ be a regular cardinal and let $\mathbb P_\kappa$ be the countable support iteration of $\mathbb{LOC}_\Null$. Then if $G \subseteq \mathbb P_\kappa$ is generic over $V$ in $V[G]$ we have $\mfb (\in^*_\Null) = \kappa = 2^\mfc$.
\end{theorem}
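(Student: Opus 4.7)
The plan is to mimic the structure of the analogous proofs in the previous subsections (Theorems \ref{mainthm2} and \ref{mainthmhechler2}), using the one-step lemma cited just above together with a chain condition / nice-name argument.

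First I would verify that $\mathbb P_\kappa$ preserves cofinalities and cardinalities. This uses that the countable support iteration of $\sigma$-closed forcings is $\sigma$-closed, and a standard $\Delta$-system argument under $\GCH$ gives the $\aleph_2$-c.c., exactly as for $\mathbb{D}_\Null$ in the previous subsection. A standard nice name counting argument under $\GCH$ then shows $2^\mfc = \kappa$ in $V[G]$, and in particular every function $f \in \bbb \cap V[G]$ is a $\mathbb P_\beta$-name for some $\beta < \kappa$ (by the $\aleph_2$-c.c., the countable support, and $\CH$).

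Next I would show $\mfb(\in_\Null^*) \geq \kappa$. Fix in $V[G]$ a family $A \subseteq \bbb$ of size $\delta < \kappa$; I need to produce an $\in^*_\Null$-bound. By the preceding paragraph and the regularity of $\kappa$, there is a single stage $\alpha < \kappa$ such that every $f \in A$ lies in $V[G_\alpha]$. The one-step lemma, applied in $V[G_\alpha]$ to the forcing $\mathbb{LOC}_\Null^{V[G_\alpha]}$, then tells us that the generic function $h_\alpha : \baire \to \mathcal S$ added at stage $\alpha$ satisfies $f \in^*_\Null h_\alpha$ for every $f \in \bbb \cap V[G_\alpha]$, and in particular for every $f \in A$. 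Hence $A$ is $\in^*_\Null$-bounded.

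Finally, for the upper bound $\mfb(\in^*_\Null) \leq \kappa$, I would invoke the general estimate from Section 3 (the proposition bounding $\mfb(R_\mathcal I)$ by $\mfb(R)^{\operatorname{non}(\mathcal I)}$), which gives $\mfb(\in^*_\Null) \leq \mfc^\mfc = 2^\mfc = \kappa$; alternatively one can note directly that $|\bbb| = 2^\mfc = \kappa$ and that $\bbb$ itself is $\in^*_\Null$-unbounded, since a single slalom-valued function $g : \baire \to \mathcal S$ cannot catch every $f:\baire\to\baire$ (at any fixed $x$, $g(x)$ is a single slalom, which fails to $\in^*$-capture some element of $\baire$). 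Combining the two bounds yields $\mfb(\in_\Null^*) = \kappa$. The only step that requires any care is the reflection of a $< \kappa$-sized family into an initial segment of the iteration, which is routine from the chain condition and countable support but needs to be stated explicitly; everything else is essentially a copy of the proofs given for $\mathbb C_\Null$ and $\mathbb D_\Null$.
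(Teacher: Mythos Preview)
Your proposal is correct and matches the paper's intended argument exactly: the paper does not give an explicit proof of this theorem but simply says it follows ``using the same ideas as before'' from the one-step lemma, and what you have written is precisely the filling-in of those details along the lines of the $\mathbb C_\Null$ and $\mathbb D_\Null$ proofs. The reflection of a $<\kappa$-sized family into an initial segment (via the $\aleph_2$-c.c.\ and regularity of $\kappa$) and the nice-name count for $2^\mfc=\kappa$ are exactly the ``same ideas'' the paper is alluding to.
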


\section{Conclusion and Questions}

The consistency results above barely hint at the possible constellations of the 18 cardinals considered. Many more splits between the cardinals are shown to be consistent in \cite{BS21} and yet there are still many more that remain open.
\begin{question}
How many of the above defined cardinals defined above can be simultaneously different?
\end{question}
Presumably this would involve developing analogues for well known forcing notions on the reals such as Sacks, Laver etc as I have done for Cohen, Hechler and $\mathbb{LOC}$. I leave this project for future research. 

Since writing this article, I have worked jointly on this extensively with J. Brendle and we have computed the values of these cardinals in standard models of $\neg \CH$ such as the Cohen model, the random model, the Sacks model etc. We have shown that many interesting things happen to the $\mfb (R_\mathcal I)$ cardinals. In these models the $\mfd (R_\mathcal I)$ cardinals all stay $\mfc^+$ however in two step iterations where many Cohen subsets are added to $\omega_1$ first (followed by Sacks, Laver etc) the $\mfd(R_\mathcal I)$ cardinals can be manipulated too. See \cite{BS21}.

Finally let me conclude by noting that, as mentioned in the introduction, the framework introduced is very flexible and many other generalizations are possible. For instance, while I have been working with Baire space, a similar study could easily be carried out for any other uncountable Polish space. One particularly interesting possibility, which I leave for future work, is to consider variations on $\mathfrak{a}$ where $[\omega]^\omega$ is replaced by the $\mathcal I$-positive sets of some Polish space and \say{almost disjoint} means that such sets have intersection in $\mathcal I$. A generalization in this spirit for ideals on $\omega$ has been considered in \cite{HST18}.

\section*{Acknowledgements}

This reasearch was supported by a CUNY mathematics fellowship and the author would like to thank the mathematics department at the Graduate Center at CUNY for this. Parts of this work initially appeared in the author's PhD thesis, \cite{Switzer:dissertation} written under the direction of Professors Joel David Hamkins and Gunter Fuchs. I would like to thank them both for all of their help and encouragement, in particular thanks to Professor Hamkins for listening patiently and giving thoughtful advice and enthusiastic encouragement in the early stages of this work. The author would also like to thank J\"{o}rg Brendle for many helpful discussions about this work and the anonymous referee so many helpful comments. Finally the author would like to thank the Austrian Science Fund (FWF) for the generous support through grant number Y1012-N35.

\bibliographystyle{plain}
\bibliography{Logicpaperrefs}

\begin{thebibliography}{10}

\bibitem{Bar1987}
Tomek Bartoszy\'nski.
\newblock Combinatorial aspects of measure and category.
\newblock {\em Fundamenta Mathematicae}, 127(3):225--239, 1987.

\bibitem{BarJu95}
Tomek Bartoszy\'nski and Haim Judah.
\newblock {\em Set Theory: On the Structure of the Real Line}.
\newblock A.K. Peters, Wellsley, MA, 1995.

\bibitem{BlassHB}
Andreas Blass.
\newblock Combinatorial cardinal characteristics of the continuum.
\newblock In Matthew Foreman and Akihiro Kanamori, editors, {\em Handbook of
  Set Theory}, pages 395--489. Springer, Dordrect, 2010.

\bibitem{brendle16}
J\"{o}rg Brendle, Andrew Brooke-Taylor, Sy-David Friedman, and Diana~Carolina
  Montoya.
\newblock Cicho\'{n}'s diagram for uncountable cardinals.
\newblock {\em Israel J. Math.}, 225(2):959--1010, 2018.

\bibitem{BS21}
J\"{o}rg Brendle and Corey~Bacal Switzer.
\newblock Higher dimensional cardinal characteristics for sets of functions ii.
\newblock {\em ArXiv Preprint}, Submitted:23 Pages, 2021.

\bibitem{CuSh95}
James Cummings and Saharon Shelah.
\newblock Cardinal invariants above the continuum.
\newblock {\em Ann. Pure Appl. Logic}, 75(3):251--268, 1995.

\bibitem{HST18}
Karen~Bakke Haga, David Schrittesser, and Asger T\"ornquist.
\newblock Maximal almost disjoint families, determinacy, and forcing.
\newblock {\em ArXiv Preprint}, 2018.

\bibitem{Hechler74}
Stephen~H. Hechler.
\newblock On the existence of certain cofinal subsets of {$^{\omega }\omega $}.
\newblock In {\em Axiomatic set theory ({P}roc. {S}ympos. {P}ure {M}ath.,
  {V}ol. {XIII}, {P}art {II}, {U}niv. {C}alifornia, {L}os {A}ngeles, {C}alif.,
  1967)}, pages 155--173, 1974.

\bibitem{questionsbaire}
Yurii Khomskii, Giorgio Laguzzi, Benedikt L\"{o}we, and Ilya Sharankou.
\newblock Questions on generalised {B}aire spaces.
\newblock {\em MLQ Math. Log. Q.}, 62(4-5):439--456, 2016.

\bibitem{KenST}
Kenneth Kunen.
\newblock {\em Set Theory}.
\newblock College Publications, Studies in Logic, London, 2013.

\bibitem{Miller81}
Arnold~W. Miller.
\newblock Some properties of measure and category.
\newblock {\em Trans. Amer. Math. Soc.}, 266(1):93--114, 1981.

\bibitem{Switzer:dissertation}
Corey~Bacal Switzer.
\newblock {\em Alternative Cicho\'{n} Diagrams and Forcing Axioms Compatible
  with {C}{H}}.
\newblock PhD thesis, The Graduate Center, The City University of New York,
  2020.

\end{thebibliography}

\end{document}